\definecolor{refkey}{gray}{.75}
\definecolor{labelkey}{gray}{.75}
\newcommand{\Z}{\mathbb Z}
\newcommand{\N}{\mathbb N}
\newcommand{\E}{\mathbb E}
\newcommand{\diff}{\mathrm{d}}
\newcommand{\pr}{\mathbb P}
\newcommand{\ident}{{\mathchoice {\rm 1\mskip-4mu l} {\rm 1\mskip-4mu l}
{\rm 1\mskip-4.5mu l} {\rm 1\mskip-5mu l}}}
\newcommand{\gegerm}{{\,\ge_{\textrm{germ}\,}}}
\newcommand{\gepgf}{{\,\ge_{\textrm{pgf}\,}}}
\newcommand{\boldmu}{{\bm{\mu}}}
\newcommand{\boldnu}{{\bm{\nu}}}
\newtheorem{teo}{Theorem}[section]
\newtheorem{lem}[teo]{Lemma}
\newtheorem{rem}[teo]{Remark}
\newtheorem{pro}[teo]{Proposition}
\newtheorem{defn}[teo]{Definition}
\newtheorem{exmp}[teo]{Example}
\newtheorem{assump}[teo]{Assumption}
\title
{Results on branching random walks and rumor processes via germ order}
\author[D.~Bertacchi]{Daniela Bertacchi}
\address{D.~Bertacchi, Dipartimento di Matematica e Applicazioni,
Universit\`a di Milano--Bicocca,
via Cozzi 53, 20125 Milano, Italy.}
\email{daniela.bertacchi\@@unimib.it}
\author[F.~Zucca]{Fabio Zucca}
\address{F.~Zucca, Dipartimento di Matematica,
Politecnico di Milano,
Piazza Leonardo da Vinci 32, 20133 Milano, Italy.}
\email{fabio.zucca\@@polimi.it}
\date{}
\begin{document}

\begin{abstract}
Germ order is a non-standard stochastic order defined through the comparison of the generating functions of the processes. This order was first introduced for branching random walks with a constant breeding law and independent dispersal of offspring, which are characterized by a one-dimensional generating function. In this work, we investigate the properties of the extension of this concept to processes characterized by a multidimensional generating function, such as general branching random walks and rumor processes. In particular, we use germ ordering to characterize the behavior of certain branching random walks and rumor processes with inhomogeneous breeding/transmitting laws.
\end{abstract}

\maketitle
\noindent {\bf Keywords}: branching random walk, generating function, fixed point, extinction probability vectors, germ order, pgf order, strong survival.

\noindent {\bf AMS subject classification}: 60J80.

\section{Introduction}
\label{sec:intro}

Germ order was introduced in \cite{cf:Hut2022} as a new stochastic order for branching random walks (BRWs).
The author considered branching random walks in which particles have the same offspring distribution, denoted by $\mu$, at every site, and their offspring are independently dispersed according to a transition matrix $P$.
If $P$ is fixed, the behavior of the BRW depends only on $\mu$. Two processes are compared using the probability
generating function  of their breeding law, in a neighborhood of $z=1$, namely $G_\mu(z)=\sum_n z^n\mu(n)$.
More precisely,  we say that $\mu\gegerm\nu$ when $G_\mu(z)\le G_\nu(z)$, for all $z\in [\delta,1]$.
It was proved in  \cite{cf:Hut2022} that if $\mu\gegerm\nu$, then the extinction of the $\mu$-process (in a set or in the whole space) implies 
the extinction of the $\nu$-process.
An interesting fact is that $\mu\gegerm\nu$ can hold  even if it is not possible to construct a coupling such that the $\mu$-process always and everywhere has at least as many particles as the $\nu$-process. In other words, the two processes may be germ-comparable, but not comparable according to the classical stochastic order.

The notion of germ order has been extended in \cite{cf:BZgerm} to general BRWs, that is, processes where the breeding mechanism is given by choosing not only the number of children, but also their locations, with a possibly non-independent choice
(see Definition~\ref{def:ordering}). This generalization includes the case studied in \cite{cf:Hut2022}, which we refer to as the independently diffusing and homogeneous case, as well as independently diffusing nonhomogeneous BRWs (where children are dispersed independently according to a transition matrix $P$, and the distribution of the total number of offspring depends on the site).
We emphasize that, in the general case, the total distribution at each site (i.e., the law governing the number of children generated), although still relevant, no longer contains all the information required to compare pairs of processes.

We note that, while for independently diffusing and homogeneous BRWs, one can just consider the one-dimensional generating function of the total distribution, in the general case we deal with a multidimensional generating function. Given two processes, the fact that the total distribution of the first, at each site, is germ-larger than the total distribution of the second, is necessary but not sufficient for germ domination between the two BRWs 
(see Theorem~\ref{pro:germindepdiff}). Also for the general case, if $\boldmu\gegerm\boldnu$, then the extinction  of the $\boldmu$-process implies the extinction of the $\boldnu$-process (see Theorem~\ref{th:germorder} for a more complete statement).

Moreover, we observe that germ order can be considered for all processes where there is a family of measures $\boldmu=\{\mu_y\}_{y \in Y}$, indexed by a 
set $Y$, with $\mu_y$ defined on the simultaneous choice of the total number of offspring and their types, in a set $X$ (see Section~\ref{sec:basic}
for details). This includes rumor processes where the rumor is transmitted on a graph thanks to random numbers and types of spreaders or listeners.

The aim of this paper is to investigate properties of germ order in this general framework, how it can be applied to compare different processes, and how
this comparison can help to overcome difficulties related to inhomogeneity of the environment.
Indeed, in a homogeneous setting, it is not difficult to characterize survival/extinction of the BRW: the total number of particles in the graph in 
this case is described by a branching process (there is extinction if and only if the expected number of offspring is smaller or equal to 1).
As soon as there are inhomogeneities (i.e., breeding mechanisms that depend on the site), the characterization of extinction becomes nontrivial and, in general, remains an open question. Even seemingly natural conjectures can be disproven. For instance, even if the average number of children at each site is greater than $1 + \varepsilon$ for some $\varepsilon > 0$, it is not always true that the BRW survives
(see \cite[Example 2.10]{cf:BRZ16}). 
The statement is true when there is some regularity. An example of this regularity is the quasi-transitive case
(see \cite[Section 2.4]{cf:BZ14-SLS} or \cite[Section 4.1]{cf:Z1} for details). A wider class of BRWs, which are sufficiently regular, so that we can characterize extinction/survival, are $\mathcal{F}$-BRWs, which roughly speaking have a finite number of types, ``nicely" dispersed on the graph
(see \cite{cf:BZ2} for the formal definition and Proposition~\ref{prop:fbrwgen} for a characterization).

When trying to establish whether a BRW $\boldnu$ goes extinct, one can compare it to another BRW $\boldmu$, whose behavior is already known.
In this sense, a classical tool is the construction of a coupling such that almost surely, at any time and everywhere, the $\boldnu$-process has
fewer particles than the $\boldmu$-process.
Then the extinction of the $\boldmu$-process implies the extinction of the $\boldnu$-process.
The construction of such a coupling is not always possible, but when $\boldmu\gegerm\boldnu$ (which is a weaker condition), we can
gather that the extinction of the $\boldmu$-process implies the extinction of the $\boldnu$-process (see Example~\ref{ex:fbrw}).

Thus germ order is a more powerful tool, with respect to the standard stochastic order, to study extinction/survival, through a comparison between
processes whose behavior is known and other whose behavior is unknown. It is crucial to understand how we can make use of this tool to compare 
homogeneous and inhomogeneous processes, and/or independently diffusing and non-independently diffusing processes.
In this work we show how one can use this technique in BRWs, in branching processes in varying environment (BPVEs) and in rumor processes.

Here is the outline of the paper. In Section~\ref{sec:basic} we give the definition of the multidimensional generating function of a general process and recall the notions of germ and probability generating function (briefly, pgf) orders. 
In Section~\ref{sec:germ} we study some properties of germ order. In particular, we give sufficient conditions for germ domination in the independently diffusing case (Theorems~\ref{pro:germindepdiff} and~\ref{pro:betterconditions}) and we compare processes with Poisson and mixed Poisson total distribution. We show that there are examples where the germ ordering is in one direction or the other, even if there is no classical coupling and if there is no ordering between the support of the parameters of the mixed case and the parameters of the classical Poisson (Examples~\ref{exmp:mix} and \ref{exmp:mix2}). 
We also address the question whether independent diffusion makes the process always germ-smaller or germ-larger than the non-independent case. 
The answer to both questions is negative: for example, placing all the offspring at a single, randomly chosen site results in a smaller process, while distributing the offspring evenly across the possible sites leads to a larger process (see Examples~\ref{exmp:nonind2} and \ref{exmp:nonind1}). This suggests that, in order to survive, the population should aim to spread out evenly. Note that the question of how the dispersal strategy affects population survival has been addressed in a slightly different model (where random catastrophes are present) in 
\cite{cf:disp3, cf:disp1, cf:disp2,  cf:dispSchi}.

In Section~\ref{sec:BRW} we give the definition of BRW and of survival and extinction of the process. We recall the notion of
$\mathcal F$-BRWs, which are a general family of (possibly inhomogeneous) BRWs, for which a characterization of global survival is known. 
We state the main result, which allows to use germ coupling 
to understand whether a BRW survives or goes extinct (Theorem~\ref{th:germorder}). This statement proves to be particularly useful when 
the process is inhomogeneous and a classic stochastic coupling is not available, as shown in Examples~\ref{ex:fbrw} and \ref{ex:fbrw2}.

Section~\ref{sec:BPVE} is devoted to a particular family of BRWs on $\N$, branching processes in varying environment, or BPVE.
These processes can be seen as branching processes, where reproduction laws depend on the generation. We show, in Example~\ref{exmp:BPVE0},
a BPVE for which
the expected number of offspring in each generation is smaller than one, and the process survives with positive probability.
We use germ order to compare a general BPVE with this example and give a simple sufficient condition for survival (Example~\ref{exmp:BPVE1}).
This condition is generalized in Proposition~\ref{pro:BPVE1}. We also use germ order to prove Theorem~\ref{pro:bertrodzuc}, which is similar to \cite[Theorem 2.6]{cf:BRZ16}, but additionally applies to processes with breeding laws that do not necessarily have all finite moments.

In Section~\ref{sec:firework} we study rumor processes on $\N$, namely firework and reverse firework processes. 
These were first introduced and studied on $\N$
in \cite{cf:JMZ} and later, on $\N$ and on Galton-Watson trees, in \cite{cf:BZ13}. In both processes, each site of a graph contains a random number of stations: emitters in the case of the firework process and listeners in the case of the reverse firework process. Each station has a random radius, representing the maximum distance its signal can reach in the first process, and the maximum distance from which it can receive signals in the second process. The question is whether the process dies out (i.e. almost surely
only a finite number of vertices are informed) or it survives (i.e. with positive probability infinitely many vertices are informed).
In Section~\ref{subsec:mainrumor} we generalize the results of \cite{cf:BZ13} to the multi-type case, in which at each site there might
be different types of stations, each equipped with its own radius law. In this general case, the number of stations at each site is prescribed by
a family of measures $\boldmu$.
We provide characterizations and/or sufficient conditions for
survival and extinction (Theorems~\ref{thm:2}, \ref{th:reversehomogeneous}, \ref{thm:3}, \ref{th:reverseinhomogeneous}).
We prove that, if $\boldmu\gegerm\boldnu$, when the rumor process associated with $\boldnu$ survives with positive probability, so does the one associated with $\boldmu$ (Theorems~\ref{pro:comparehomfirework}, \ref{pro:comparereverse}).
We conclude the section with three examples which show counterintuitive behavior. In Example~\ref{exmp:fireworkmixedpoisson1} a firework process
where the number of stations is a mixed Poisson, we show that, even if there are arbitrarily long stretches of sites where the number is subcritical,
nevertheless we have survival. In Example~\ref{ex:nonid} we have a rumor process with two types of emitters. 
The number of emitters is
Poisson distributed. The two types differ in the radius law, one of which is subcritical and the other is supercritical.
If the type is randomly assigned to be the same for all the emitters at a site, then the process dies out (even if there are arbitrarily long stretches 
with emitters of supercritical type). 
If the type is assigned randomly to at most one emitter, while the others are half of one type and half of the other, then the process survives
(even if there are arbitrarily long stretches with emitters of subcritical type). 
Example~\ref{ex:nonidMP} is similar, with a mixed Poisson instead of the Poisson.

Section~\ref{sec:proofs} contains the proofs of the statements of the previous sections.

\section{Basic definitions and properties}
\label{sec:basic}

\subsection{Generating functions}\label{subsec:process}

Given an at most countable set $X$ and a set $Y$,
we consider a family of measures $\boldmu=\{\mu_y\}_{y \in Y}$
defined on the (countable) measurable space $(S_X,2^{S_X})$
where $S_X:=\{f:X \to \N\colon |f|<+\infty\}$ where $|f|:=\sum_{x\in X}f(x)$. 

One may think of $Y$ as the space set and of $X$ as the type set: at site $y$ we pick a function $f$ with distribution $\mu_y$, 
and we generate $f(x)$ individuals of type $x$, for all $x\in X$. 
Although this is not the only possible interpretation, we stick to
this wording for most of the paper.

To the family $\boldmu$
we associate the following \textit{generating function} $G_\boldmu:[0,1]^X \to [0,1]^Y$,
\begin{equation}\label{eq:genfun}
G_\boldmu({\mathbf{z}}|y):= \sum_{f \in S_X} \mu_y(f) \prod_{x \in X} {\mathbf{z}}(x)^{f(x)}, 
\end{equation}
where $G_\boldmu({\mathbf{z}}|y)$ is the $y$ coordinate of $G_\boldmu({\mathbf{z}})$.
The family $\boldmu$
is uniquely determined by $G_\boldmu$ (see for instance \cite[Section 2.3]{cf:BZ14-SLS} or \cite[Section 2.2]{cf:BZ2017} and \cite[proof of Proposition 2.1]{cf:BZgerm}.
Henceforth, when possible, we write $G$ instead of $G_\boldmu$.
We denote by  
\[
\phi_y(t) := G_\boldmu(t\mathbf{1}|y)=\sum_{f \in S_X} \mu_y(f) t^{|f|},
\]
where 
$\mathbf{1}\in [0,1]^X$ 
is such that $\mathbf{1}(x)=1$ for all $x\in X$.
Similarly, we denote by $\mathbf{0}\in[0,1]^X$ the vector $\mathbf{0}(x)=0$ for all $x\in X$.
Note that, if
\begin{equation}\label{eq:total}
    \rho_y(n):= \mu_y(f\colon |f|=n),
\end{equation}
then $\phi_y$ is the one-dimensional generating function of $\rho_y$.
We refer to $\{\rho_y\}_{y \in Y}$ as the \textit{total} distribution of $\boldmu$ (or
\textit{total offspring} distribution).
If we need to stress the dependence on $\boldmu$, we write $\rho_y^\boldmu$.
Note that, if $T\sim\rho_y$, then $\E[T]=\frac{\diff}{\diff t}\phi_y(1)$.
The topological properties of $G_\boldmu$ are described in the following proposition; in particular, we define $\|\mathbf{z}\|_\infty:=\sup_{x \in C} |\mathbf{z}(x)|$ the restriction of the norm of $l^\infty(C)$ to $[0,1]^C$ (for $C\in\{X,Y\}$). The (partially ordered) spaces $[0,1]^X$ and $[0,1]^Y$ can be equipped with two useful topologies: the product (or \textit{pointwise convergence}) topology  and the finer topology arising from the metric $d(\mathbf{z}, \mathbf{v}):=\|\mathbf{z}- \mathbf{v}\|_\infty$ .

\begin{pro}\label{pro:Gtopology}
Let us consider the generating function $G_\boldmu$ defined by eq.~\eqref{eq:genfun}.
\begin{enumerate}
\item $G$ is 
non-decreasing
with respect to the usual partial order of $[0,1]^X$ and $[0,1]^Y$.
\item $G$ is continuous with respect to the \textit{pointwise convergence topology} of $[0,1]^X$ and $[0,1]^Y$.
\item If the family $\{\rho_y\}_{y \in Y}$ is tight then $G$ is uniformly continuous with respect to the $\|\cdot\|_\infty$-topologies of  $[0,1]^X$ and $[0,1]^Y$.
\end{enumerate}
\end{pro}

Given a family $\{\rho_y\}_{y \in Y}$ of measures on $\mathbb{N}$ and a non-negative stochastic matrix $P=(p(y,x))_{y \in Y, x \in X}$ (clearly $\sum_{x \in X} p(y,x)=1$ for all $y \in Y$), one can construct a particular family $\boldmu$, with total offspring distribution $\{\rho_y\}_{y \in Y}$.

We say that $\boldmu$ is an \textit{independently diffusing} family of measures if
\begin{equation}\label{eq:particular1}
	\mu_y(f)=\rho_y 
        \left (|f|\right )\frac{(|f|)!}{\prod_{x \in X} f(x)!}
     \prod_{x \in X} p(y,x)^{f(x)},
	\quad \forall f \in S_X.
\end{equation}
According to this family $\boldmu$, at $y$ we generate $n$ individuals with probability $\rho_y(n)$ and we independently assign to each one, a type 
$x$ with probability $p(y,x)$.
It is easy to prove that for an independently diffusing family $\boldmu$, we have that
\begin{equation}\label{eq:Gindepdiff}
 G(\mathbf{z}|y)=\phi_y (P\mathbf{z}(y)), \quad \forall y \in Y, \, \mathbf{z} \in [0,1]^X,
\end{equation}
where 
$P\mathbf{z}(y)=\sum_{x \in X} p(y,x)\mathbf{z}(x)$ (explicit computations can be found in \cite{cf:BZgerm}).

Particular choices of the sets $X$ and $Y$, and of the family $\boldmu$ give rise to known processes.
In the case of branching random walks (or multi-type branching processes), $X=Y$ represents space where particles live
(or the type set), and when a particle breeds, it does so according to a law depending on its location $y$, which picks the function $f$, and places $f(x)$ children at site $x$, for all $x\in X$
(or assigns type $x$ to $f(x)$ children).

In the firework (or reverse firework) processes, $Y$ is the space where stations are placed, and $X$ is the set of the possible different types of
stations. The configuration of the station is random, in that at site $y$ there are $f(x)$ stations of type $x$, for all $x\in X$, with probability 
$\mu_y(f)$.

\subsection{Germ order}\label{sec:germ-def}
By a comparison between generating functions, we introduce the pgf and the germ order relations on the families of measures on $\mathbb{R}^X$ with support on $S_X$, indexed by $Y$.
The definition of germ order generalizes the one given in \cite{cf:Hut2022} and has been originally studied in \cite{cf:BZgerm}.
 
\begin{defn}\label{def:ordering}
 Let $\boldmu:=\{\mu_y\}_{y \in Y}$ and $\boldnu:=\{\nu_y\}_{y \in Y}$ be two families of measures on $\mathbb{R}^X$ with support on $S_X$.
 Let $G_{\boldmu}$ and $G_\boldnu$ be the corresponding generating functions.
 \begin{enumerate}
  \item $\boldmu \gepgf \boldnu$ if and only if
  $G_\boldmu(\mathbf{z}) \le G_\boldnu(\mathbf{z})$ for all $\mathbf{z} \in [0,1]^X$.
  \item $\boldmu \gegerm \boldnu$ if and only if there exists $\delta \in [0,1)$
  $G_\boldmu(\mathbf{z}) \le G_\boldnu(\mathbf{z})$ for all $\mathbf{z} \in [\delta,1]^X$.
 \end{enumerate}
 When, given two single measures $\mu$ and $\nu$, we write $\mu \gepgf \nu$ (resp.~$\mu \gegerm \nu$), we mean $\{\mu\} \gepgf  \{\nu\}$ (resp.~$\{\mu\}\gegerm \{\nu\}$).
 \end{defn}
 
 We observe that $\boldmu \gepgf \boldnu \ \Longrightarrow \ \boldmu \gegerm \boldnu$ (just take $\delta=0$), but the reverse implication does not hold. 
Clearly $G_\boldmu(\mathbf{z}) \le G_\boldnu(\mathbf{z})$ if and only if $G_\boldmu(\mathbf{z}|y) \le G_\boldnu(\mathbf{z}|y)$ for all $y \in Y$; thus,
$\boldmu \gegerm \boldnu$ (with a certain $\delta <1$) if and only if $\mu_y \gegerm \nu_y$ for all $y \in Y$ 
(with the same $\delta <1$). Similarly, 
$\boldmu \gepgf \boldnu$ if and only if $\mu_y \gepgf \nu_y$ for all $y \in Y$. 
 The binary relation $\gegerm$ is a partial order as proven in \cite[Proposition 2.1]{cf:BZgerm}.

\section{Results on germ order}\label{sec:germ}

The following result links germ order (and pgf order, when $\delta=0$) with the order of the corresponding families $\phi_y$.
It appeared as Proposition 2.3 in \cite{cf:BZgerm}, we recall it here for completeness.

\begin{teo}\label{pro:germindepdiff}
	Suppose that $\boldmu$ and $\boldnu$ are two families of measures. 
		Consider the following for any fixed $\delta<1$:
	\begin{enumerate}
		\item $G_\boldmu(\mathbf{z}) \le G_\boldnu(\mathbf{z})$ for all $\mathbf{z} \in [\delta,1]^X$;
		\item $\phi_y^\boldmu(t) \le \phi_y^\boldnu(t)$ for all $t \in [\delta,1]$ and all $y \in Y$.
	\end{enumerate}
	Then $(1) \Rightarrow (2)$. Moreover if $\boldmu$ and $\boldnu$ are independently diffusing families with the same matrix $P$ (see Equations~\eqref{eq:particular1} and \eqref{eq:Gindepdiff}), 
	then $(2) \Rightarrow (1)$.	
\end{teo}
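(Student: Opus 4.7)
The plan is to exploit two simple facts: by definition $\phi_y(t) = G(t\mathbf{1}|y)$, so the one-dimensional objects $\phi_y$ are recovered by restricting the multidimensional generating function to the diagonal of $[0,1]^X$; and in the independently diffusing case the identity~\eqref{eq:Gindepdiff} expresses $G(\mathbf{z}|y)$ as $\phi_y$ evaluated at a convex combination of the coordinates of $\mathbf{z}$. Both implications will follow by a direct substitution, once one checks that the relevant arguments stay inside the germ neighborhood.

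For $(1) \Rightarrow (2)$, I would fix $t \in [\delta,1]$ and note that the constant vector $t\mathbf{1}$ belongs to $[\delta,1]^X$. Applying (1) at $\mathbf{z}=t\mathbf{1}$ and reading off the $y$-coordinate yields
\begin{equation*}
\phi_y^{\boldmu}(t) = G_{\boldmu}(t\mathbf{1}|y) \le G_{\boldnu}(t\mathbf{1}|y) = \phi_y^{\boldnu}(t),
\end{equation*}
and since $y\in Y$ and $t\in[\delta,1]$ are arbitrary, this is precisely (2). No structural assumption on $\boldmu$ or $\boldnu$ is used in this direction.

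For the converse, assume that $\boldmu$ and $\boldnu$ are independently diffusing with the same stochastic matrix $P$. Fix $\mathbf{z}\in[\delta,1]^X$ and $y\in Y$, and set $t := P\mathbf{z}(y) = \sum_{x\in X} p(y,x)\mathbf{z}(x)$. Because $P$ is stochastic and every coordinate $\mathbf{z}(x)$ lies in $[\delta,1]$, the quantity $t$ is a convex combination of elements of $[\delta,1]$, hence $t\in[\delta,1]$. Using~\eqref{eq:Gindepdiff} for both families with the common matrix $P$, together with the hypothesis (2) applied at this $t$, I obtain
\begin{equation*}
G_{\boldmu}(\mathbf{z}|y) = \phi_y^{\boldmu}(t) \le \phi_y^{\boldnu}(t) = G_{\boldnu}(\mathbf{z}|y),
\end{equation*}
which yields (1).

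There is no serious obstacle: the whole argument is a change of variable inside the generating function. The only points that require a moment of care are verifying that the diagonal $t\mathbf{1}$ lies in $[\delta,1]^X$ (immediate) and that a stochastic matrix preserves the interval $[\delta,1]$ (convex combinations). The role of the shared matrix $P$ in the converse is essential: without it, $\mathbf{z}$ would be mapped to two different arguments of $\phi_y^{\boldmu}$ and $\phi_y^{\boldnu}$, and the one-dimensional comparison (2) would not suffice to conclude (1).
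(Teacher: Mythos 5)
Your proof is correct and follows essentially the same route as the paper: restricting to the diagonal $t\mathbf{1}$ for $(1)\Rightarrow(2)$, and for the converse using \eqref{eq:Gindepdiff} together with the observation that the stochastic matrix $P$ maps $[\delta,1]^X$ into $[\delta,1]^Y$. No gaps.
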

Roughly speaking, condition $\rho^\boldmu_y \gegerm \rho_y^\boldnu$ for all $y \in Y$ with a fixed $\delta<1$ 
is always necessary for $\boldmu \gegerm \boldnu$, and is equivalent to $\boldmu \gegerm \boldnu$, when 
$\boldmu$ and $\boldnu$ are independent diffusing families with the same matrix $P$.
One has to be careful since condition (2) in Theorem~\ref{pro:germindepdiff} must hold for the same $\delta$ for all $y\in Y$.
This leads to  two relevant particular cases, when $\boldmu$ are independent diffusing families with the same matrix $P$.
On the one hand, $\boldmu \gepgf \boldnu$ if and only if 
$\rho^\boldmu_y \gepgf \rho_y^\boldnu$ for all $y \in Y$.
On the other hand, if the total distributions do not depend on $y$, that is, $\rho^\boldmu_y=\rho^\boldmu$ and $\rho^\boldnu_y=\rho^\boldnu$,
then $\phi^\boldmu(t)\le\phi^\boldnu(t)$ for all $t\in [\delta,1]$, for some $\delta\in[0,1)$, is equivalent to $\boldmu\gegerm \boldnu$.
This is the case treated in \cite{cf:Hut2022}, where criteria based on the moments of
$\rho^\boldmu$ and $\rho^\boldnu$ were stated. For instance, it suffices that $\E[T_\boldmu]>\E[T_\boldnu]$, given two random variables $T_\boldmu\sim \rho^\boldmu$ and
$T_\boldnu\sim \rho^\boldnu$. 
When the total distributions depend on $y$, it is not straightforward to find simple criteria based on the moments of
$\rho^\boldmu_y$ and $\rho^\boldnu_y$.

\subsection{Sufficient conditions for germ order of independently diffusing families}

We want to provide sufficient conditions for $\boldmu \gegerm \boldnu$, in the case where the two families are independently diffusing, with the same matrix $P$. 
Note that if $\phi_y^\boldmu(0)=\rho^\boldmu_y(0)>\rho^\boldnu_y(0)=\phi_y^\boldnu(0)$, then $\boldmu\not\!\!\gepgf\boldnu$ (apply Theorem~\ref{pro:germindepdiff} with $\delta=0$).

\begin{teo}\label{pro:germidextended}
	Suppose that $\boldmu$ and $\boldnu$ are two families of independently diffusing measures, with the same matrix $P$. 
	Let $U_y$ and $W_y$ be distributed as the total distributions of $\mu_y$ and $\nu_y$ respectively.
	\begin{enumerate}
		\item
		If $\E[U^k_y] \in (\E[W^k_y], +\infty]$, then there exists $\delta=\delta(y)>0$ such that  $\E[U^k_y \exp(-t U_y)]\ge\E[W_y]$, for all $t\in [0,\delta]$.
		\item
		Suppose that $\E[U_y^i]=\E[W_y^i]$ for all $i =0, \ldots, k$ (for some $k \in \mathbb{N}$) and there exists $ \delta>0$ such that 
		\begin{equation}\label{eq:condvarphi}
		  \begin{cases}
			\E[U_y^{k+1} \exp(-\delta U_y)]\ge \E[W_y^{k+1}] & \text{ if $k$ is even}\\
			\E[W_y^{k+1} \exp(-\delta W_y)]\ge \E[U_y^{k+1}] & \text{ if $k$ is odd}\\
		\end{cases}  
		\end{equation}
				for all $y\in Y$.
		Then $\boldmu\gegerm\boldnu$.
	\end{enumerate}
\end{teo}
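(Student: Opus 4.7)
The plan is to invoke Theorem~\ref{pro:germindepdiff}: since both families are independently diffusing with the same matrix $P$, it is enough to produce $\delta'<1$ (independent of $y$) such that $\phi_y^\boldmu(t)\le \phi_y^\boldnu(t)$ for $t\in[\delta',1]$ and every $y\in Y$. I would reparametrize by $t=e^{-s}$, so that
\[
h_y(s):=\phi_y^\boldmu(e^{-s})=\E[e^{-sU_y}],\qquad g_y(s):=\phi_y^\boldnu(e^{-s})=\E[e^{-sW_y}],
\]
and the goal becomes $g_y(s)\ge h_y(s)$ for $s\in[0,\delta'']$ with some $\delta''$ valid uniformly in $y$.

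Part (1) (reading the conclusion $\E[W_y]$ as the evident typo for $\E[W_y^k]$) is essentially continuity: for every $t>0$ the map $u\mapsto u^ke^{-tu}$ is bounded in $u\ge 0$, hence $\E[U_y^k e^{-tU_y}]<+\infty$; moreover $t\mapsto \E[U_y^k e^{-tU_y}]$ is non-increasing and, by monotone convergence, tends to $\E[U_y^k]$ as $t\downarrow 0$ (in the extended sense when that moment is infinite). Since $\E[U_y^k]>\E[W_y^k]$, a suitable $\delta=\delta(y)>0$ exists.

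For part (2) the main step is Taylor's theorem with integral remainder applied to $g_y-h_y$. The functions $h_y$ and $g_y$ are $C^\infty$ on $(0,\infty)$ with $h_y^{(j)}(s)=(-1)^j\E[U_y^j e^{-sU_y}]$ (analogously for $g_y$), and admit Taylor expansions of order $k$ at $0$ because the first $k$ moments are finite; the assumed equalities $\E[U_y^j]=\E[W_y^j]$ for $j\le k$ cause the polynomial parts to cancel on subtraction. By repeated integration (or by applying Taylor's theorem with integral remainder on $[\varepsilon,s]$ and letting $\varepsilon\downarrow 0$) one obtains
\[
g_y(s)-h_y(s)=\frac{(-1)^{k+1}}{k!}\int_0^s (s-u)^k\bigl(\E[W_y^{k+1}e^{-uW_y}]-\E[U_y^{k+1}e^{-uU_y}]\bigr)\,du,
\]
where the integrability of $u\mapsto\E[Z^{k+1}e^{-uZ}]$ on $(0,s)$ follows from the Fubini identity $\int_0^s\E[Z^{k+1}e^{-uZ}]\,du=\E[Z^k(1-e^{-sZ})]\le\E[Z^k]<+\infty$ for $Z\in\{U_y,W_y\}$.

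One then performs a sign analysis using two elementary facts: $u\mapsto\E[Z^{k+1}e^{-uZ}]$ is non-increasing, and is bounded above by $\E[Z^{k+1}]$ (in the extended sense). If $k$ is even, $(-1)^{k+1}=-1$, and the first line of \eqref{eq:condvarphi} gives, for $u\in[0,\delta]$,
\[
\E[U_y^{k+1}e^{-uU_y}]\ge \E[U_y^{k+1}e^{-\delta U_y}]\ge \E[W_y^{k+1}]\ge \E[W_y^{k+1}e^{-uW_y}],
\]
so the integrand is non-positive and the remainder non-negative; if $k$ is odd, the symmetric chain based on the second line of \eqref{eq:condvarphi} reverses the inequalities, while $(-1)^{k+1}=+1$ again yields a non-negative remainder. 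Hence $g_y(s)\ge h_y(s)$ for $s\in[0,\delta]$ uniformly in $y$, so $\phi_y^\boldmu\le\phi_y^\boldnu$ on $[e^{-\delta},1]$ for all $y\in Y$, and Theorem~\ref{pro:germindepdiff} delivers $\boldmu\gegerm\boldnu$. The only delicate point I anticipate is the justification of the integral remainder formula when $\E[U_y^{k+1}]$ or $\E[W_y^{k+1}]$ is infinite, which is dealt with by the $\varepsilon$-truncation mentioned above combined with continuity of $h_y,g_y$ and of their derivatives up to order $k$ at the origin.
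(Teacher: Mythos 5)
Your proposal is correct and follows essentially the same route as the paper's proof: reparametrize via $t=e^{-s}$, differentiate $\E[e^{-sU_y}]-\E[e^{-sW_y}]$ up to order $k+1$, use the moment equalities to kill the polynomial part of the Taylor expansion at $0$, and run the same monotonicity chain on the $(k+1)$-st derivative over $[0,\delta]$ before invoking Theorem~\ref{pro:germindepdiff}. The only (immaterial) difference is that you use the integral form of the remainder with an $\varepsilon$-truncation, whereas the paper uses the Lagrange form evaluated at some interior point $s\in(0,t)$.
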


In Theorem~\ref{pro:germidextended}	we compare the functions $\phi_y^\boldmu(\exp(-t))$ and $\phi_y^\boldnu(\exp(-t))$ instead of the generating functions $\phi_y^\boldmu(t)$ and $\phi_y^\boldnu(t)$. Indeed, it is trivial to note that there exists $\delta_1<1$ such that $\phi_y^\boldmu(t) \le \phi_y^\boldnu(t)$ for all $t \in [\delta_1,1]$ if and only if there exists $\delta>0$ such that $\phi_y^\boldmu(\exp(-t)) \le \phi_y^\boldnu(\exp(-t))$ for all $t \in [0,\delta]$. A similar 
condition for $\boldmu\gegerm\boldnu$ is given by the following proposition.

\begin{teo}\label{pro:betterconditions}
  Let $\boldmu$ and $\boldnu$ be two families of independently diffusing measures, with the same matrix $P$. 
	Let $U_y$ and $W_y$ be distributed as the total distributions of $\mu_y$ and $\nu_y$ respectively.
  Suppose that $\E[\prod_{j=0}^{i-1} (U_y-j)]=\E[\prod_{j=0}^{i-1} (W_y-j)]$ for all $i =1, \ldots, k$ (for some $k \in \mathbb{N}$, if $k=0$ the previous condition is trivial) and there exists $ \delta_1>0$ such that 
\begin{equation}\label{eq:condphi}
   \begin{cases}
	\E[\delta_1^{U_y-k-1}\prod_{j=0}^{k} (U_y-j)]\ge \E[\prod_{j=0}^{k} (W_y-j)] & \text{ if $k$ is even}\\
	\E[\delta_1^{W_y-k-1}\prod_{j=0}^{k} (W_y-j) ]\ge \E[\prod_{j=0}^{k} (U_y-j)] & \text{ if $k$ is odd}\\
\end{cases} 
\end{equation}
for all $y\in Y$. Then $\boldmu\gegerm\boldnu$.
\end{teo}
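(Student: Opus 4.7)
The plan is to reduce to a one-dimensional comparison of the $\phi_y$'s via Theorem~\ref{pro:germindepdiff}, and then obtain this comparison from a Taylor expansion around $t=1$. Since $\boldmu$ and $\boldnu$ are independently diffusing with the same $P$, Theorem~\ref{pro:germindepdiff} tells us that $\boldmu\gegerm\boldnu$ is equivalent to the existence of a single $\delta<1$ such that $\phi_y^\boldmu(t)\le \phi_y^\boldnu(t)$ for all $y\in Y$ and all $t\in[\delta,1]$. So my entire task is to produce such a $\delta$ from the hypotheses, and the natural candidate is the $\delta_1$ given in the statement.

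The key observation is that the factorial moments are exactly the derivatives of $\phi_y$ at $1$: for each integer-valued $N\ge 0$ with generating function $\phi(t)=\E[t^N]$,
\[
\phi^{(i)}(t)=\E\Bigl[\,t^{N-i}\prod_{j=0}^{i-1}(N-j)\,\Bigr],\qquad t\in[0,1),
\]
and by monotone convergence this extends to $t=1$ in $[0,+\infty]$. Hence the equalities $\E[\prod_{j=0}^{i-1}(U_y-j)]=\E[\prod_{j=0}^{i-1}(W_y-j)]$ for $i=1,\dots,k$ translate into $(\phi_y^\boldmu)^{(i)}(1)=(\phi_y^\boldnu)^{(i)}(1)$ for $i=0,1,\dots,k$, all of which are finite. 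I would then apply Taylor's formula with integral remainder on $[t,1]\subset[\delta_1,1]$ to get
\[
\phi_y^\boldnu(t)-\phi_y^\boldmu(t)=\frac{1}{k!}\int_{1}^{t}(t-s)^{k}\Bigl[(\phi_y^\boldnu)^{(k+1)}(s)-(\phi_y^\boldmu)^{(k+1)}(s)\Bigr]\,ds.
\]

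Next, I would exploit two monotonicity facts: (a) for any $N\ge 0$ integer-valued, $\prod_{j=0}^{k}(N-j)\ge 0$, so $\phi^{(k+1)}$ is non-negative on $[0,1]$ and non-decreasing on $[0,1)$ in $t$ (because $s\mapsto s^{N-k-1}$ is non-decreasing on $[0,1]$ for every $N\ge k+1$ on which the product is nonzero); and (b) the hypothesis \eqref{eq:condphi} is precisely $(\phi_y^\boldmu)^{(k+1)}(\delta_1)\ge (\phi_y^\boldnu)^{(k+1)}(1)$ when $k$ is even, and $(\phi_y^\boldnu)^{(k+1)}(\delta_1)\ge (\phi_y^\boldmu)^{(k+1)}(1)$ when $k$ is odd. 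Combining (a) and (b), on $s\in[\delta_1,1]$ the bracket $(\phi_y^\boldnu)^{(k+1)}(s)-(\phi_y^\boldmu)^{(k+1)}(s)$ has sign $(-1)^{k+1}$ (non-positive for $k$ even, non-negative for $k$ odd). Since $\int_1^t=-\int_t^1$ and $(t-s)^k$ has sign $(-1)^k$ on $[t,1]$, the integrand $(t-s)^k\cdot[(\phi_y^\boldnu)^{(k+1)}-(\phi_y^\boldmu)^{(k+1)}](s)$ always has sign $(-1)^{2k+1}=-1$ on $[t,1]$, and the overall minus sign from reversing the limits flips it, yielding $\phi_y^\boldnu(t)-\phi_y^\boldmu(t)\ge 0$ for every $t\in[\delta_1,1]$ and every $y$. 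This is exactly condition (2) of Theorem~\ref{pro:germindepdiff} with the same $\delta_1$ for all $y$, which gives $\boldmu\gegerm\boldnu$.

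The only real subtlety is technical: one must justify differentiating $\phi_y$ up to order $k+1$ on $[\delta_1,1)$ (immediate from the power-series representation, since $\phi_y$ is real-analytic on $(-1,1)$), ensure that $(\phi_y)^{(i)}(1)$ is finite for $i\le k$ (which is built into the factorial-moment hypotheses) and that the Taylor remainder formula applies on the closed interval $[t,1]$ even when $(\phi_y)^{(k+1)}(1)=+\infty$ (handled by applying the formula on $[t,1-\varepsilon]$ and letting $\varepsilon\downarrow 0$, using monotone convergence on the integral thanks to the sign/monotonicity of $(\phi_y)^{(k+1)}$). The main conceptual step—and the one I would present most carefully—is the sign bookkeeping that makes clear why the parity-dependent form of \eqref{eq:condphi} is the right condition to impose.
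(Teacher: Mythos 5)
Your proposal is correct and follows essentially the same route as the paper: the paper likewise reduces to the one-dimensional comparison via Theorem~\ref{pro:germindepdiff}, identifies the derivatives of $H_y(t)=\E[t^{U_y}-t^{W_y}]$ with the factorial-moment expressions, and concludes by a Taylor expansion in a left neighborhood of $1$ with the same parity-based sign analysis (the only cosmetic difference being your use of the integral remainder where the paper, mirroring the proof of Theorem~\ref{pro:germidextended}, uses the Lagrange form). Your write-up in fact supplies the sign bookkeeping and limiting arguments that the paper's very terse proof leaves implicit.
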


 We observe that \eqref{eq:condphi} implies \eqref{eq:condvarphi}, while the converse does not hold.
 More details can be found in Section~\ref{sec:proofs}.
  Note that \eqref{eq:condphi}, with $k=0$, are the conditions which appear in the statement of Proposition~\ref{pro:comparemix}.

\subsection{A comparison between independently diffusing families with mixed Poisson total distribution}\label{subsec:excompare}

We now compare two independently diffusing families $\boldmu$ and $\boldnu$, who share the same matrix $P$
and have Poisson total distributions.

It is trivial to show that if $\rho^\boldmu_y \sim \mathcal{P}oi(\lambda_1)$ and $\rho^\boldnu_y \sim \mathcal{P}oi(\lambda_2)$, for all $y\in Y$, 
then the following are equivalent: $\lambda_1 \ge \lambda_2$, $\boldmu \gegerm \boldnu$, $\boldmu \gepgf \boldnu$, $\boldmu \succeq \boldnu$ (where 
$\succeq$ is the usual stochastic domination).
In the following example, we compare $\boldmu$ and $\boldnu$ which have inhomogeneous total distributions. With a little more effort one could prove that the stronger inequality $\boldmu \succeq \boldnu$ holds.
\begin{exmp}\label{ex:inhpoi}
  Let $\boldmu$ and $\boldnu$ be two independently diffusing families of measures, with the same matrix $P$ and with total distributions $\rho^\boldmu_y\sim \mathcal{P}oi(\lambda_y^\boldmu)$, 
  $\rho^\boldnu_y\sim \mathcal{P}oi(\lambda_y^\boldnu)$,  where $\lambda_y^\boldmu\ge\lambda_y^\boldnu>0$, for all $y\in Y$.
Then $\boldmu\gepgf\boldnu$.   This follows easily by applying Theorem~\ref{pro:germindepdiff} to the explicit expressions of the generating functions $\phi^\boldmu_y(t)=\exp(\lambda^\boldmu_y(t-1))$ and $\phi^\boldnu_y(t)=\exp(\lambda^\boldnu_y(t-1))$.
\end{exmp}

This stochastic/pgf domination among families of measures with Poisson total distribution (where one varies the parameters, allowing random variables in the role of the parameter, as in mixed Poisson), can be mimicked in other distributions such as the geometric law, for instance. Therefore, what we are about to do in this section could be done, in principle, for the geometric distribution as well. Our choice of mixed Poisson laws over mixed geometric laws is purely pragmatic, since it simplifies the explicit computations.

Suppose now that the total distributions are mixed Poisson, that is, there exist
two families of $(0,+\infty)$-valued random variables
$\{\Lambda_{1,y}\}_{y\in Y}$ and   $\{\Lambda_{2,y}\}_{y\in Y}$, such that
\[\begin{split}
  \rho^\boldmu_y(j)&=\E[\exp(-\Lambda_{1,y})\cdot \Lambda_{1,y}^j/j!],\qquad \forall j\in\N;\\
  \rho^\boldnu_y(j)&=\E[\exp(-\Lambda_{2,y})\cdot \Lambda_{2,y}^j/j!],\qquad \forall j\in\N.
\end{split}
\]
This means that  the total distribution of the $\boldmu$ process, at $y$, is obtained by picking at random a parameter $\lambda\sim 
\Lambda_{1,y}$, and then generating a $\mathcal{P}oi(\lambda)$ random variable (and analogously for the number $\boldnu$).

The following proposition gives a sufficient condition for $\boldmu\gegerm\boldnu$.
\begin{pro}\label{pro:comparemix}
	Suppose that $\boldmu$ and $\boldnu$ are two families of independently diffusing families of measures, with the same matrix $P$. 
	Let
	 $\rho_y^\boldmu\sim\mathcal \mathcal{P}oi(\Lambda_{1,y})$ and $\rho_y^\boldnu\sim\mathcal \mathcal{P}oi(\Lambda_{2,y})$.
	\begin{enumerate}
		\item
		If $\E[\Lambda_{1,y}] \in (\E[\Lambda_{2,y}], +\infty]$, then there exists $\delta=\delta(y)<1$
		 such that  $\E[\Lambda_{1,y}\cdot\exp((t-1)\Lambda_{1,y})]\ge \E[\Lambda_{2,y}]$, for all $t\in [\delta,1]$.
		\item
		Suppose that there exists $ \delta<1$ such that 
		$\E[\Lambda_{1,y}\cdot\exp((\delta-1)\Lambda_{1,y})]\ge \E[\Lambda_{2,y}]$,  for all $y\in Y$.
		Then $\boldmu\gegerm\boldnu$.
	\end{enumerate}
\end{pro}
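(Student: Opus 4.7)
The plan is to reduce the whole proposition to a one-dimensional analysis of the functions $\phi_y^\boldmu$ and $\phi_y^\boldnu$, using the well-known fact that a mixed Poisson $\mathcal{P}oi(\Lambda)$ has probability generating function $\E[\exp((t-1)\Lambda)]$. Thus I would begin by writing
\[
\phi_y^\boldmu(t)=\E[\exp((t-1)\Lambda_{1,y})],\qquad \phi_y^\boldnu(t)=\E[\exp((t-1)\Lambda_{2,y})],
\]
valid for $t\in[0,1]$, so that these functions and all their derivatives can be handled by differentiating under the expectation, which is legitimate on $[0,1]$ because the integrands (and their $t$-derivatives, which are of the form $\Lambda_{i,y}^k\exp((t-1)\Lambda_{i,y})$ with $\exp((t-1)\Lambda_{i,y})\le 1$) are dominated by monotone convergence as $t\uparrow 1$.

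For part (1), I would consider the function $g_y(t):=\E[\Lambda_{1,y}\exp((t-1)\Lambda_{1,y})]$ on $[0,1]$. Since $t\mapsto \exp((t-1)\Lambda_{1,y})$ is increasing and bounded by $1$, monotone convergence gives $g_y(t)\uparrow \E[\Lambda_{1,y}]$ as $t\uparrow 1$, whether this limit is finite or $+\infty$. Since $\E[\Lambda_{1,y}]>\E[\Lambda_{2,y}]$ by hypothesis, continuity from the left at $t=1$ yields a $\delta=\delta(y)<1$ for which $g_y(t)\ge \E[\Lambda_{2,y}]$ for every $t\in[\delta,1]$.

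For part (2), by Theorem~\ref{pro:germindepdiff} it is enough to show that $\phi_y^\boldmu(t)\le\phi_y^\boldnu(t)$ for every $y\in Y$ and every $t\in[\delta,1]$, with the common $\delta$ from the hypothesis. Set $H_y(t):=\phi_y^\boldnu(t)-\phi_y^\boldmu(t)$; clearly $H_y(1)=0$, so it suffices to prove $H_y'(t)\le 0$ on $[\delta,1]$. Computing,
\[
H_y'(t)=\E[\Lambda_{2,y}\exp((t-1)\Lambda_{2,y})]-\E[\Lambda_{1,y}\exp((t-1)\Lambda_{1,y})].
\]
The first term is bounded above by $\E[\Lambda_{2,y}]$ because $\exp((t-1)\Lambda_{2,y})\le 1$ for $t\le 1$, while the second term is monotone nondecreasing in $t$ and therefore, for $t\in[\delta,1]$, is at least $\E[\Lambda_{1,y}\exp((\delta-1)\Lambda_{1,y})]\ge \E[\Lambda_{2,y}]$ by the standing hypothesis. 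Chaining the two inequalities gives $H_y'(t)\le 0$ on $[\delta,1]$, so $H_y$ is nonincreasing there and $H_y(t)\ge H_y(1)=0$, as required.

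I do not expect a genuine obstacle: once the pgfs are written as $\E[\exp((t-1)\Lambda_{i,y})]$, the argument is essentially monotone convergence plus the observation that increasing $t$ increases the integrands. The only point requiring a little care is allowing $\E[\Lambda_{1,y}]=+\infty$ in part (1), which is handled uniformly by monotone convergence, and ensuring that the derivative exchange in part (2) is justified on $[\delta,1]$, which follows from the same domination argument.
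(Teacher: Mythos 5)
Your proof is correct and follows essentially the same route as the paper: write the mixed-Poisson pgfs as $\phi_y(t)=\E[\exp((t-1)\Lambda_{i,y})]$, compare the derivatives on $[\delta,1]$ using monotonicity of $t\mapsto\Lambda\exp((t-1)\Lambda)$ together with the bound $\E[\Lambda_{2,y}\exp((t-1)\Lambda_{2,y})]\le\E[\Lambda_{2,y}]$, and conclude via Theorem~\ref{pro:germindepdiff}. The only cosmetic difference is that the paper delegates this derivative comparison to the $k=0$ case of Theorem~\ref{pro:betterconditions} (phrased via a Taylor/mean-value step at a single point), whereas you re-derive it inline by showing the difference of pgfs is monotone on $[\delta,1]$.
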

Note that if $\E[\exp(-\Lambda_{1,y})]>\E[\exp(-\Lambda_{2,y})]$, then $\boldmu \not \!\! \gepgf \boldnu$.

One may wonder if it is possible to compare a family $\boldnu$ with $\rho^\boldnu\sim \mathcal{P}oi(\lambda)$ ($\lambda$ being a positive real number),
to a family $\boldmu$ where $\rho^\boldmu_y$ is a mixed Poisson $\mathcal{P}oi(\Lambda_y)$.
The interesting case is when $\Lambda_y$ has support which intersects both $(\lambda,+\infty)$ and $(0,\lambda)$.
The following two examples show that,
both in the homogeneous and nonhomogeneous case, where 
$\Lambda_{y}\equiv\Lambda$  (i.e. the distributions do not depend on $y$), and the inhomogeneous one, 
it may happen that $\boldmu\gegerm\boldnu$ or $\boldnu \gegerm \boldmu$.


\begin{exmp}\label{exmp:mix}
Fix $\lambda, \varepsilon >0$ and $\alpha\in(0,1/2)$.
Let
$\Lambda\sim\alpha \delta_{\lambda-\varepsilon}+(1-\alpha) \delta_{\lambda+\varepsilon}$.
Let $\boldmu$ and $\boldnu$ be two independently diffusing families of measures, with the same matrix $P$ and with total distributions $\rho^\boldmu_y\sim \mathcal{P}oi(\Lambda)$, $\rho^\boldnu_y\sim \mathcal{P}oi(\lambda)$, for all $y\in Y$.
Then $\boldmu\gegerm\boldnu$. 
Moreover, if
$\alpha \in \big ( (1-\exp(-\varepsilon))/(\exp(\varepsilon)-\exp(-\varepsilon)), 1/2 \big )$, then
 $\boldmu \not \!\!\! \gepgf \boldnu$.
Indeed, it is enough to apply Proposition~\ref{pro:comparemix}, after
noting that
$\E[\Lambda] > \lambda$ and $\E[\exp(-\Lambda)]>\exp(-\lambda)$.

More generally, let $\Lambda_{1}\sim\alpha \delta_{\lambda_1}+(1-\alpha) \delta_{\lambda_2}$ and ${\Lambda_2}\sim\delta_\lambda$ where $\lambda_1 \le \lambda_2$. Clearly if $\lambda_1=\lambda_2 \ge \lambda$ (resp.~$\le \lambda$) then 
$\boldmu $ stochastically dominates $\boldnu$ (resp. $\boldnu $ stochastically dominates $\boldmu$) and this implies
$\boldmu\gegerm\boldnu$ (resp. $\boldnu\gegerm\boldmu$).
Suppose now that $\lambda_1 < \lambda_2$; we have that
\[
\begin{cases}
    \lambda_2 > 
    \lambda, \, \alpha < (\lambda_2-\lambda)/(\lambda_2-\lambda_1) \Longrightarrow \boldmu \gegerm \boldnu \\
    \lambda_1 < 
    \lambda, \, \alpha > (\lambda_2-\lambda)/(\lambda_2-\lambda_1) \Longrightarrow \boldnu \gegerm \boldmu. \\
\end{cases}
\]
\end{exmp}

It is not difficult to extend the same ideas to construct an example, where the parameters of the Poisson random variables depend on $y$.
The reader can find the computational details in Section~\ref{sec:proofs}.
\begin{exmp}\label{exmp:mix2}
Fix $0<\varepsilon <M<+\infty$ and $\{\lambda_y\}_{y\in Y}$, such that $\varepsilon<\lambda_y \le M$ for all $y\in Y$.
Let $\alpha_y$ such that $\sup_y \alpha_y<1/2$.
Let
$\Lambda_{y}\sim\alpha_y \delta_{\lambda_y-\varepsilon}+(1-\alpha_y) \delta_{\lambda_y+\varepsilon}$, 
for all $y\in Y$.
Let $\boldmu$ and $\boldnu$ be two independently diffusing families of measures, with the same matrix $P$ and with total distributions 
$\rho^\boldmu_y\sim \mathcal{P}oi(\Lambda_{y})$, 
$\rho^\boldnu_y \sim \mathcal{P}oi(\lambda_{y})$, 
respectively for all $y \in Y$.
Then $\boldmu\gegerm\boldnu$. Moreover, if $\alpha_y>(1-\exp(-\varepsilon))/(\exp(\varepsilon)-\exp(-\varepsilon))$, for some $y\in Y$, then $\boldmu \not \!\!\! \gepgf \boldnu$.

Conversely, let $\varepsilon$, $M$ and $\{\lambda_y\}_{y \in Y}$ as before and suppose that $\inf _y \alpha_y >1/2$. Then $\boldnu \gegerm \boldmu$. 
\end{exmp}
By putting together Examples~\ref{ex:inhpoi} and \ref{exmp:mix2}, we could compare two independently diffusing measures
$\boldmu$ and $\boldnu$ such that their total distributions are $\mathcal{P}oi(\Lambda_{1,y})$ and $\mathcal{P}oi(\Lambda_{2,y})$, respectively, where $\Lambda_{1,y}\sim\alpha_y \delta_{\lambda_y^\boldmu-\varepsilon_1}+(1-\alpha_y) \delta_{\lambda_y^\boldmu+\varepsilon_1}$ and  $\Lambda_{2,y}\sim\beta_y \delta_{\lambda_y^\boldnu-\varepsilon_2}+(1-\beta_y) \delta_{\lambda_y^\boldnu+\varepsilon_2}$.
Indeed, if $\lambda_y^\boldmu\ge\lambda_y^\boldnu$ and $\sup_y\alpha_y<1/2<\inf_y\beta_y$, then $\boldmu\gegerm\boldnu$.

\subsection{A comparison between independent and non-independent diffusion}
\label{subsec:genveindep}

Given an independently diffusing family $\boldnu$, one may wonder whether placing the same amount of individuals in 
different (non-independent) ways, provides a family of measures which is larger or smaller than $\boldnu$, according to
the $\gegerm$ order.\\
More precisely, let $\boldnu$ be an independently diffusing family whose total distribution is $\{\rho_y\}_{y\in Y}$:
we provide two families $\boldmu_1$ and $\boldmu_2$, which have the same total distribution as $\boldnu$,
and  such that $\boldmu_1 \gepgf \boldnu  \gepgf \boldmu_2$ (and thus
$\boldmu_1 \gegerm \boldnu  \gegerm \boldmu_2$).

The following example shows that assigning the same type (although randomly) to all the offspring, gives rise to a family of measures which is smaller 
than the independently diffusing one.
\begin{exmp}\label{exmp:nonind2}
Let $\boldnu$ be an independently diffusing family, with transition matrix $P=(p(y,x))_{y \in Y, x \in X}$. Let us define $\boldmu_2=\{\mu_{2,y}\}_{y\in Y}$ as follows
\[
\mu_{2,y}(f)=
\begin{cases}
    p(y,x) \rho_y^\boldnu(k) & \textrm{if } f=k \ident_{\{x\}}, \, k \in \N, \, x \in X \\
    0 & \textrm{otherwise},
\end{cases}
\]
where $\ident_A$ is the characteristic function of $A \subseteq X$.
Roughly speaking, according to $\mu_{2,y}$, we generate $k$ particles with probability $\rho^\boldnu_y(k)$ and assign to all of them the type $x$, with probability $p(y,x)$.
 Then the total offspring distribution of $\rho_y^{\boldmu_2}$ coincides with $\rho^\boldnu_y$, for all $y\in Y$, and $\boldnu\gepgf\boldmu_2$.

Indeed, let $\phi_y$ be the generating function of $\rho_y$.
We know that $G_\boldnu( \mathbf{z} |y):= \phi_y( P \mathbf{z}(y))$ for all $\mathbf{z} \in [0,1]^X$.
 The generating function of $\boldmu_2$ is $G_{\boldmu_2}(\mathbf{z}|y):= \sum_{x \in X} p(y,x) \phi_y(\mathbf{z}(x))$ for all $\mathbf{z} \in [0,1]^X$.
Since $\phi_y$ is a convex function,  for all $\mathbf{z} \in [0,1]^X$,
\[
G_\boldnu( \mathbf{z} |y) =\phi_y \Big ( \sum_{x \in X} p(y,x)\mathbf{z}(x) \Big ) \le 
\sum_{x \in X} p(y,x)\phi_y \big ( \mathbf{z}(x) \big )= G_{\boldmu_2}(\mathbf{z}|y).
\]
This proves that $\boldnu  \gepgf \boldmu_2$, thus $\boldnu  \gegerm \boldmu_2$.
\end{exmp}
The following example shows a non-independently diffusing family of measures which is larger than an independently diffusing one with the same total distribution. 
\begin{exmp}\label{exmp:nonind1}
Let $\boldnu$ be an independently diffusing family, with $|X|=n$ and transition matrix $P=(p(y,x))_{y \in Y, x \in X}$. Suppose that 
$p(y,x):=p_x \in \mathbb{Q}$ for all $x \in X, y \in Y$.

Let $k \in \N$ be such that $kp_x \in \N$ for all $x \in X$. An explicit example is $p_x=1/n$ and $k=n$. Choose a sequence of natural numbers $\{\alpha_i\}_{i \in \N}$ such that $k \alpha_i \le i$ and define $r_i:= i-k \alpha_i$. 
Define $\boldmu_1$ by the generating function $G_{\boldmu_1}(\mathbf{z}|y):=\sum_{i=0}^\infty 
\rho_y^\boldnu(i) \Big ( \prod_{x \in X} \mathbf{z}(x)^{\alpha_i k p_x}  \Big ) \Big ( \sum_{x \in X} p_x \mathbf{z}(x)  \Big )^{r_i}.$ 
Roughly speaking, this represents a process where a random number $i$ of particles is generated: $\alpha_i k p_x$ particle are chosen and labeled $x$ (and this is done for each $x \in X$); the remaining $r_i$ particles are assigned a random label independently according to the distribution $\{p_x\}_{x \in X}$. Then we have that $\rho^{\boldmu_1}_y\equiv\rho^\boldnu_y$, for all $y\in Y$, and $\boldmu_1\gepgf\boldnu$ (for the proof see Section \ref{sec:proofs}).
\end{exmp}

\section{Branching Random Walks and germ order}
\label{sec:BRW}

Given an at most countable set $X$ and a family of probability measures $\boldmu:=\{\mu_x\}_{x \in X}$ on $S_X$, the associated BRW, which we denote by
$(X,\boldmu)$ is a discrete-time stochastic process $\{\eta_n\}_{n \in \mathbb{N}}$ (time representing generations).

The dynamics is as follows: a particle at $x$ picks a random $f \in S_X$ according to $\mu_x$ then it dies and is replaced by $f(y)$ particles at $y$ (for all $y \in X$). In general, the children of a particle do not need to be placed independently, unless the family $\boldmu$ is independently diffusing: in this case we call the process \textit{independently diffusing BRW} or \textit{BRW with independent diffusion}. 
We say that the BRW starts at $x$ if $\eta_0=\delta_x$.
A formal construction of the process can be found in \cite[Section 2.1]{cf:Z1}

\begin{defn}\label{def:extinction-event} $\ $
We call \textsl{survival in $A \subseteq X$} the event 
\[
\mathcal S(A):=\Big \{\limsup_{n \to +\infty} \sum_{y \in A} \eta_n(y)>0\Big \},
\]
and we denote by $\mathcal E(A)=\mathcal S(A)^\complement$ the event that we call \textsl{extinction in $A$}. We denote by $\mathbf{q}(x,A)$ the probability of $\mathcal{E}(A)$ when the initial condition is 1 particle at $x$. We define the extinction probability vector $\mathbf{q}(A) \in[0,1]^X$ by its coordinates $\mathbf{q}(x,A)$ for $x \in X$.
\end{defn}

\begin{defn}\label{def:survival} $\ $
\begin{enumerate}
 \item 
The process \textsl{survives in $A \subseteq X$}, or there is survival in $A$, starting from $x \in X$, if $\mathbf{q}(x,A)<1$, 
otherwise the process \textsl{goes extinct in }$A$ (or dies out in $A$). 
\item
The process \textsl{survives globally}, starting from $x$, if
it survives in $X$.
 \item
 There is \textsl{strong survival in $A \subseteq X$}, starting from $x \in X$,
 if
 $ 
 {\mathbf{q}}(x,A)=\mathbf{q}(x,X) <1.
 $ 
\end{enumerate}
\end{defn}

According to the previous definition, when we talk about survival of the process (or extinction), we omit that this happens with positive probability
(resp. almost surely).
Results about survival and extinction in a set $A \subseteq X$ of the process has been studied in many papers: see for instance \cite{cf:BZ4, cf:BZ14-SLS,  cf:BZ2020, cf:Ligg1, cf:MachadoMenshikovPopov, cf:MenshikovVolkov, cf:PemStac1, cf:Z1} for discrete-time BRWs, \cite{cf:BZ2} for continuous-time BRWs and \cite{cf:BZ15, cf:GMPV09, cf:MP03} for BRWs in random environment.

While in homogeneous cases (i.e. breeding law which is independent of the site), through a coupling with a branching process, one can
easily establish where the process survives globally or not, in inhomogeneous cases the study is not trivial and in general it is still an open
question. A possible technique is to look for some regularity, like quasi-transitivity (see \cite[Section 2.4]{cf:BZ14-SLS} or \cite[Section 4.1]{cf:Z1}).
So far, the more general class of BRWs for which there is a simple characterization of global survival, is 
 the class of $\mathcal{F}$-BRWs (see for instance \cite{cf:BZ2, cf:BZ2017, cf:Z1} for the formal definition).
 This class includes homogeneous and nonhomogeneous quasi-transitive BRWs, but also more general inhomogeneous processes.
 Roughly speaking, an $\mathcal{F}$-BRW $(X, \boldmu)$ is a process which can be mapped onto a BRW $(U,\boldmu_1)$, where $U$ is finite.
 More precisely, there exists a map $g : X \mapsto U$ such that $G_\boldmu(\mathbf{z} \circ g)=G_{\boldmu_1}(\mathbf{z}) \circ g$ for every $\mathbf{z} \in[0,1]^U$. We refer to this map as a projection of $(X,\boldmu)$ onto $(U,\boldmu_1)$; in this case $\mathbf{q}(X)=\mathbf{q}(U)\circ g$, that is, $\mathbf{q}(X,x)=\mathbf{q}(U, g(x))$ for all $x \in X$. Therefore, either there is global survival for both processes or extinction for both.  This holds, even when $U$ is an infinite set. However when $U$ is finite, global survival for $(U, \boldmu_1)$ is equivalent to a simple inequality involving the Perron-Frobenius eigenvalue of the first-moment matrix of the process. 
 Indeed, let $M=(m_{x,y})_{x,y\in X}$ be the matrix whose entries are the expected number of offspring that an individual at $x$ places at $y$ during its lifetime, and let  $m_{x,y}^{(n)}$ be the entry of the $n$-power matrix $M^n$.
The $\mathcal{F}$-BRW survives globally if and only if $\liminf \sqrt[n]{\sum_{y \in X} m^{(n)}_{x,y}}>1$ (see \cite[Theorem 3.1]{cf:BZ14-SLS}).

Among BRWs with independent diffusion, there is a more explicit characterization of $\mathcal{F}$-BRWs which follows from the following proposition (see also \cite[Proposition 4.8]{cf:BCZ2024}): note that this is a result about projections where $U$ is not necessarily a finite set.
In this proposition, given a function $g:X \mapsto U$, we say that a family of offspring distributions $\{\rho_x\}_{x\in X}$ is $g$-invariant 
if $g(x)=g(y)$ implies 
$\rho_x =\rho_y$
for all $x,y \in X$. Moreover, we denote by $P$ the stochastic diffusion matrix as usual. 

\begin{pro}\label{prop:fbrwgen}
The following are equivalent.
\begin{enumerate}[(a)]
	\item
	There exists a surjective map $g:X\to U$
	such that, for every $g$-invariant family of offspring distributions $\{\rho_x\}_{x\in X}$,
	the resulting BRW with independent diffusion is projected on a BRW on $U$ with projection $g$.
	\item
	There exists a surjective map $g:X\to U$
	and a
	$g$-invariant family of offspring distributions $\{\rho_x\}_{x \in X}$ satisfying $\rho_x(0)<1$ for all $x \in U$, 
	such that 
	the resulting BRW with independent diffusion is projected on a BRW on $U$ with projection $g$.
	\item
	There exists a surjective  map $g:X\to U$ 
	such that 
	the quantity $\sum_{w\colon g(w)=u}p(x,w)$  only depends on $g(x)$ and $u$ (for all $x \in X$ and for all $u \in U$).
\end{enumerate}
In particular, if a BRW $(X, \boldmu)$ with independent diffusion on $X$ is projected on a BRW on $U$ with projection $g$ then the family $\{\rho^\boldmu_x\}_{x \in X}$ is $g$-invariant.

Suppose, in addition, that $\rho^\boldmu_x(0)<1$ for all $x \in X$; then the BRW with independent diffusion $(X, \boldmu)$ is projected on a BRW on $U$ with projection $g$ if and only if (c) holds and the family $\{\rho^\boldmu_x\}_{x \in X}$ is $g$-invariant.
\end{pro}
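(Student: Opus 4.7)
The plan is to establish the cyclic implications $(c)\Rightarrow(a)\Rightarrow(b)\Rightarrow(c)$, and then to derive the last two assertions as byproducts. The recurring computation is to substitute $\mathbf{z}=\mathbf{w}\circ g$ for $\mathbf{w}\in[0,1]^U$ in the independent-diffusion formula $G_\boldmu(\mathbf{z}|x)=\phi_x^\boldmu(P\mathbf{z}(x))$ from \eqref{eq:Gindepdiff} and rearrange the resulting inner sum according to the fibres of $g$:
\[
G_\boldmu(\mathbf{w}\circ g\,|\,x)=\phi_x^\boldmu\Big(\sum_{u\in U}\mathbf{w}(u)\sum_{w:\,g(w)=u}p(x,w)\Big).
\]
Thus the coefficients $\sum_{w:g(w)=u}p(x,w)$ appearing in (c) are exactly those that the projection identity forces the generating functions to share.

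For $(c)\Rightarrow(a)$, I would set $p_1(g(x),u):=\sum_{w:g(w)=u}p(x,w)$, which is a well-defined stochastic matrix on $U\times U$ by hypothesis; for any $g$-invariant family $\{\rho_x\}_{x\in X}$ I then put $\rho^1_u:=\rho_x$ for any $x\in g^{-1}(u)$, unambiguously by $g$-invariance. Plugging back into the display gives $G_\boldmu(\mathbf{w}\circ g\,|\,x)=\phi^1_{g(x)}(P_1\mathbf{w}(g(x)))=G_{\boldmu_1}(\mathbf{w})(g(x))$, i.e.\ the required projection. Step $(a)\Rightarrow(b)$ is immediate: pick any single law $\rho$ with $\rho(0)<1$ and set $\rho_x\equiv\rho$ for all $x\in X$, which trivially yields a $g$-invariant family satisfying the non-degeneracy condition.

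The substantive step is $(b)\Rightarrow(c)$. Fix $x,x'\in X$ with $g(x)=g(x')=u$; $g$-invariance gives $\phi_x^\boldmu=\phi_{x'}^\boldmu$, and the projection identity in (b) yields
\[
\phi^\boldmu_x\bigl(P(\mathbf{w}\circ g)(x)\bigr)=G_{\boldmu_1}(\mathbf{w})(u)=\phi^\boldmu_x\bigl(P(\mathbf{w}\circ g)(x')\bigr)\qquad\text{for every }\mathbf{w}\in[0,1]^U.
\]
The assumption $\rho_x(0)<1$ is equivalent to $\phi_x^\boldmu$ being strictly increasing on $[0,1]$, so I may cancel it, obtaining equality of the two arguments for every $\mathbf{w}$. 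Specializing $\mathbf{w}$ to the indicator of a singleton $\{u'\}$ for each $u'\in U$ isolates $\sum_{w:g(w)=u'}p(x,w)=\sum_{w:g(w)=u'}p(x',w)$, which is exactly (c). This is the step I expect to be the main obstacle, since it is where the non-degeneracy hypothesis is critically used: without it, a degenerate offspring law would make the projection identity vacuous while leaving (c) unverified.

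For the concluding assertions, $g$-invariance of $\{\rho^\boldmu_x\}_{x\in X}$ under the projection hypothesis follows by specializing the projection identity to the constant vector $\mathbf{w}\equiv t$ with $t\in[0,1]$, so that $\mathbf{w}\circ g\equiv t$ and both sides collapse to one-dimensional generating functions; this gives $\phi^\boldmu_x(t)=\phi^{\boldmu_1}_{g(x)}(t)$ and hence $\rho^\boldmu_x=\rho^{\boldmu_1}_{g(x)}$. Finally, under the additional assumption $\rho^\boldmu_x(0)<1$ for all $x\in X$, the closing iff statement is the conjunction of the implication $(b)\Rightarrow(c)$ (necessity of (c) together with $g$-invariance, both of which were established above) and $(c)\Rightarrow(a)$ applied to the specific $g$-invariant family $\{\rho^\boldmu_x\}_{x\in X}$ (sufficiency).
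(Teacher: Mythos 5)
Your argument is correct. Note that the paper itself does not prove Proposition~\ref{prop:fbrwgen} (it points to \cite[Proposition 4.8]{cf:BCZ2024}), so there is no in-paper proof to compare against; your cyclic scheme $(c)\Rightarrow(a)\Rightarrow(b)\Rightarrow(c)$ built on the identity $G_\boldmu(\mathbf{w}\circ g\,|\,x)=\phi_x^\boldmu\bigl(\sum_{u\in U}\mathbf{w}(u)\sum_{w:\,g(w)=u}p(x,w)\bigr)$ is the natural route and all steps check out. In particular, the two delicate points are handled properly: the cancellation in $(b)\Rightarrow(c)$ is legitimate because $\rho_x(0)<1$ makes $\phi_x^\boldmu$ strictly increasing (hence injective) on $[0,1]$, and testing against $\mathbf{w}=\ident_{\{u'\}}$ (a valid element of $[0,1]^U$) isolates the fibre sums; and the ``in particular'' clause correctly avoids any non-degeneracy hypothesis by specializing to constant vectors $\mathbf{w}\equiv t$, using that $P(t\mathbf{1})=t\mathbf{1}$ and that one-dimensional generating functions determine the laws. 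The only cosmetic remarks: the condition ``$x\in U$'' in (b) is evidently a typo for $x\in X$, which you implicitly and harmlessly read correctly, and in the final equivalence you should (as you do) apply the condition in (c) and the implication $(c)\Rightarrow(a)$ with the \emph{specific} map $g$ from the projection rather than with some other surjection, which your phrasing already makes clear.
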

 \noindent Note that the last sentence of Proposition~\ref{prop:fbrwgen}, when $U$ is a finite set, characterizes $\mathcal{F}$-BRWs with independent diffusion.

A classical technique one can use to study survival and extinction of a BRW, is to couple it with another process whose behavior is known.
For example, if there is a stochastic coupling such that $(X,\boldmu)$ is dominated by a process which goes extinct, then the same can be said of
$(X,\boldmu)$. Even when a stochastic coupling cannot be found, germ order may hold.
Applications of germ order to BRWs has been extensively studied in \cite{cf:Hut2022} (for BRWs with independent diffusion where the offspring distribution does not depend on the position) and \cite{cf:BZgerm} (for general BRWs). While we refer the reader to these papers for details, we include here the main result on germ domination of \cite{cf:BZgerm}.

\begin{teo}\label{th:germorder}
Let $\boldmu \gegerm \boldnu$ (with $\delta<1$) and  $A \subseteq X$.
\begin{enumerate}
	\item If $x \in X$ then $\mathbf{q}^\boldmu(x,A) \le \mathbf{q}^\boldnu(x,A) (1-\delta)+\delta$.
 \item If $x \in X$, then $\mathbf{q}^\boldnu(x,A)<\mathbf{1}$ implies 
$\mathbf{q}^\boldmu(x,A)<\mathbf{1}$.
 \item If $\sup_{x \in X} \mathbf{q}^\boldnu(x,X)<1$, then $\mathbf{q}^\boldnu(x,A)=\mathbf{q}^\boldnu(x,X)$ 
for all $x \in X$ implies
 $\mathbf{q}^\boldmu(x,A)=\mathbf{q}^\boldmu(x,X)$ 
 for all $x \in X$.
\end{enumerate}
\end{teo}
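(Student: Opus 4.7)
The plan is to exploit two structural features of the generating functions---the componentwise convexity of $G_\boldnu$ on $[0,1]^X$ and the inequality $G_\boldmu \le G_\boldnu$ on $[\delta,1]^X$ given by germ order---and propagate the resulting bound via the monotone-iteration characterization of the extinction probability vector.

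For part (1), I would begin by setting $\mathbf{z}:=(1-\delta)\mathbf{q}^{\boldnu}(A)+\delta\mathbf{1}\in[\delta,1]^X$. Convexity of $G_\boldnu$ (which is a coordinatewise power series with nonnegative coefficients), together with the fixed-point identities $G_\boldnu(\mathbf{q}^{\boldnu}(A)) = \mathbf{q}^{\boldnu}(A)$ and $G_\boldnu(\mathbf{1})=\mathbf{1}$, gives
\[
G_\boldnu(\mathbf{z}) \;\le\; (1-\delta)\,G_\boldnu(\mathbf{q}^{\boldnu}(A))+\delta\,G_\boldnu(\mathbf{1}) \;=\; \mathbf{z},
\]
and germ order further yields $G_\boldmu(\mathbf{z})\le G_\boldnu(\mathbf{z})\le\mathbf{z}$, so that $\mathbf{z}$ is a super-solution for $G_\boldmu$. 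Next, I would use the representation $\mathbf{q}^{\boldmu}(A)=\lim_n \tilde{\mathbf{q}}_n^{\boldmu}(A)$ for the increasing sequence $\tilde{\mathbf{q}}_n^{\boldmu}(x,A):=\Pr(\eta_k(A)=0 \ \forall k\ge n\mid \eta_0=\delta_x)$, which satisfies the recurrence $\tilde{\mathbf{q}}_{n+1}^{\boldmu}(A)=G_\boldmu(\tilde{\mathbf{q}}_n^{\boldmu}(A))$. Monotonicity of $G_\boldmu$ and the super-solution property then propagate the bound $\tilde{\mathbf{q}}_n^{\boldmu}(A)\le\mathbf{z}$ from $n$ to $n+1$, so it suffices to verify the base case $\tilde{\mathbf{q}}_0^{\boldmu}(A)\le\mathbf{z}$.

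The base case is the main technical obstacle: on $A^c$ it amounts to bounding an avoidance probability, and germ order does not apply directly, because the relevant fixed-point equation has vanishing entries on $A$. I would handle it by a parallel induction on the decreasing sequence $\mathbf{p}_n^{\boldmu}(x):=\Pr(\eta_k(A)=0\ \forall\,0\le k\le n\mid \eta_0=\delta_x)$, which obeys $\mathbf{p}_0^{\boldmu}=\mathbf{1}_{A^c}$, $\mathbf{p}_{n+1}^{\boldmu}=G_\boldmu(\mathbf{p}_n^{\boldmu})$ on $A^c$ and $\mathbf{p}_{n+1}^{\boldmu}=0$ on $A$, with $\mathbf{p}_n^{\boldmu}\searrow\tilde{\mathbf{q}}_0^{\boldmu}(A)$. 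The key observation is that $(1-\delta)\mathbf{p}_n^{\boldnu}+\delta\mathbf{1}$ always lies in $[\delta,1]^X$---precisely because $\mathbf{p}_n^{\boldnu}$ vanishes on $A$---so the same convexity-plus-germ computation applies to propagate $\mathbf{p}_n^{\boldmu}\le(1-\delta)\mathbf{p}_n^{\boldnu}+\delta\mathbf{1}$, starting from the trivial case $n=0$. Passing to the limit gives $\tilde{\mathbf{q}}_0^{\boldmu}(A)\le(1-\delta)\tilde{\mathbf{q}}_0^{\boldnu}(A)+\delta\mathbf{1}\le\mathbf{z}$, closing the base case and completing (1).

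Part (2) is immediate from (1). For part (3), applying (1) to both $A$ and $X$, and using $\mathbf{q}^{\boldnu}(A)=\mathbf{q}^{\boldnu}(X)$ and $c:=\sup_x\mathbf{q}^{\boldnu}(x,X)<1$, bounds both $\sup_x\mathbf{q}^{\boldmu}(x,A)$ and $\sup_x\mathbf{q}^{\boldmu}(x,X)$ by $(1-\delta)c+\delta<1$. Combined with the trivial $\mathbf{q}^{\boldmu}(X)\le\mathbf{q}^{\boldmu}(A)$, the equality $\mathbf{q}^{\boldmu}(A)=\mathbf{q}^{\boldmu}(X)$ will follow from a uniqueness principle for fixed points of $G_\boldmu$ uniformly bounded away from $\mathbf{1}$, in the spirit of \cite{cf:BZ14-SLS}: under $\sup_x\mathbf{q}^{\boldmu}(x,X)<1$, the only fixed point of $G_\boldmu$ in $[\mathbf{q}^{\boldmu}(X),\mathbf{1})$ that is bounded away from $\mathbf{1}$ is $\mathbf{q}^{\boldmu}(X)$ itself.
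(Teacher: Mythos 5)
You should first note that this paper does not actually prove Theorem~\ref{th:germorder}: it is imported verbatim from \cite{cf:BZgerm}, and Section~\ref{sec:proofs} contains no argument for it. So I am judging your proposal on its own merits, against what the source must do.

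Parts (1) and (2) are correct and essentially complete. Two small remarks: $G_\boldnu$ is \emph{not} jointly convex on $[0,1]^X$ (already $\mathbf{z}(x_1)\mathbf{z}(x_2)$ fails), but it is convex along any segment whose direction lies in the nonnegative orthant, because each monomial $\prod_x(u_x+td_x)^{f(x)}$ is then a finite product of nonnegative nondecreasing affine functions of $t$; since both of your segments point from a vector toward $\mathbf{1}$, the inequality $G_\boldnu((1-\delta)\mathbf{v}+\delta\mathbf{1})\le(1-\delta)G_\boldnu(\mathbf{v})+\delta\mathbf{1}$ that you use is valid. Your two-level monotone scheme --- the decreasing sequence $\mathbf{p}_n$ to control the base vector $\tilde{\mathbf{q}}_0(A)$, then the increasing sequence $\tilde{\mathbf{q}}_n(A)$ --- correctly handles the fact that $\mathbf{q}(A)$ is not the minimal fixed point, which is the genuine difficulty here; this is the right idea and, as far as I can tell, the same mechanism as in \cite{cf:BZgerm}.

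Part (3) has a real gap. The ``uniqueness principle'' you invoke --- that under $\sup_x\mathbf{q}^\boldmu(x,X)<1$ the only fixed point of $G_\boldmu$ in $[\mathbf{q}^\boldmu(X),\mathbf{1})$ bounded away from $\mathbf{1}$ is $\mathbf{q}^\boldmu(X)$ --- is false. Take the BRW in which every particle produces exactly one child placed according to a stochastic matrix $P$: then $G(\mathbf{z})=P\mathbf{z}$, the population is constantly $1$, so $\mathbf{q}(X)=\mathbf{0}$ and $\sup_x\mathbf{q}(x,X)=0<1$, yet every constant vector $c\mathbf{1}$ with $c\in(0,1)$ is a fixed point lying in $[\mathbf{q}(X),\mathbf{1})$ and bounded away from $\mathbf{1}$. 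So no purely fixed-point-theoretic argument can finish the proof; one must use that the specific fixed point at hand is an \emph{extinction probability vector}. The correct closing lemma is the probabilistic dichotomy: if $\sup_x\mathbf{q}(x,A)<1$ then $\mathbf{q}(A)=\mathbf{q}(X)$. Its proof uses L\'evy's $0$--$1$ law applied to the bounded martingale $M_n:=\prod_y\mathbf{q}(y,A)^{\eta_n(y)}=\Prob(\mathcal{E}(A)\,|\,\mathcal{F}_n)$, which converges a.s.\ to $\ident_{\mathcal{E}(A)}$; on $\mathcal S(X)$ one has $|\eta_n|\ge 1$ infinitely often, hence $M_n\le\sup_y\mathbf{q}(y,A)<1$ infinitely often, forcing $\ident_{\mathcal{E}(A)}=0$, i.e.\ $\mathcal S(X)\subseteq\mathcal S(A)$ a.s.\ and therefore $\mathbf{q}(A)=\mathbf{q}(X)$. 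With this lemma in place of your uniqueness claim, your reduction via part (1) --- which correctly yields $\sup_x\mathbf{q}^\boldmu(x,A)\le(1-\delta)\sup_x\mathbf{q}^\boldnu(x,X)+\delta<1$, and which is exactly where the hypothesis $\sup_x\mathbf{q}^\boldnu(x,X)<1$ is needed --- does complete part (3).
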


The meaning of the previous theorem is that if $\boldmu \gegerm \boldnu$ then survival in $A$ for $(X,\boldnu)$ implies survival in $A$ for $(X,\boldmu)$.
Moreover, provided that $\sup_{x \in X} \mathbf{q}^\boldnu(x,X)<1$, strong
survival in $A$ for $(X,\boldnu)$ implies strong survival in $A$ for $(X,\boldmu)$.
Theorem~\ref{th:germorder} includes the case $\boldmu \gepgf \boldnu$, when $\delta=0$.
Note that $\boldmu \gegerm \boldnu$ does not imply that $\mathbf{q}^{\boldmu}(X) \le \mathbf{q}^{\boldnu}(X)$, but only that 
if $\mathbf{q}^{\boldnu}(X)<\mathbf1$, the same is true for $\mathbf{q}^{\boldmu}(X)$.
On the other hand, $\boldmu \gepgf \boldnu$ implies $\mathbf{q}^{\boldmu}(X) \le \mathbf{q}^{\boldnu}(X)$.

This allows us to state that, in general, independent diffusing BRWs do not have the largest nor the smallest global survival probability, when compared to
non-independently diffusing processes.
Indeed, if $(X,\boldnu)$ is an independently diffusing BRW, Example~\ref{exmp:nonind2} shows that a BRW that places all the offspring in one site
has a lower probability of global survival.
On the other hand, if $(X,\boldnu)$ is as in Example~\ref{exmp:nonind1}, a BRW $(X,\boldmu_1)$ which, in some sense, maximizes the occupancy of all sites,
has a greater probability of global survival.

Our aim is to show how we can use germ coupling to understand the behavior of BRWs, in particular by using comparisons with $\mathcal F$-BRWs since for these processes there is an easy characterization of global survival in terms of the first-moment matrix as discussed before Proposition~\ref{prop:fbrwgen}.

\begin{rem}\label{rem:germfbrw}
Let $(X, \boldnu)$ be an $\mathcal{F}$-BRW with independent diffusion, with a stochastic matrix $P$ and projection $g : X \mapsto U$, where $U$ is finite (see Proposition~\ref{prop:fbrwgen}). Consider the (finite) set of laws $\{\rho_x^\boldnu \colon x \in X\}$; since, by Proposition~\ref{prop:fbrwgen},  $g(x)=g(y)$ implies $\rho_x^\boldnu=\rho_y^\boldnu$, we use the notation $\rho_{g(x)}^\boldnu$ instead of $\rho_x^\boldnu$ for all $x \in X$. Roughly speaking, we are indexing the collection on $U$ instead of $X$.  
Given a family $\boldmu$ independently diffusing, with the same matrix $P$, it may happen that each $\rho_x^\boldmu \gegerm \rho^\boldnu_{g(x)}$ (or $\rho^\boldnu_{g(x)} \gegerm \rho_x^\boldmu$), with a $\delta_x=\delta_{g(x)}<1$ which depends only on $g(x)$, for all $x\in X$. 

Then, since $U$ is finite, $\sup_{x \in X} \delta_{g(x)} =\max_{u \in U} \delta_u <1$ whence, according to Theorem~\ref{pro:germindepdiff}, $\boldmu \gegerm \boldnu$ (resp.~$\boldnu \gegerm \boldmu$).
Clearly, if $\rho^\boldmu_x \gepgf \rho_{g(x)}^\boldnu$ (resp.~$\rho_{g(x)}^\boldnu \gepgf \rho^\boldmu_x$) for all 
$x \in X$
then $\boldmu \gepgf \boldnu$ 
(resp.~$\boldnu \gepgf \boldmu$). Note that, in this construction $(X,\boldmu)$ is not necessarily an $\mathcal{F}$-BRW.

A simple example is when $\rho_x^\boldnu=\widehat \rho$, for every $x \in X$. In this case $(X,\boldnu)$ is an $\mathcal{F}$-BRW (it can be projected on a branching process with reproduction law $\widehat\rho$, just take a singleton $U:=\{u\}$).
Suppose that $\rho^\boldmu_x \gegerm \widehat \rho$ (resp.~$\widehat \rho \gegerm \rho^\boldmu_x$) for every $x \in X$, with the same $\delta<1$.
Then $\boldmu\gegerm\boldnu$.
In particular,
if we denote by $\widehat \phi$ the generating function of $\widehat \rho$, according to Theorem~\ref{th:germorder}, if $\frac{\diff}{\diff t}\widehat \phi (1)>1$ (resp.~$\frac{\diff}{\diff t}\widehat \phi (1) \le 1$) then 
$(X,\boldmu)$ survives globally (resp.~dies out). 
 \end{rem}

As we already observed in the introduction, in general, the global behavior of a BRW is not determined by the first moments of the breeding laws:
it may even happen that all the first moments are larger than $1+\varepsilon$, and the BRW dies out (see \cite[Example 2.10]{cf:BRZ16}). 
On the other hand, for some processes, the knowledge of the first moment matrix is sufficient. For instance, let $(X,\boldmu)$ be
an independently diffusing
BRW, with $\rho^\boldmu_x\sim\mathcal{P}oi(\lambda_x)$, $\lambda_x\ge
1+\varepsilon$ for all $x\in X$ and for some $\varepsilon>0$.
Then $(X,\boldmu)$ stochastically dominates $(X,\boldnu)$, where $(X,\boldnu)$ is an independently diffusing BRW, with the same transition matrix $P$
and $\rho^\boldnu_x\sim\mathcal{P}oi(1+\varepsilon)$ for all $x\in X$, which survives globally.
Unfortunately, the usual stochastic order is not always an option: as we showed in Section~\ref{subsec:excompare}, as soon as
the total distributions are mixed Poisson, $\rho^\boldmu_x\sim\mathcal{P}oi(\Lambda_x)$, 
even if $\E[\Lambda_x]>1+\varepsilon$,
it is no longer true that one can use the classic stochastic order (see Examples~\ref{exmp:mix} and \ref{exmp:mix2}). In this case we need a more powerful tool as the germ order.

In the following examples, we consider offspring distributions associated to mixed Poisson laws.
In both examples, the idea is to compare a more general process with $(X,\boldnu)$, which is an independently diffusing $\mathcal{F}$-BRW with Poisson total distribution $\rho^\boldnu_x\sim
\mathcal{P}oi(\lambda_x)$ of which the mixed Poisson is a ``perturbation".
Note that if such a $(X,\boldnu)$ is an $\mathcal{F}$-BRW, then $\inf_{x \in X} \lambda_x =\min_{x \in X} \lambda_x >0$ and $\sup_{x \in X} \lambda_x =\max_{x \in X} \lambda_x < +\infty$. 
Recall that if $(X,\boldnu)$ is an $\mathcal{F}$-BRW, then it survives globally if $\liminf \sqrt[n]{\sum_{y \in X} m^{(n)}_{x,y}}>1$ and it goes globally extinct
if
$\liminf \sqrt[n]{\sum_{y \in X} m^{(n)}_{x,y}}\le1$ (where $m_{x,y}=\lambda_x p(x,y)$ for all $x,y\in X$).
Note that if the total distributions do not depend on $x$, that is $\rho^\boldnu_x\sim
\mathcal{P}oi(\lambda_x)$ for all $x\in X$, then $(X,\boldnu)$ is an $\mathcal{F}$-BRW.

 \begin{exmp}\label{ex:fbrw}
 Let $(X, \boldnu)$ be an independently diffusing $\mathcal{F}$-BRW with transition matrix $P$ and total distribution 
 $\{\rho_x^\boldnu\}_{x \in X}$ (where $\rho_x^\boldnu \sim \mathcal{P}oi(\lambda_x)$ where $\lambda_x>0$ for all $x \in X$).
      Given $\varepsilon \in (0, \min_{x \in X} \lambda_x)$ and $\{\alpha_x\}_{x \in X}$,  where $\alpha_x \in (0,1)$, let $(X,\boldmu)$ be the independently diffusing BRW, with the same transition matrix $P$ as $(X,\boldnu)$ and $\rho_x^\boldmu \sim \mathcal{P}oi(\Lambda_{x})$ where $\Lambda_{x} \sim \alpha_x \delta_{\lambda_x-\varepsilon}+(1-\alpha_x) \delta_{\lambda_x +\varepsilon}$. 
     If $\sup_{x \in X} \alpha_x <1/2$ and $(X,\boldnu)$ survives globally,
         then the $(X,\boldmu)$ survives globally; conversely,
    if $\inf_{x \in X} \alpha_x >1/2$
    and $(X,\boldnu)$ dies out,
     then the $(X,\boldmu)$ goes globally extinct
    (we use the fact that in the first case $\boldmu\gegerm\boldnu$, and in the second case $\boldnu\gegerm\boldmu$, see Example~\ref{exmp:mix2}).
 \end{exmp}

\begin{exmp}\label{ex:fbrw2}
Take a sequence $\{\widehat\lambda_x\}_{x\in X}$ such that
 $\sup_x \widehat\lambda_x<+\infty$ and
$\inf_x \widehat\lambda_x>\varepsilon\ge0$.
Let $(X,\boldmu)$ be the independently diffusing BRW with 
transition matrix $P$ and total distribution
$\rho^\boldmu_x\sim \mathcal{P}oi(\Lambda_{x})$, where $\Lambda_x\sim \alpha_x \delta_{\widehat\lambda_x-\varepsilon}+(1-\alpha_x) \delta_{\widehat\lambda_x +\varepsilon}$. 
Suppose that there exists an independently diffusing $\mathcal{F}$-BRW $(X,\boldnu)$ with the same transition matrix and total distribution
$\rho^\boldmu_x\sim\mathcal{P}oi(\lambda_{x})$, with $\widehat\lambda_x\ge\lambda^\boldnu_x$ 
(resp. $\widehat\lambda_x\le\lambda^\boldnu_x$), for all $x\in X$.
 Then if $(X,\boldnu)$ survives globally and
 $\sup_{x \in X} \alpha_x <1/2$,
then $(X,\boldmu)$ survives globally (resp. 
    if $\inf_{x \in X} \alpha_x >1/2$
    and $(X,\boldnu)$ dies out, then $(X,\boldmu)$ goes globally extinct).
   
    
    In particular, if $\lambda_{\min}:=\inf_x \widehat{\lambda}_x>1$ and $\sup_{x \in X} \alpha_x <1/2$, then $(X,\boldmu)$ 
    survives globally.  Analogously, if $\lambda_{\max}:=\sup_x\widehat\lambda_x\le1$ and $\inf_{x \in X} \alpha_x >1/2$, then $(X,\boldmu)$ 
    dies out. In fact, we compare with $(X,\boldnu)$ which has,  in the first case, $\lambda_x=\lambda_{\min}$ and in the second case
    $\lambda_x=\lambda_{\max}$, for all $x\in X$.
\end{exmp}

Note that in Examples~\ref{ex:fbrw}, \ref{ex:fbrw2},
with positive probability there are infinitely many particles breeding at rate smaller than 1 and nevertheless $(X,\boldmu)$ survives globally.

\section{Branching Processes in Varying Environment and germ order}
\label{sec:BPVE}

Consider a BRW $(\mathbb{N},\boldmu)$ with independent diffusion and transition matrix $P=\big (p(n,m) \big )_{n,m \in \mathbb{N}}$, where $p(n,m)$ equals $1$ if $m=n+1$ (for all $n \in \mathbb{N}$) and $0$ otherwise. Roughly speaking, each generation moves one step to the right,
so that $\mathbb{N}$ is interpreted as time instead of space. This BRW is called Branching Process in Varying Environment (shortly \textit{BPVE}), which is a branching process where the offspring distribution depends on time. The initial condition is usually $\eta_0=\delta_0$ (one particle at time 0).
For this process, we are interested only in survival in infinite subsets $A \subseteq \mathbb{N}$, since on the one hand it does not depend on the choice of the infinite set $A$ and, on the other hand, in finite sets there is no survival. For this reason, in this section we just say ``survival" without 
explicitly referring  to the (infinite) set. 

It is natural to compare these processes, with the ``fixed'' environment studied by Galton and Watson: in that case, provided that the probability of having one child is not 1, it is well known that there is extinction if and only if the expected number of children $m$ is smaller or equal to one. As soon as the reproduction law is not constant, survival does not depend only on the sequence of expected number of children $\{m_n\}_{n \in \N}$. On the one hand, there are examples of surviving BPVE, where $m_n<1$ for all $n\in \N$ (see Example~\ref{exmp:BPVE0}). On the other hand, given any sequence $\{m_n\}_{n \in \N}$ it is possible to find a BPVE which dies out and has  expected number of children $\{m_n\}_{n \in \N}$ (see  \cite[Example 2.10]{cf:BRZ16}).

Let us denote by $\phi^\boldmu_n(t):=\sum_{i \in \mathbb{N}} \rho^\boldmu_n(i) t^i$ the generating function of the total offspring distribution at time $n$.
To avoid a trivial situation, we suppose henceforth that $\rho^\boldmu_n(0)<1$, for all $n \in \mathbb{N}$
(otherwise the process, starting with $\eta_0=\delta_0$, would die out almost surely). Under this assumption, the BPVE survives, starting with $\eta_0=\delta_0$,  if and only if it survives starting with $\eta_0=\delta_n$,  (for any fixed $n \in \mathbb{N}$): indeed, while the ``if" part is trivial, the ``only if" part follows from the fact that there is a positive probability that the progeny of a particle at time $0$ survives up to time $n$. Moreover, consider a strictly increasing sequence of integers $\{n_i\}_{i \in \mathbb{N}}$; a BPVE $\{\eta_n\}_{n \in \mathbb{N}}$ survives if and only if there are particles alive at time $n_i$ for all $i \in \mathbb{N}$, that is, if and only if $\{\eta_{n_i}\}_{i \in \mathbb{N}}$ survives. Clearly, the generating functions of the process $\{\eta_{n_i}\}_{i \in \mathbb{N}}$ are $\{\varphi_i\}_{i \in \mathbb{N}}$ where
\begin{equation}
    \label{eq:blocks}
    \varphi_i:=\phi_{n_{i+1}-1} \circ \ldots \circ \phi_{n_i}, \quad \forall i \in \mathbb{N}
\end{equation}
and $\{\phi_n\}_{n \in \mathbb{N}}$ are the generating function of the process $\{\eta_n\}_{n \in \mathbb{N}}$.
This remark along with Theorem~\ref{th:germorder} yields the following result,
which allows to study the process  observing it only along a subsequence of times.

\begin{pro}\label{pro:germBPVE}
Let us consider two BPVEs $(\mathbb{N},\boldmu)$ and $(\mathbb{N},\boldnu)$    
with total offspring generating functions $\{\phi^\boldmu_n\}_{n \in \mathbb{N}}$ and
$\{\phi^\boldnu_n\}_{n \in \mathbb{N}}$ respectively. Let 
$\{\varphi^\boldmu_n\}_{n \in \mathbb{N}}$ and
$\{\varphi^\boldnu_n\}_{n \in \mathbb{N}}$ be as in Equation~\eqref{eq:blocks},  namely
\[
\begin{cases}
    \varphi^\boldmu_i:=\phi^\boldmu_{n_{i+1}-1} \circ \ldots \circ \phi^\boldmu_{n_i},\\
        \varphi^\boldnu_i:=\phi^\boldnu_{m_{i+1}-1} \circ \ldots \circ \phi^\boldnu_{m_i},\\
\end{cases}
  \quad \forall i \in \mathbb{N},
\]
where 
$\{n_i\}_{i \in \mathbb{N}}$ and $\{m_i\}_{i \in \mathbb{N}}$ are fixed (strictly) increasing sequences of integers.
Suppose that there exists $\delta \in [0,1)$ such that $\varphi^\boldmu_n(t) \le \varphi^\boldnu_n(t)$ for all $t \in [\delta,1]$ and $n \in \mathbb{N}$. Then if $(\mathbb{N},\boldnu)$ survives, so does $(\mathbb{N},\boldmu)$.
\end{pro}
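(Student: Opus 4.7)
The plan is a two-step reduction to Theorem~\ref{th:germorder}. First I invoke the observation made just before the statement: a BPVE survives if and only if the subsampled process $\{\eta_{n_i}\}_{i\in\N}$ survives, and this subsampled process is itself a BPVE whose total-offspring generating functions are precisely the composed blocks defined in \eqref{eq:blocks}. Applying this reduction separately to $(\N,\boldmu)$ along $\{n_i\}$ and to $(\N,\boldnu)$ along $\{m_i\}$, it suffices to prove the statement after ``renaming'' $\varphi^\boldmu_n\mapsto\phi^\boldmu_n$ and $\varphi^\boldnu_n\mapsto\phi^\boldnu_n$; that is, we may assume $\{n_i\}$ and $\{m_i\}$ are the identity sequence and reduce to the setting where the hypothesis $\phi^\boldmu_n(t)\le\phi^\boldnu_n(t)$ for all $t\in[\delta,1]$ and all $n$ is the direct assumption on the processes.

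Next, I view each BPVE as an independently diffusing BRW on $X=Y=\N$ sharing the \emph{same} deterministic shift matrix $P$ given by $p(n,m)=1$ if $m=n+1$ and $0$ otherwise. By \eqref{eq:Gindepdiff}, the multidimensional generating function of such a BRW reduces to
\[
G_\boldmu(\mathbf{z}\mid n)=\phi^\boldmu_n\bigl(\mathbf{z}(n+1)\bigr),\qquad G_\boldnu(\mathbf{z}\mid n)=\phi^\boldnu_n\bigl(\mathbf{z}(n+1)\bigr).
\]
For every $\mathbf{z}\in[\delta,1]^{\N}$ one has $\mathbf{z}(n+1)\in[\delta,1]$, so the one-dimensional assumption immediately upgrades to $G_\boldmu(\mathbf{z})\le G_\boldnu(\mathbf{z})$ on the whole of $[\delta,1]^{\N}$, uniformly in $n$, with the \emph{same} $\delta$. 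This is precisely $\boldmu\gegerm\boldnu$; alternatively, it is the implication $(2)\Rightarrow(1)$ of Theorem~\ref{pro:germindepdiff}, whose applicability is guaranteed by the fact that $\boldmu$ and $\boldnu$ are independently diffusing with the common matrix $P$.

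Finally, I apply Theorem~\ref{th:germorder}(2) with $A=X=\N$ and starting site $x=0$: the assumed survival of $(\N,\boldnu)$ from $\eta_0=\delta_0$ means $\mathbf{q}^\boldnu(0,\N)<1$, and the germ domination just established then forces $\mathbf{q}^\boldmu(0,\N)<1$, i.e.\ survival of $(\N,\boldmu)$. The whole argument is essentially a packaging of earlier results, so I do not anticipate a genuine obstacle; the only mildly delicate point is the very first step, the equivalence between survival of the original BPVE and survival of its subsampled version. This in turn reduces to the elementary remark that the event ``some generation is empty'' is absorbing, hence $\{\eta_n>0\text{ for all }n\}=\bigcap_i\{\eta_{n_i}>0\}$, and both comparisons are carried out under the same probability measure.
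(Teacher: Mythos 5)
Your argument is correct and is exactly the route the paper intends: the in-text remark before the proposition (survival is equivalent to survival of the subsampled process, which is itself a BPVE with the block generating functions $\varphi_i$), followed by Theorem~\ref{pro:germindepdiff} $(2)\Rightarrow(1)$ for the shift matrix to get $\boldmu\gegerm\boldnu$ for the subsampled families, and then Theorem~\ref{th:germorder}(2). The paper gives no separate proof beyond this remark, so your write-up is simply a fleshed-out version of the same argument, including the one genuinely delicate point (relating survival of the subsampled process started from a single particle at time $n_0$ to survival of the original from $\delta_0$, which uses $\rho_n(0)<1$).
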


Note that in the previous proposition, only the generating functions 
$\{\phi^\boldmu_n\}_{n \ge n_0}$ and
$\{\phi^\boldnu_m\}_{m \ge m_0}$ are involved. In particular, if there exists $\delta \in [0,1)$ such that $\phi^\boldmu_n(t) \le \phi^\boldnu_n(t)$ for all $t \in [\delta,1]$ and $n \ge n_0$ (for some $n_0 \in \mathbb{N}$), then the proposition applies by taking $\{n_i\}_{i \in \mathbb{N}}$ and $\{m_i\}_{i \in \mathbb{N}}$ as $n_i=m_i:=n_0+i$ for all $i \in \mathbb{N}$.

The main use for Proposition~\ref{pro:germBPVE} is to find sufficient conditions for survival or extinction for BPVEs. Fix a suitable surviving process $\boldnu$ (resp.~non-surviving process $\boldmu$): by handling the inequality $\phi^\boldmu_n(t) \le \phi^\boldnu_n(t)$ one could find such conditions. A full and deep analysis of this topics goes beyond the purpose of this paper, nevertheless we can describe some simple conditions of this kind (see Example~\ref{exmp:BPVE1} and Proposition~\ref{pro:BPVE1}).
To this aim, note that, for all $t \in [0,1]$,
\[
\phi^\boldmu_n(t)=
1+(t-1)\sum_{i \in \mathbb{N}} \Big (\sum_{j \ge i+1} \rho^\boldmu_n(j) \Big ) t^i.
\]
This leads to the following lemma.

\begin{lem}\label{lem:donotwriteanotherlemmaplease}
For every $t \in [0,1]$, $n \in \mathbb{N}$, the inequality $\phi^\boldmu_n(t) \le \phi^\boldnu_n(t)$ is equivalent to
\begin{equation}\label{eq:reverseinequality1}
\sum_{i \in \mathbb{N}} \Big (\sum_{j \ge i+1} \rho^\boldmu_n(j) \Big ) t^i \ge \sum_{i \in \mathbb{N}} \Big (\sum_{j \ge i+1} \rho^\boldnu_n(j) \Big ) t^i.
\end{equation}
Moreover, if the following inequality holds for some $k_0\in\N$
\begin{equation}\label{eq:reverseinequality2}
\sum_{i=0}^{k_0} \Big (\sum_{j \ge i+1} \rho^\boldmu_n(j) \Big ) t^i \ge \sum_{i \in \mathbb{N}} \Big (\sum_{j \ge i+1} \rho^\boldnu_n(j) \Big ) t^i,
\end{equation}
then it holds for all $k\ge k_0$ and \eqref{eq:reverseinequality1} holds (thus also $\phi^\boldmu_n(t) \le \phi^\boldnu_n(t)$).
\end{lem}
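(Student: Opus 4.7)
The plan is to exploit the identity for $\phi_n^\boldmu(t)$ displayed immediately before the lemma. That identity comes from writing $1-\phi_n^\boldmu(t)=\sum_i \rho_n^\boldmu(i)(1-t^i)$, factoring $1-t^i=(1-t)(1+t+\cdots+t^{i-1})$, and swapping the order of summation to collect the coefficient of each $t^k$. The same manipulation applied to $\boldnu$ gives the analogous identity, so that
\[
\phi^\boldnu_n(t)-\phi^\boldmu_n(t) = (1-t)\sum_{i\in\N}\Bigl(\sum_{j\ge i+1}\rho^\boldmu_n(j)-\sum_{j\ge i+1}\rho^\boldnu_n(j)\Bigr) t^i .
\]
First I would record this once and for all; this does the conceptual work for the whole lemma.

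For the equivalence with \eqref{eq:reverseinequality1}, I split on whether $t<1$ or $t=1$. When $t\in[0,1)$ the factor $(1-t)$ is strictly positive, so $\phi^\boldmu_n(t)\le\phi^\boldnu_n(t)$ is equivalent to the bracketed difference above being nonnegative, which is exactly \eqref{eq:reverseinequality1}. At $t=1$ both sides of $\phi^\boldmu_n(t)\le\phi^\boldnu_n(t)$ are $1$ and both sides of \eqref{eq:reverseinequality1} converge to $\E[T_\boldmu]$ and $\E[T_\boldnu]$ respectively (possibly $+\infty$), so the equivalence at $t=1$ is automatic; in any case the lemma is stated for $t\in[0,1]$ so I need not dwell on this endpoint.

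For the second assertion, observe that for $t\in[0,1]$ every term $\bigl(\sum_{j\ge i+1}\rho^\boldmu_n(j)\bigr)t^i$ is nonnegative. Hence the partial sums
\[
S_k(t):=\sum_{i=0}^{k}\Bigl(\sum_{j\ge i+1}\rho^\boldmu_n(j)\Bigr)t^i
\]
are nondecreasing in $k$, so \eqref{eq:reverseinequality2} at $k_0$ propagates to every $k\ge k_0$, and letting $k\to+\infty$ gives \eqref{eq:reverseinequality1}, which by the first part implies $\phi^\boldmu_n(t)\le\phi^\boldnu_n(t)$.

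There is no real obstacle here: the only mild point is keeping the direction of the inequality straight after multiplying by the nonnegative factor $(1-t)$, and making sure the identity is applied to both $\boldmu$ and $\boldnu$ before subtracting so that the formally infinite sums on the right never have to be manipulated individually (they may diverge at $t=1$, but their difference, weighted by $(1-t)$, is well defined).
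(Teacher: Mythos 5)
Your proof is correct and follows essentially the route the paper intends: the displayed identity $\phi^\boldmu_n(t)=1+(t-1)\sum_{i}\big(\sum_{j\ge i+1}\rho^\boldmu_n(j)\big)t^i$ (a Tonelli rearrangement of $1-\phi^\boldmu_n(t)=\sum_i\rho^\boldmu_n(i)(1-t^i)$) is stated immediately before the lemma precisely so that the lemma reads off from it, and the paper gives no separate proof in Section~\ref{sec:proofs}. Your handling of the second assertion (monotonicity of the nonnegative partial sums in $k$, then $k\to\infty$) is exactly right.

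One small correction: your claim that ``the equivalence at $t=1$ is automatic'' is not accurate. At $t=1$ the inequality $\phi^\boldmu_n(1)\le\phi^\boldnu_n(1)$ is trivially true, whereas \eqref{eq:reverseinequality1} at $t=1$ reduces to $\E[U_n]\ge\E[W_n]$ (comparison of means, possibly infinite), which can fail; so the pointwise equivalence genuinely breaks at the endpoint, and only the implication \eqref{eq:reverseinequality1} $\Rightarrow$ $\phi^\boldmu_n(t)\le\phi^\boldnu_n(t)$ survives there. This is an imprecision already present in the lemma as stated, and it is harmless because every application uses only that surviving direction, but you should say that rather than call the endpoint automatic.
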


We give an example of BPVE where at each time the support of the breeding law is $\{0,1\}$ and yet the process survives.
\begin{exmp}\label{exmp:BPVE0}
Let $(\N,\boldnu)$ be a BPVE, with $\rho^\boldnu_n(1)=a_n$ and $\rho^\boldnu_n(0)=1-a_n$, where $a_n\in(0,1]$, for all $n\in \N$.
The process survives if and only if $\sum_{n \in \mathbb{N}} (1-a_n)<+\infty$ (the probability of survival being $\prod_{n \in \mathbb{N}} a_n>0$). 
\end{exmp}

\begin{exmp}\label{exmp:BPVE1}
Let $(\N,\boldmu)$ be a BPVE, with $\limsup_{n \to +\infty} (2\rho_n^\boldmu (0)+\rho_n^\boldmu(1))<\delta < 1$, then 
$(\N,\boldmu)$ survives.
\\
We apply \eqref{eq:reverseinequality2}, with $k_0=1$, to compare $(\N, \boldmu)$ with the process $(\N, \boldnu)$ of Example~\ref{exmp:BPVE0}.
We have that $\sum_{i \in \mathbb{N}} (\sum_{j \ge i+1} \rho^\boldnu_n(j) ) t^i=a_n$
and $\sum_{i=0}^{1}  (\sum_{j \ge i+1} \rho^\boldmu_n(j)  ) t^i=1-\rho_n^\boldmu(0)+(1-\rho_n^\boldmu(0)-\rho_n^\boldmu(1))t$.
Now take take $\delta$ such that $\limsup_{n \to +\infty} (2\rho_n^\boldmu (0)+\rho_n^\boldmu(1))<\delta < 1$. For all sufficiently large $n$,
\[
\begin{split}
1-\rho_n^\boldmu(0)+(1-\rho_n^\boldmu(0)-\rho_n^\boldmu(1))\delta 
& \ge 1-(\delta -\rho_n^\boldmu(0)-\rho_n^\boldmu(1))+(1-\rho_n^\boldmu(0)-\rho_n^\boldmu(1)) \delta\\
& = 1+ (1-\delta)(\rho_n^\boldmu(0)+\rho_n^\boldmu(1)) \ge 1 \ge a_n.
\end{split}
\]
The proof is complete once we note that $1-\rho_n^\boldmu(0)+(1-\rho_n^\boldmu(0)-\rho_n^\boldmu(1))t\ge 1-\rho_n^\boldmu(0)+(1-\rho_n^\boldmu(0)-\rho_n^\boldmu(1))\delta$ for all $t\in[\delta,1]$.
\end{exmp}

The argument of the previous example can be extended, with a similar argument, to the statement of the following proposition.

\begin{pro}
    \label{pro:BPVE1}
    If  there exists $k_0 \in \mathbb{N}\setminus\{0\}$ such that $\limsup_{n \to +\infty} \sum_{k=0}^{k_0} (k_0+1-k) \rho^\boldmu_n(k) <k_0$ then the BPVE $(\mathbb{N}, \boldmu)$ survives.
\end{pro}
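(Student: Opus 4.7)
The plan is to mimic Example~\ref{exmp:BPVE1}: compare $(\N,\boldmu)$ with the process $(\N,\boldnu)$ of Example~\ref{exmp:BPVE0} taking $a_n\equiv 1$ (so that $(\N,\boldnu)$ survives trivially, with probability one), and then invoke Lemma~\ref{lem:donotwriteanotherlemmaplease} together with Proposition~\ref{pro:germBPVE}. Set
$$f_n(t):=\sum_{i=0}^{k_0}\Bigl(\sum_{j\ge i+1}\rho^\boldmu_n(j)\Bigr) t^i.$$
The core of the argument is to show that there exist $\delta\in[0,1)$ and $n_0\in\N$ such that $f_n(t)\ge 1$ for every $t\in[\delta,1]$ and every $n\ge n_0$. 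Once this is established, inequality \eqref{eq:reverseinequality2} holds with right-hand side equal to $a_n=1$, so Lemma~\ref{lem:donotwriteanotherlemmaplease} yields $\phi^\boldmu_n(t)\le \phi^\boldnu_n(t)$ on $[\delta,1]$ for $n\ge n_0$; applying Proposition~\ref{pro:germBPVE} with $n_i=m_i:=n_0+i$ then transfers survival from $(\N,\boldnu)$ to $(\N,\boldmu)$.

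To verify the bound on $f_n$, I would first compute $f_n(1)$ by exchanging the two finite sums and splitting according to whether $j\le k_0$ or $j>k_0$:
$$f_n(1)=\sum_{j\ge 1}\min(j,k_0+1)\,\rho^\boldmu_n(j)=(k_0+1)-\sum_{k=0}^{k_0}(k_0+1-k)\,\rho^\boldmu_n(k).$$
The hypothesis is precisely $\liminf_n f_n(1)>1$, so there exist $\eps>0$ and $n_0\in\N$ with $f_n(1)\ge 1+\eps$ for all $n\ge n_0$. A uniform continuity estimate extends this to a neighbourhood of $t=1$: using $1-t^i\le i(1-t)$ together with the trivial bound $\sum_{j\ge i+1}\rho^\boldmu_n(j)\le 1$ gives
$$0\le f_n(1)-f_n(t)\le \sum_{i=1}^{k_0} i(1-t)=\frac{k_0(k_0+1)}{2}(1-t),\qquad t\in[0,1],$$
independently of $n$. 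Choosing $\delta\in[0,1)$ so that $\tfrac{k_0(k_0+1)}{2}(1-\delta)\le\eps$ therefore yields $f_n(t)\ge 1$ for all $t\in[\delta,1]$ and $n\ge n_0$, which is what was required.

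There is no serious obstacle: the crucial point is simply that the hypothesis rewrites exactly as $\liminf_n f_n(1)>1$, and uniform continuity in $t$ is free because all coefficients of the polynomial $f_n$ are probabilities, hence bounded by $1$. The only mild subtlety is to choose the comparison law $\boldnu$ so that its "free parameter" saturates at $a_n=1$; this is admissible since Proposition~\ref{pro:germBPVE} needs only the survival of $(\N,\boldnu)$, and survival of the BPVE is unaffected by discarding the first $n_0$ generations (so we do not need the inequality to hold at small $n$).
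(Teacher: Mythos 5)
Your proof is correct and follows essentially the same route as the paper: comparison with the process of Example~\ref{exmp:BPVE0} via inequality \eqref{eq:reverseinequality2}, Lemma~\ref{lem:donotwriteanotherlemmaplease} and Proposition~\ref{pro:germBPVE}, with the identity $f_n(1)=(k_0+1)-\sum_{k=0}^{k_0}(k_0+1-k)\rho^\boldmu_n(k)$ being exactly the paper's rearrangement of the double sum. The only (immaterial) differences are that you take $a_n\equiv1$ rather than a generic surviving sequence and control the dependence on $t$ by an additive Lipschitz bound $f_n(1)-f_n(t)\le\tfrac{k_0(k_0+1)}{2}(1-t)$, where the paper uses the multiplicative bound $\delta^k\ge\delta^{k_0}$.
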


Note that the hypothesis of Proposition~\ref{pro:BPVE1}, namely $\limsup_{n \to +\infty} \sum_{k=0}^{k_0} (k_0+1-k) \rho^\boldmu_n(k) <k_0$, is implied by (but does not imply) $\liminf_{n \to +\infty} \sum_{k=0}^{k_0} k \rho_n^\boldmu(k) >1$. Nevertheless a partial converse is true: if $\limsup_{n \to +\infty} \sum_{k=0}^{k_0} (k_0+1-k) \rho^\boldmu_n(k) <k_0$, then $\liminf_{n \to +\infty} \sum_{k \in \mathbb{N}} k \rho_n^\boldmu(k) >1$
(the reverse implication does not hold).

Besides giving new conditions for survival or extinction of a BPVE, germ order allows to relax some conditions which are known in literature. Again, we are not planning to go into details but here is an example, which applies and extends \cite[Theorem 2.6]{cf:BRZ16}.

\begin{teo}
    \label{pro:bertrodzuc}
    Let us consider two BPVEs $(\mathbb{N},\boldmu)$ and $(\mathbb{N},\boldnu)$    
with total offspring generating functions $\{\phi^\boldmu_n\}_{n \in \mathbb{N}}$ and
$\{\phi^\boldnu_n\}_{n \in \mathbb{N}}$ respectively. Suppose that there exists $\delta \in [0,1)$ and $n_0 \in \mathbb{N}$ such that $\phi^\boldmu_n(t) \le \phi^\boldnu_n(t)$ for all $t \in [\delta,1]$ and $n \ge n_0$. Define the $k$th moment of the law $\rho_n^\boldnu$ as $m_n^{(k)}:=\sum_{i \in \mathbb{N}} i^k p_n^\boldnu(i)$.
Suppose that $m^{(2)}_n<+\infty$ for every sufficiently large $n\in \mathbb{N}$. If 
\begin{equation}
    \label{eq:bertrodzuc1}
    \begin{cases}
        \sum_{j =n_0}^\infty \frac{m_j^{(2)}-m^{(1)}_j}{m^{(1)}_j} \Big ( \prod_{i=n_0}^j m^{(1)}_i  \Big )^{-1}<+\infty \\
        \inf_{j \in \mathbb{N}} \prod_{i=n_0}^j m^{(1)}_i >0
    \end{cases}
\end{equation}
then $(\mathbb{N}, \boldmu)$ survives.
\end{teo}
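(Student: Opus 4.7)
The plan is to split the argument into two essentially independent parts: first, deduce survival of the auxiliary process $(\mathbb{N},\boldnu)$ from the moment conditions \eqref{eq:bertrodzuc1} via a classical criterion; second, transfer this survival to $(\mathbb{N},\boldmu)$ using germ order through Proposition~\ref{pro:germBPVE}. The point of the theorem is exactly this decoupling: the moment assumptions live on $\boldnu$ (where second moments are finite), while the surviving process $\boldmu$ may have no finite moments at all.

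For the first part, I would observe that \eqref{eq:bertrodzuc1} is precisely the hypothesis of \cite[Theorem 2.6]{cf:BRZ16} applied to $\boldnu$ started at time $n_0$, noting that $m_j^{(2)}-m_j^{(1)}$ is the second factorial moment $\E[T(T-1)]$ for $T\sim \rho^\boldnu_j$. The standard way to establish this, if one wants a self-contained argument, is an $L^2$-martingale computation: consider $W_k:=Z_{n_0+k}/\prod_{i=n_0}^{n_0+k-1} m^{(1)}_i$ for the BPVE $\boldnu$ started with one particle at time $n_0$. Then $(W_k)_{k\ge 0}$ is a non-negative martingale with $\E[W_k]=1$, and a direct computation shows
\[
\E[W_{k+1}^2]-\E[W_k^2] \;=\; \frac{m_{n_0+k}^{(2)}-(m_{n_0+k}^{(1)})^2}{m_{n_0+k}^{(1)}} \cdot \E\!\left[\frac{W_k}{\prod_{i=n_0}^{n_0+k-1} m^{(1)}_i}\right],
\]
which, combined with the series condition and the infimum condition in \eqref{eq:bertrodzuc1}, bounds $\sup_k \E[W_k^2]$. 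Thus $W_k\to W_\infty$ in $L^2$ with $\E[W_\infty]=1$, so $\Prob(W_\infty>0)>0$, giving survival of $(\mathbb{N},\boldnu)$ started at $n_0$; by the discussion preceding Proposition~\ref{pro:germBPVE} (applied to $\boldnu$, which satisfies $\rho^\boldnu_n(0)<1$ for $n\ge n_0$ since $m^{(1)}_n>0$), this is equivalent to survival started at $0$.

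For the second part, I would apply Proposition~\ref{pro:germBPVE} with $n_i=m_i:=n_0+i$ for all $i\in\mathbb{N}$. With this choice the compositions reduce to $\varphi^\boldmu_i=\phi^\boldmu_{n_0+i}$ and $\varphi^\boldnu_i=\phi^\boldnu_{n_0+i}$, so the germ-domination assumption $\phi^\boldmu_n(t)\le \phi^\boldnu_n(t)$ for $t\in[\delta,1]$ and $n\ge n_0$ is precisely the hypothesis of the proposition. Survival of $(\mathbb{N},\boldnu)$ from the first part then transfers to survival of $(\mathbb{N},\boldmu)$.

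The main obstacle is the first part: one must verify that the variance/second-factorial-moment manipulation in \eqref{eq:bertrodzuc1} really matches the $L^2$-computation (or, equivalently, match the hypotheses of \cite[Theorem 2.6]{cf:BRZ16} exactly). The second part is pure bookkeeping once Proposition~\ref{pro:germBPVE} is available.
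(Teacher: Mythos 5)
Your proposal is correct and follows exactly the paper's route: Equation~\eqref{eq:bertrodzuc1} gives survival of $(\mathbb{N},\boldnu)$ via \cite[Theorem 2.6]{cf:BRZ16}, and Proposition~\ref{pro:germBPVE} (with $n_i=m_i=n_0+i$) transfers survival to $(\mathbb{N},\boldmu)$. The optional $L^2$-martingale expansion of the first step is extra detail the paper does not include, but the core argument is the same.
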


The proof is straightforward: if Equation~\eqref{eq:bertrodzuc1} holds, then according to \cite[Theorem 2.6]{cf:BRZ16}, $(\mathbb{N}, \boldnu)$ survives, thus by Proposition~\ref{pro:germBPVE} $(\mathbb{N}, \boldmu)$ survives. The point here is that the moments of $(\mathbb{N}, \boldmu)$, unlike those of $(\mathbb{N}, \boldnu)$, do not need to be finite. The same approach can be used to extend other results in the theory of branching processes in varying environment; to this aim, see for instance \cite{cf:Agresti75, cf:BM08, cf:Bigg, cf:Faern}.

\section{Rumor Processes}
\label{sec:firework}

The main purpose of this section is to generalize the rumor processes studied in \cite{cf:BZ13}: the firework and the reverse firework processes.
Here $Y$ represents the set of vertices of a non-oriented graph (or a discrete metric space); on the other hand, $X$ represents the type of fireworks that we can place at each point of $X$.
In this interpretation, $\mu_y$ is the law of the random choice of stations/fireworks placed at $y$: a function $f \in S_X$ is chosen according to $\mu_y$ and $f(x)$ stations/fireworks of type $x$ are placed at $y$ (for every $x \in X$). Note that, while the configurations at different points $y \neq \hat y$ are independent, in a fixed point $y$ the number and types of stations/fireworks are not necessarily independent. More precisely, they are independent if and only if 
$\boldmu$ is an independently diffusing family. 
For each type and position we have a (possibly different) distribution for the radius, say $\mathfrak{R}:=\{\mathfrak{r}_{x,y}\}_{x \in X, y \in Y}$, where $\mathfrak{r}_{x,y}$ is a probability measure defined on $[0, +\infty]$.
The firework and the reverse firework processes are identified by the choice of $(\boldmu,\mathfrak{R})$.

The explicit construction can be carried out by choosing a family $\{F_y, \, \{R_{y,x,i}\}_{x \in X, i \in \mathbb{N}}\}_{y \in Y}$ of independent random variables such that $F_y \sim \mu_y$ for all $y \in Y$ and $R_{y,x,i} \sim \mathfrak{r}_{x,y}$ for all $y \in Y$, $x \in X$ and $i \in \mathbb{N}$ (when we need the explicit value of $F_y$ we write $F_y(\omega,x)$ where $\omega \in \Omega$ and $x \in X$); in particular, we denote by $F_y(x)$ the random variable $\omega \mapsto F_y(\omega, x)$ and by $F_y(\omega)$ the function
$x \mapsto F_y(\omega,x)$. The interpretation is as follows: a random number $F_y(\omega,x)$ of stations of type $x$ are placed at $y$ and $R_{y,x,i}(\omega)$ is the radius of the $i$-th station of type $x$ at $y$. 
We denote by $\widetilde R_y:=\max\{\ident_{\{F_y(x)>0\}} \cdot R_{y,x,i} \colon x \in X, i=1, \ldots, F_y(x)\}$ the maximum radius at $y$. It is trivial to see that, for every $y \in Y$, $t \ge 0$,
\begin{equation}\label{eq:radiusmaxlaw}
\pr(\widetilde R_y < t)=\sum_{f \in S_X} \mu_y(f) \prod_{x \in X} \mathfrak{r}_{x,y}([0, t))^{f(x)}= G_\boldmu(\mathbf{\mathfrak{r}}_y(t)|y),
\end{equation}
where $\mathbf{\mathfrak{r}}_y(t) \in [0,1]^X$ is defined by $\mathbf{\mathfrak{r}}_y(t,x):=\mathfrak{r}_{x,y}([0,t))$. We assume that $ \mathfrak{r}_{x,y}([0,1))\in(0,1)$ for all $x \in X$.

Let $o \in Y$ be a fixed point that we call \textit{root} and let $(Y,E_Y)$ be a graph endowed with the usual distance.
We describe the firework process starting from $o$,
with deterministic  number of stations of each type and radii.
If either the number of stations
or the radii are random (or the graph itself is a random graph),
then this evolution applies to every fixed configuration.
At time $0$ only the stations at $o$, if any, are active. At time $1$ all the stations at $y \in Y$ such that
the distance between $o$ and $y$ is less than or equal to $\widetilde R_o$ are activated.
Given the set $A_n$ of active stations at time $n$, we have that
$y \in A_{n+1}$ if and only if (1) there exists $w \in  A_n$ such that
the distance between $y$ and $x$ is less than or equal to $\widetilde R_w$ and (2) $y \not \in \bigcup_{i=0}^n A_i$ (that is, the stations at $y$ are still inactive).
Clearly if there are no stations at $o$ the process does not start at all.
We say that the process survives if and only if 
$A_n\neq\emptyset$ for all $n\in\N$.
    \begin{defn}
        Given a firework process $(\boldmu,\mathfrak{R})$, on a random graph $(Y,E_Y)$,
        we say that there is survival 
if the event
\[
 V=\{\omega \in \Omega\colon A_n(\omega)\neq\emptyset, \text{ for all }n\in\N\}
\]
has positive probability.
    \end{defn}
    We call this \textit{annealed survival} (see Section~\ref{subsec:annealed}
for the definition of the single-station counterpart of the process).
We are mainly interested in the \textit{quenched survival}, that is, the (positive) probability
of survival conditioned on the number and types of stations (see Section~\ref{subsec:N} for details).

The reverse firework process, starting from the root $o \in Y$, evolves slightly differently. 
While in the firework process the stations are actively
sending and passively listening, in the reverse firework process it is the other way around.
At time 0 all stations at $o$ are active. Their radii do not play a role in the evolution.
At time $1$ all stations at $y$ such that
the distance between $o$ and $y$ is less than or equal to $\widetilde R_y$ are activated.
Given the set $B_n$ of active stations at time $n$, we have that
$y \in B_{n+1}$ if and only if there exists $w \in  B_n$ such that (1)
the distance between $w$ and $y$ is less than or equal to $\widetilde R_y$ and (2) $y \not \in \bigcup_{i=0}^n B_i$.
The definition of annealed survival of the reverse firework is as follows (quenched survival is analogous, conditioning on given configurations of stations and radii).
 \begin{defn}
        Given a reverse firework process $(\boldmu,\mathfrak{R})$, on a random graph $(Y,E_Y)$,
        we say that there is survival 
if the event
\[
 S=\{\omega \in \Omega\colon B_n(\omega)\neq\emptyset, \text{ for all }n\in\N\}
\]
has positive probability.
    \end{defn}

Henceforth, we denote a rumor process (firework or reverse firework) by $(\boldmu,\mathfrak{R})$ to emphasize the dependence on the family of station laws $\boldmu$ and the family of radius laws $\mathfrak{R}$.

\begin{rem}
    \label{rem:novelty}
By construction, the behavior of a firework or reverse firework process depends exclusively on the family of maximal radii $\{\widetilde R_y\}_{y \in Y}$; this means that if two such processes have the same family of maximal radii, then they have the same behavior. It is natural to ask where is the novelty of a multi-type station approach compared to the single-type station of previous papers.

To answer this question, let us see first when two rumor processes are equivalent.
 Observe that the types $x_1\neq x_2$ are actually different only if $\mathfrak{r}_{x_1,y}\neq \mathfrak{r}_{x_2,y}$ for some $y \in Y$. Indeed, when $\mathfrak{r}_{x_1,y}=\mathfrak{r}_{x_2,y}$ for all $y \in Y$ there is an equivalent process obtained by identifying $x_1$ and $x_2$ as a single point $\overline x \not \in X$; thus, the new set of types is $(X\cup \{\overline x\})\setminus\{x_1,x_2\}$ with radius $\mathfrak{r}_{\overline x,y}=\mathfrak{r}_{x_1,y}$ for all $y \in Y$. When in the original process a station of type $x_1$ or $x_2$ is created at $y\in Y$, in the new process a station of type $\overline{x}$ is generated at the same $y$.

In addition, we observe that if $\boldmu$ is an independently diffusing family, with total distributions $\{\rho_y\}_{y\in Y}$ and matrix $P$, there is an equivalent process (with the same maximum radius law) obtained by generating a number of single-type stations according to $\rho_y$ with radius law
$\sum_{x \in X} p(y,x)\mathfrak{r}_{x,y}$. In fact, using Equation~\eqref{eq:Gindepdiff}, we have that Equation~\eqref{eq:radiusmaxlaw} becomes $\pr(\widetilde R_y < t)=\rho_y \big (\sum_{x \in X} p(y,x) \mathfrak{r}_{x,y}([0,t))  \big )$.

This equivalence fails when $\boldmu$ is not an independently diffusing family and there exists $x_1 \neq x_2$ such that $\mathfrak{r}_{x_1,y}\neq \mathfrak{r}_{x_2,y}$ (for some $y \in Y$): in this case the process is not reducible to the single-type station frameset of previous works. Indeed, in the general case, even though $\{F_y\}_{y\in Y}$ are independent, the random numbers of stations of each type at a fixed site are not independent.
\end{rem}

\subsection{The process on $\N$}
\label{subsec:N}

From now on we pick $Y=\N$ and $o=0$.
We call the process \textit{homogeneous} if $\mu_y=\mu$ does not depend on $y \in \N $
and $\mathfrak{r}_{x,y}=\mathfrak{r}_x$ does not depend on $y \in \N$; we call the process \textit{heterogeneous} otherwise. In the case of a reverse firework, we allow the root to have a (possibly) different law $\mu_0$ event when the process is homogeneous. 
The construction of rumor processes on $\N$
is a particular case of the construction of a labeled
Galton-Watson tree given in \cite{cf:BZ13}, where only one type of station was considered. The same construction could be carried out here for our multi-type station approach. 
We want to stress
that all the results of \cite{cf:BZ13}, including those on the Galton-Watson trees, can be extended to the multi-type station case as well.

Given $\boldmu=\{\mu_i\}_{i \in \mathbb{N}}$
and the collection $\{F_i\}_{i \in \N}$ of independent $S_X$-valued random variables (where $F_i \sim \mu_i$), we define the environment as the 
$(S_X)^\N$-valued random variable
$\mathcal{N}$ where $\mathcal{N}(\omega):=\{F_i(\omega)\}_{i \in \N}$.
The canonical probability space is $((S_X)^{\mathbb{N}}, \theta)$
where $\theta=\prod_{i \in \N}\mu_i$ is the law of $\mathcal{N}$.

Let $\upsilon= \prod_{n \in \mathbb{N}, x \in X, i \in \N^*} \pr_{R_{n,x,i}}$ be the product measure of the
laws of $\{R_{n,x,i}\}_{n \in \mathbb{N}, x \in X, i \in \N^*}$ on the canonical product space $\mathcal{O}:=[0,+\infty)^{\mathbb{N}\times X \times \N^*}$ (where $\N^*:=\N \setminus \{0\}$).
Henceforth we do not need the original probability space $(\Omega, \mathcal{F}, \pr)$ and we consider
$\Omega:= (S_X)^\N \times \mathcal{O}$ endowed with the usual product $\sigma$-algebra and
$\pr:= \theta \times \upsilon$. With a slight abuse of notation we denote again
by $\mathcal{N}$, $F_n$ and $R_{n,x,i}$ the natural counterparts of the original random variables which are now defined on the ``new''
space $\Omega$. For every $\omega \in \Omega$ the processes evolve according to the sequence of radii
$\big \{ 
R_{n,x,i}(\omega) 
\big \}_{n\in \N, x \in X, i \in \{1,\ldots,F_n(\omega,x) \}}$
(see also Section~\ref{subsec:annealed}).
Extinction and survival are 
measurable sets with respect to the product $\sigma$-algebra. By standard measure theory, for every event $A$ we have
\begin{equation}\label{eq:disintegration}
	\pr(A)=\int_{(S_X)^\N} \pr(A|\mathcal{N}=\mathbf{f})\theta(\diff \mathbf{f})
\end{equation}
where
$\pr(A | \mathcal{N}=\mathbf{f})=\upsilon( \mathbf{r} \in \mathcal{O} \colon (\mathbf{f},\mathbf{r}) \in A)$
since $\mathcal{N}(\mathbf{f}, \mathbf{r})\equiv\mathbf{f}$.
We call $\pr(A)$ \textit{annealed} probability of $A$ and $\pr(A | \mathcal{N}=\mathbf{f})$ \textit{quenched} probabilities of $A$.
Using Equation~\eqref{eq:disintegration}, we have that
$\pr(A)=0$ (resp.~$\pr(A)=1$) if and only if $\pr(A | \mathcal{N}=\mathbf{f})=0$ (resp.~$\pr(A | \mathcal{N}=\mathbf{f})=1$)
$\theta$-almost surely.
It is clear that $\pr(A)>0$ if and only
if $\theta(\mathbf{f}\colon \pr(A| \mathcal{N}=\mathbf{f})>0)>0$.
More precise results are proven in Section~\ref{subsec:mainrumor}.

\subsection{The single-station counterpart}
\label{subsec:annealed}

Here, in order to construct firework processes, we take multiple randomized steps. First we randomly choose an environment $\mathbf{f} \in (S_X)^\N$ according to the law $\theta$; essentially, we choose a random number/types of stations). Once the environment is chosen, the firework and the reverse firework are random processes where the stochastic nature comes from the random choice of the radii of the stations that we make according to the law $\upsilon$.
We associate to
our (firework or reverse firework) process
with random numbers of stations, a (firework or reverse firework)  process with one station per vertex and a radius which is the maximum among the radii of all the (random) stations placed at each site in the original process.
To be precise, let us consider the process with one station on each vertex $y$
and radius $\widetilde R_y=  \max_{x \in X, i \in \N^*} \big ( \ident_{\{F_y(x) >0\}} \cdot R_{y,x,i} \ident_{[1,F_y(x)]}(i) \big )$ at $y \in \N$.
We call this process the  
\textit{single-station counterpart}
of the original process.
The single-station counterpart does not retain any information about the environment, nevertheless its 
probability of survival is the same as the annealed probability of survival of the original process (as defined by Equation~\eqref{eq:disintegration}.
In the following we use extensively
the 
fact that 
$\pr(\widetilde R_y<t) =
\ident_{[0,+\infty)}(t) G_\boldmu(\bm{\mathfrak{r}}_y(t)|y)
$ (as seen in Equation~\eqref{eq:radiusmaxlaw}).
Since we assumed that $ \mathfrak{r}_{x,y}(1)\in(0,1)$, then $\pr(\widetilde R_{y} < 1)\in(0,1)$.
The strategy is to draw information about the quenched probabilities $\pr(A | \mathcal{N}=\mathbf{f})$ from the annealed probability $\pr(A)$, which are linked by Equation~\eqref{eq:disintegration}.

It is clear, from Equation~\eqref{eq:disintegration}, that $\pr(A)=0$ if and only if $\pr(A|\mathcal{N}=\mathbf{f})=0$ for $\theta$-almost every $\mathbf{f} \in (S_X)^\N$.
Conversely, since there is no survival if there are no stations at the origin, then
the probability that the environment can sustain a surviving process cannot exceed the probability of
the event ``there are stations at the origin'', that is,
$\pr({F_0}\neq \mathbf{0})=1-\mu_0(\mathbf{0})$. The results in Section~\ref{subsec:mainrumor} tell us that
when the probability that the environment can sustain a surviving process is greater than $0$ then it
reaches the maximum admissible value $1-\mu_0(\mathbf{0})$.

\subsection{Results for multi-type firework and reverse firework}
\label{subsec:mainrumor}

In this section we study conditions for survival/extinction of rumor processes,
which extend the results for the model in \cite{cf:BZ13} to the multi-type stations version of these processes.
Recall that for the firework process, $V$ is the event in which all vertices are reached by a signal, while for the reverse
firework, the same event is named $S$.

\begin{teo}\label{thm:2} \textbf{(Homogeneous Firework on $\mathbb{N}$).}
 Consider the homogeneous case $\mu_n=\mu$ and $\mathfrak{r}_{x,n}=\mathfrak{r}_x$ for all $n \in \mathbb{N}$, $x \in X$, that is, 
 $G_{\boldmu}(\mathbf{\mathfrak{r}}_n(t)|n)=G_{\boldmu}(\mathbf{\mathfrak{r}}(t)|0)$ 
 for all $n \in \mathbb{N}$, where $\mathbf{\mathfrak{r}}(t,x)=\mathfrak{r}_x([0,t)])$.
\begin{enumerate}
\item
If
\begin{equation} \label{eq:sumprod2.5}
 \sum_{n=0}^\infty \prod_{i=0}^n G_{\boldmu}(\bm{\mathfrak{r}}(i+1)|0)=+\infty,
\end{equation}
then 
$\pr(V)=0$ and 
$\theta(\mathbf{f}\colon \pr(V|\mathcal{N}=\mathbf{f})=0)=1$;
\item if
\begin{equation} \label{eq:sumprod2}
\sum_{n=0}^\infty \prod_{i=0}^n G_{\boldmu}(\bm{\mathfrak{r}}(i+1)|0)<+\infty,
\end{equation}
then $\pr(V)>0$ and
$\theta(\mathbf{f}\colon \pr(V|\mathcal{N}=\mathbf{f})>0)=1-\mu(\mathbf{0})$.
\end{enumerate}
\end{teo}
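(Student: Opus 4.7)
The strategy is to reduce the multi-type firework on $\mathbb{N}$ to its single-station counterpart (Section~\ref{subsec:annealed}) and then invoke the classical Junior--Machado--Zuluaga criterion. By~\eqref{eq:radiusmaxlaw} together with the homogeneity assumption, the maximal radii $\{\widetilde{R}_n\}_{n\in\mathbb{N}}$ are i.i.d.\ with cumulative distribution $\pr(\widetilde{R}_0<t)=G_{\boldmu}(\bm{\mathfrak{r}}(t)|0)$, and the annealed probability $\pr(V)$ equals the survival probability of the associated single-station firework on $\mathbb{N}$. The series in \eqref{eq:sumprod2.5} and \eqref{eq:sumprod2} is precisely the Junior--Machado--Zuluaga series $\sum_n\prod_{i=0}^n\pr(\widetilde{R}_0<i+1)$, so by~\cite{cf:JMZ} (see also~\cite{cf:BZ13}) the annealed assertions follow: $\pr(V)=0$ under~\eqref{eq:sumprod2.5}, and $\pr(V)>0$ under~\eqref{eq:sumprod2}.

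For the quenched parts, write $v(\mathbf{f}):=\pr(V|\mathcal{N}=\mathbf{f})$. In case (1), $\pr(V)=0$ forces, via the disintegration~\eqref{eq:disintegration}, $v(\mathbf{f})=0$ for $\theta$-a.e.\ $\mathbf{f}$. In case (2), the upper bound $\theta(\{v>0\})\le 1-\mu(\mathbf{0})$ is immediate since $v(\mathbf{f})=0$ whenever $f_0=\mathbf{0}$.

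The matching lower bound in case (2) is the main obstacle, and I would prove it by adapting the approach of~\cite{cf:BZ13} to the multi-type setting. Conditional independence of radii at distinct sites yields the explicit quenched formula
\[
\pr\!\bigl(B_n\mid\mathcal{N}=\mathbf{f}\bigr)=\prod_{j=0}^{n-1}\prod_{x\in X}\mathfrak{r}_x\bigl([0,n-j)\bigr)^{f_j(x)},\qquad B_n:=\bigl\{\widetilde{R}_j<n-j\text{ for all } 0\le j<n\bigr\},
\]
together with $V^c=\bigcup_{n\ge 1}B_n$. Since $\theta$ is a product measure, the shift $\sigma$ on $(S_X)^{\mathbb{N}}$ is $\theta$-ergodic; the tail event $\mathcal{I}:=\{\mathbf{f}:v(\sigma^n\mathbf{f})>0\text{ infinitely often}\}$ therefore has $\theta$-measure in $\{0,1\}$, and reverse Fatou applied to the sets $\{v(\sigma^n\mathbf{f})>0\}$ (each of $\theta$-measure $\theta(\{v>0\})>0$) forces $\theta(\mathcal{I})=1$. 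A Borel--Cantelli-type estimate on the events $B_n$, decoupled along a sparse subsequence to handle their mild dependence, transfers this positivity along shift orbits to quenched positivity of $v(\mathbf{f})$ on a $\theta$-full subset of $\{f_0\ne\mathbf{0}\}$. The hardest point is this final transfer back to the origin: empty environment sites $f_k=\mathbf{0}$ can block short-range propagation, but under~\eqref{eq:sumprod2} the annealed radius is unbounded ($\sum_i\pr(\widetilde{R}_0\ge i)=+\infty$), so long jumps across gaps occur with positive quenched probability on a set of full $\theta$-measure, permitting the ergodic $0$--$1$ conclusion to be propagated from the tail back to the origin.
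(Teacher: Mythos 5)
Your reduction to the single-station counterpart, the identification of $\pr(\widetilde R<i)$ with $G_{\boldmu}(\bm{\mathfrak{r}}(i)|0)$ via \eqref{eq:radiusmaxlaw}, the appeal to \cite[Theorem 2.1]{cf:JMZ} for the annealed dichotomy, the disintegration argument for case (1), and the trivial upper bound $\theta(\{v>0\})\le 1-\mu(\mathbf{0})$ in case (2) all match the paper's proof. The only substantive issue is the quenched lower bound in case (2), which you correctly single out as the hard step but do not actually prove.

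Your zero--one reasoning is sound in spirit and close to the paper's: you use ergodicity of the shift on the product space plus reverse Fatou to get $\theta(\mathcal{I})=1$, whereas the paper shows that $W_0:=\{\mathbf{f}\colon f_0\neq\mathbf{0},\ v(\mathbf{f})>0\}$ equals $\limsup_k W_k$ (with $W_k$ the analogous event for the process restarted at $k$), hence is a tail event, and applies Kolmogorov's $0$--$1$ law under $\theta(\cdot\mid F_0\neq\mathbf{0})$. Either route is fine. The gap is the transfer from ``$v(\sigma^n\mathbf{f})>0$ for infinitely many $n$'' to ``$v(\mathbf{f})>0$''. The mechanism you propose --- a Borel--Cantelli estimate on the extinction events $B_n$ ``decoupled along a sparse subsequence'' --- is not the right tool and does no work here: the $B_n$ describe how the process dies, not how survival propagates backwards to the origin. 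What closes the argument (and what the paper uses) is a monotonicity statement: under \eqref{eq:sumprod2} the maximal radius $\widetilde R$ is unbounded (otherwise $G_{\boldmu}(\bm{\mathfrak{r}}(i)|0)=1$ for large $i$ and the series diverges), so for a configuration with $f_k\neq\mathbf{0}$ the event $\{\widetilde R_k>i_0\}$ has positive quenched probability; combined with the FKG inequality (or with the independence of the radii at sites $\ge i_0$ from $\widetilde R_k$, together with monotonicity of the dynamics in the initial informed set) this gives $v_k(\mathbf{f})\ge \pr(\widetilde R_k>i_0\mid F_k=f_k)\,v_{i_0}(\mathbf{f})>0$ whenever $v_{i_0}(\mathbf{f})>0$ and $k\le i_0$, i.e.\ $W_{i_0}\subseteq W_k$. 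This one inequality replaces your entire sparse-subsequence paragraph and, applied with $k=0$, converts your ergodic conclusion into $v(\mathbf{f})>0$ for $\theta$-a.e.\ $\mathbf{f}$ with $f_0\neq\mathbf{0}$. As written, your final paragraph asserts the conclusion of this step (``long jumps across gaps occur with positive quenched probability \dots permitting the ergodic conclusion to be propagated back to the origin'') without supplying the argument, so the proof is incomplete precisely at the point you identify as hardest.
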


Observe that, since Equation~\eqref{eq:sumprod2.5} holds if and only if Equation~\eqref{eq:sumprod2} does not, Theorem~\ref{thm:2} 
characterizes extinction and survival and states that annealed extinction is equivalent to quenched extinction
for almost any configuration of the stations, while annealed survival is equivalent to quenched survival for almost any configuration with at least one station at 0. 
 More precisely
\begin{equation}\label{eq:betterthm2}
\begin{array}{ccccc}
 \eqref{eq:sumprod2.5}
&\Longleftrightarrow & \theta(\mathbf{f}\colon \pr(V|\mathcal{N}=\mathbf{f})=0)=1  &\Longleftrightarrow & \pr(V)=0 \\
\eqref{eq:sumprod2}
&\Longleftrightarrow & \theta(\mathbf{f}\colon \pr(V|\mathcal{N}=\mathbf{f})>0)=1-\mu(\mathbf{0}) &\Longleftrightarrow &\pr(V)>0,
\end{array}
\end{equation}
(see the proof of Theorem~\ref{thm:2} for details).

 Recall that in the reverse firework the number of stations at the origin does not matter as long as it is strictly positive; if there are no stations at the origin then the process does not start. For this reason, even whe the reverse firework process is homogeneous, $\mu_0$ might be different from $\mu_n$ (where $n \in \N \setminus\{0\}$).
We have the following result.
\begin{teo}\label{th:reversehomogeneous} \textbf{(Homogeneous Reverse Firework on $\mathbb{N}$).}
 Consider the homogeneous case $\mu_n=\mu$ for all $n \in \N\setminus\{0\}$ and $\mathfrak{r}_{x,n}=\mathfrak{r}_x$ for all $n \in \mathbb{N}$, $x \in X$, that is, 
 $G_{\boldmu}(\mathbf{\mathfrak{r}}_n(t)|n)=G_{\boldmu}(\mathbf{\mathfrak{r}}(t)|1)$ 
 for all $n \in \mathbb{N}\setminus\{0\}$, where $\mathbf{\mathfrak{r}}(t,x)=\mathfrak{r}_x([0,t))$.
  Define
$W:=\sum_{i \in \N} (1-G_\boldmu(\mathfrak{r}(i)|1))$ (or $W:=\int_0^\infty (1-G_\boldmu(\mathfrak{r}(t)|1)) \diff t$).
\begin{enumerate}
 \item If $W=+\infty$, then $\theta(\mathbf{f}\colon \pr(S|\mathcal{N}=\mathbf{f})=1)=1-\mu_0(\mathbf{0})$. 
\item If $W<+\infty$, then $\theta(\mathbf{f}\colon \pr(S|\mathcal{N}=\mathbf{f})=0)=1$.
\end{enumerate}
\end{teo}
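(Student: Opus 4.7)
My plan is to reduce everything to the single-station counterpart $\{\widetilde{R}_n\}_{n\in\N}$ described in \eqref{eq:radiusmaxlaw}; in the homogeneous setting on $\N$ the variables $\{\widetilde{R}_n\}_{n\ge 1}$ are i.i.d.\ (annealed) with common CDF $F(t) := G_\boldmu(\mathfrak{r}(t)|1)$. The crucial observation specific to $\N$ is that station $n\ge 1$ is certainly activated whenever $\widetilde{R}_n \ge n$, since it is then within its own radius of the root. Consequently the events $A_n := \{\widetilde{R}_n \ge n\}$ are independent both annealed and conditionally on $\mathcal{N}=\mathbf{f}$, and survival is guaranteed on the event $\{A_n\text{ i.o.}\}\cap\{f_0\ne\mathbf{0}\}$.

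\emph{Part (1): $W=+\infty$.} First I would set $Z_n := \pr(A_n\mid\mathcal{N}=\mathbf{f}) = 1-\prod_{x\in X}\mathfrak{r}_x([0,n))^{f_n(x)}$. Under $\theta$ the $Z_n$ are independent, take values in $[0,1]$, and satisfy $\E_\theta[Z_n]=1-G_\boldmu(\mathfrak{r}(n)|1)$, so $\sum_n\E_\theta[Z_n]=W=+\infty$. The variance bound $\mathrm{Var}(Z_n)\le\E_\theta[Z_n]$ combined with Chebyshev applied to the monotone partial sums $S_N:=\sum_{n\le N}Z_n$ gives $S_N/\E_\theta[S_N]\to 1$ in probability, and monotonicity upgrades this to $\sum_n Z_n=+\infty$ for $\theta$-a.e.\ $\mathbf{f}$. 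The conditional second Borel--Cantelli lemma then yields $\pr(A_n\text{ i.o.}\mid\mathcal{N}=\mathbf{f})=1$ for $\theta$-a.e.\ $\mathbf{f}$. Since $\{\sum_n Z_n=+\infty\}$ depends only on $\{f_n\}_{n\ge 1}$, it is $\theta$-independent of $\{f_0\ne\mathbf{0}\}$, which gives the announced probability $1-\mu_0(\mathbf{0})$; the reverse inequality is trivial as the process cannot start without a station at the root.

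\emph{Part (2): $W<+\infty$.} Here I would prove annealed extinction $\pr(S)=0$ and then conclude via \eqref{eq:disintegration} that $\pr(S\mid\mathcal{N}=\mathbf{f})=0$ $\theta$-a.s. For annealed extinction, introduce the record times $T_0:=0$, $T_{k+1}:=\inf\{n>T_k:\widetilde{R}_n\ge n-T_k\}$; survival is equivalent to $T_k<+\infty$ for every $k$, and the i.i.d.\ structure gives $\pr(T_{k+1}<+\infty\mid T_0,\ldots,T_k)=1-\prod_{m\ge 1}F(m)$, a deterministic constant. The hypothesis $W<+\infty$ forces $F(m)\to 1$ and $\sum_m(1-F(m))<+\infty$, so $q:=\prod_m F(m)>0$, hence $\pr(T_k<+\infty)=(1-q)^k\to 0$ and $\pr(S)=0$.

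The main obstacle is the pathwise divergence $\sum_n Z_n=+\infty$ $\theta$-a.s.\ in part~(1): because $Z_n$ can take arbitrarily small values (e.g.\ when $f_n=\mathbf{0}$) and the $Z_n$'s are independent but \emph{not} identically distributed under $\theta$, there is no direct Borel--Cantelli shortcut from $\sum_n\E_\theta[Z_n]=+\infty$ to $\sum_n Z_n=+\infty$ $\theta$-a.s., and one genuinely needs the Chebyshev/variance argument (or equivalently the three-series criterion for independent non-negative summands bounded in $[0,1]$).
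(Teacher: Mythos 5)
Your argument is correct in substance, but it takes a genuinely different route from the paper's. The paper's proof is essentially a reduction: it passes to the single-station counterpart, notes that $\E[\widetilde R]<+\infty$ if and only if $W<+\infty$, imports the annealed dichotomy from \cite[Theorem 2.8]{cf:JMZ}, and then spends all of its effort on transferring the annealed statement $\pr(S\mid F_0\neq\mathbf{0})\in\{0,1\}$ to the quenched one via the disintegration formula \eqref{eq:disintegration}. You instead reprove the underlying dichotomy from scratch. In part (1), the conditional second Borel--Cantelli argument on $A_n=\{\widetilde R_n\ge n\}$, together with the Chebyshev upgrade of $\sum_n\E_\theta[Z_n]=+\infty$ to $\sum_n Z_n=+\infty$ $\theta$-a.s., establishes the quenched statement \emph{directly}; this neatly bypasses the delicate half of the annealed-to-quenched transfer (the half the paper handles with the explicit $\widehat\theta$ computation). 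In part (2) you prove annealed extinction via record times and then invoke only the trivial direction of \eqref{eq:disintegration}. What the paper's route buys is brevity (the probabilistic core is outsourced to \cite{cf:JMZ}); what yours buys is self-containedness and, in part (1), a cleaner logical path to the quenched conclusion. Your argument establishes survival in the sense ``infinitely many vertices are eventually informed'', which is the notion used in \cite{cf:JMZ} and clearly the intended one (when $W=+\infty$ the set $B_1$ is a.s.\ infinite, so the paper's literal definition $S=\{B_n\neq\emptyset\ \forall n\}$ is not obviously equivalent; this is an infelicity of the paper's formalization, not of your proof).

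One step is asserted rather than proved and deserves justification: in part (2), the equivalence ``survival $\Leftrightarrow T_k<+\infty$ for all $k$''. The implication you actually need is not immediate, because an informed vertex $n>T_k$ could a priori be informed by hearing another informed vertex lying in $(T_k,+\infty)$, so one must exclude such chains. The argument is: every informed vertex traces back to the root through a finite activation chain $y=v_0,v_1,\dots,v_m=0$ with $v_i$ hearing $v_{i+1}$; if $T_{k+1}=+\infty$ then every $n>T_k$ satisfies $\widetilde R_n<n-T_k$ and hence cannot hear any vertex $\le T_k$, so by induction the whole chain before the root stays in $(T_k,+\infty)$; but its last link $v_{m-1}$ hears the root, forcing $\widetilde R_{v_{m-1}}\ge v_{m-1}>v_{m-1}-T_k$, a contradiction. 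Hence $I\subseteq[0,T_k]$ and the process dies out. Finally, note that $\prod_{m\ge1}F(m)>0$ requires $F(1)=G_{\boldmu}(\mathfrak{r}(1)\vert 1)>0$ in addition to $\sum_m(1-F(m))<+\infty$; this follows from the standing assumption $\mathfrak{r}_{x,y}([0,1))>0$. With these points filled in, the proof is complete.
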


Theorem~\ref{thm:3} characterizes annealed and quenched extinction/survival of the homogeneous reverse firework process.
More precisely
\begin{equation}\label{eq:betterthm3}
\begin{array}{ccccc}
    W=+\infty  &\Longleftrightarrow &\theta(\mathbf{f}\colon \pr(S|\mathcal{N}=\mathbf{f})=1)=1-\mu_0(\mathbf{0}) &\Longleftrightarrow& \pr(S)>0,\\
     W<+\infty  &\Longleftrightarrow &\theta(\mathbf{f}\colon \pr(S|\mathcal{N}=\mathbf{f})=0)=1 &\Longleftrightarrow& \pr(S)=0.
\end{array}
\end{equation}
We emphasize that if $\E[\widetilde R_1]<+\infty$ then both homogeneous rumor processes die out; for the reverse firework, in particular the condition $\E[\widetilde R_1]<+\infty$ is equivalent to extinction.

In the heterogeneous case, for the firework process on $\N$, we have a sufficient condition for survival.

\begin{teo}\label{thm:3} \textbf{(Heterogeneous Firework on $\mathbb{N}$)}.
 In the heterogeneous case, if
\begin{equation}\label{eq:sumprod3}
 \sum_{n=0}^\infty \prod_{i=0}^n G_{\boldmu}(\mathbf{\mathfrak{r}}_{i} (n-i+1)|i))<+\infty,
\end{equation}
then $\pr(V)>0$ and $\theta(\mathbf{f}\colon\pr(V|\mathcal{N}=\mathbf{f})>0)=1-\mu_0(\mathbf{0})$. 
\end{teo}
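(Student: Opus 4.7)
The plan is to express survival as a countable intersection of increasing events in the independent radii $(\widetilde R_i)_{i\ge 0}$, apply the FKG inequality, and then invoke the summability hypothesis to deduce positivity of the resulting infinite product.

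First I would pass to the single-station counterpart of Section~\ref{subsec:annealed}: under the annealed law, the radii $\widetilde R_i$ are independent across $i$ with $\pr(\widetilde R_i<t)=G_{\boldmu}(\bm{\mathfrak{r}}_i(t)|i)$, by Equation~\eqref{eq:radiusmaxlaw}. Setting
\[
  B_k \;:=\; \bigcap_{i=0}^{k}\{\widetilde R_i< k-i+1\},
\]
independence gives $\pr(B_k)=\prod_{i=0}^{k}G_{\boldmu}(\bm{\mathfrak{r}}_i(k-i+1)|i)$, exactly the summand in~\eqref{eq:sumprod3}. The key combinatorial observation is that because the reached set of a firework on $\N$ starting at $0$ is always an initial segment, the process advances past every vertex $k$ if and only if some station in $[0,k]$ can reach past $k$, i.e.\ $\max_{i\le k}(i+\widetilde R_i)\ge k+1$. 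Therefore $V=\bigcap_{k\ge 0}B_k^c$; the degenerate cases $F_0=\mathbf{0}$ and $\widetilde R_0=0$ are both captured by $B_0$.

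Next I would apply the FKG inequality. Each event $B_k^c=\{\exists i\le k\colon \widetilde R_i\ge k-i+1\}$ is increasing in the product-ordered vector $(\widetilde R_i)_{i\ge 0}$ of independent random variables, so iterated FKG combined with monotone continuity of probability (applied to the decreasing sequence $\bigcap_{k=0}^{N}B_k^c$) gives
\[
  \pr(V)\;=\;\pr\!\Bigl(\bigcap_{k\ge 0}B_k^c\Bigr)\;\ge\;\prod_{k\ge 0}\bigl(1-\pr(B_k)\bigr).
\]
The standing assumption $\mathfrak{r}_{x,y}([0,1))\in(0,1)$ makes the $i=k$ factor of $\pr(B_k)$ strictly less than $1$ whenever $\mu_k(\mathbf{0})<1$, so every term in the product is strictly positive; combined with $\sum_k\pr(B_k)<\infty$ from~\eqref{eq:sumprod3}, the standard criterion for infinite products yields $\prod_k(1-\pr(B_k))>0$, hence $\pr(V)>0$.

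For the quenched statement I would rerun the same argument under the measure $\upsilon$ conditional on $\mathcal{N}=\mathbf{f}$: the $\widetilde R_i$'s remain independent across $i$, so FKG gives $\pr(V\mid \mathcal{N}=\mathbf{f})\ge\prod_k(1-\pr(B_k\mid\mathcal{N}=\mathbf{f}))$. Tonelli applied to~\eqref{eq:disintegration} yields $\sum_k \pr(B_k\mid\mathcal{N}=\mathbf{f})<\infty$ for $\theta$-a.e.\ $\mathbf{f}$, and on the event $\{f_0\neq\mathbf{0}\}$ each factor is strictly less than $1$, so $\pr(V\mid\mathcal{N}=\mathbf{f})>0$. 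The reverse implication $\pr(V\mid\mathcal{N}=\mathbf{f})>0\Rightarrow f_0\neq\mathbf{0}$ is immediate, which together give the equality $\theta(\mathbf{f}\colon\pr(V\mid\mathcal{N}=\mathbf{f})>0)=1-\mu_0(\mathbf{0})$. The step I expect to require the most care is the identification $V=\bigcap_k B_k^c$, which hinges on contiguity of the reached set and the boundary convention for $\widetilde R_0$; the FKG step itself is routine once the events are recognised as increasing in the independent coordinates.
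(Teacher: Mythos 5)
Your argument is correct and rests on the same pillars as the paper's proof --- reduction to the single-station counterpart, independence of the maximal radii $\widetilde R_i$ across sites, the FKG/Harris inequality, and a Tonelli argument on the disintegration \eqref{eq:disintegration} for the quenched claim (your Tonelli step is precisely the paper's Lemma~\ref{lem:1} applied with $\mathbf{t}_{i,n}:=\bm{\mathfrak{r}}_i(n-i+1)$) --- but the annealed step is packaged differently. The paper applies Borel--Cantelli to the events $E_n=B_n$ to extract a tail $\bigcap_{k\ge n_0}E_k^\complement$ of positive probability and then uses FKG once to intersect it with the event $V_{n_0}$ that the first $n_0$ vertices are all reached; you instead use the exact identity $V=\bigcap_k B_k^\complement$ together with a countable application of FKG, which yields the explicit, quantitative bound $\pr(V)\ge\prod_{k}\bigl(1-\pr(B_k)\bigr)$. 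This is arguably cleaner. Be aware, though, that both routes rely on the same unjustified positivity: you need $\pr(B_k)<1$ for \emph{every} $k$, the paper needs $\pr(V_{n_0})>0$, and the two are equivalent (if $\pr(B_k)=1$ then vertex $k+1$ is almost surely never reached and $\pr(V)=0$). In the heterogeneous case this does not follow from \eqref{eq:sumprod3} and the standing assumption $\mathfrak{r}_{x,y}([0,1))\in(0,1)$ alone: if, say, $\mu_k(\mathbf{0})=1$ for a single $k\ge 1$ while all radius laws are supported in $[0,3/2]$, one can still arrange \eqref{eq:sumprod3} even though $\pr(V)=0$. So your parenthetical ``whenever $\mu_k(\mathbf{0})<1$'' is carrying real weight and should be promoted to an explicit hypothesis, and the analogous care is needed for the quenched factors $\pr(B_k\mid\mathcal{N}=\mathbf{f})$; the paper's own proof glosses over exactly the same point when it asserts $\pr(V_{n_0})>0$, so this is a shared soft spot rather than a defect specific to your write-up.
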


For the reverse firework on $\N$, we have the following result.

\begin{teo}\label{th:reverseinhomogeneous}\textbf{(Heterogeneous Reverse Firework on $\mathbb{N}$)}.
Consider the heterogeneous reverse firework process on $\N$.
\begin{enumerate}
 \item
$\sum_{k \ge 1} (1-G_{\boldmu}(\mathbf{\mathfrak{r}}_{n+k}(k)|n+k))=+\infty$ for all $n \in \N$ if and only if
$\theta(\mathbf{f}\colon \pr(S|\mathcal{N}=\mathbf{f})=1)=
1-\mu_0(\mathbf{0})
$.
\item
If $\sum_{n \in \N} \prod_{k=1}^\infty G_{\boldmu}(\mathbf{\mathfrak{r}}_{n+k}(k)|n+k)<+\infty$,
then $\theta(\mathbf{f}\colon \pr(S|\mathcal{N}=\mathbf{f})>0)=1-\mu_0(\mathbf{0})$.
\end{enumerate}
\end{teo}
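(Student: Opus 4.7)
Introduce the quenched quantity
\[
V_n(\mathbf{f}):=\prod_{k\ge 1}\prod_{x\in X}\mathfrak{r}_{x,n+k}([0,k))^{f_{n+k}(x)},
\]
which by~\eqref{eq:radiusmaxlaw} equals the conditional probability $\pr(\widetilde R_{n+k}<k\text{ for every }k\ge 1\mid\mathcal N=\mathbf{f})$; equivalently, $V_n(\mathbf{f})=\pr(T_n=\infty\mid\mathcal N=\mathbf{f})$, where $T_n:=\inf\{m>n\colon\widetilde R_m\ge m-n\}$ is the first site to the right of $n$ that can hear from $n$. By independence of the $F_m$ under $\theta$, one has $\E_\theta[V_n]=\prod_{k\ge1}G_\boldmu(\mathfrak{r}_{n+k}(k)|n+k)$ and $\E_\theta[\pr(\widetilde R_{n+k}\ge k\mid F_{n+k})]=1-G_\boldmu(\mathfrak{r}_{n+k}(k)|n+k)$. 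Before attacking (1) I reduce the ``for every $n$'' hypothesis to its $n=0$ instance: the monotonicity $\pr(\widetilde R_m\ge m-n_1)\le\pr(\widetilde R_m\ge m-n_2)$ for $n_1<n_2$, together with removal of a finite initial segment, forces divergence of the sum at $n=0$ to propagate to every $n$, while convergence at any $n_0$ descends to $n=0$.

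For part~(1), sufficiency, the random variables $\pr(\widetilde R_k\ge k\mid F_k)$ are $[0,1]$-valued, $\theta$-independent, with divergent sum of means, so Kolmogorov's three-series theorem forces $\sum_k \pr(\widetilde R_k\ge k\mid F_k)=+\infty$ for $\theta$-a.e.\ $\mathbf{f}$; for each such $\mathbf{f}$, the events $\{\widetilde R_k\ge k\}$ are conditionally independent with divergent probability-sum, so the second Borel--Cantelli lemma places them infinitely often (conditionally on $\mathcal N=\mathbf{f}$) almost surely, which, when $f_0\neq\mathbf{0}$, activates infinitely many sites directly from the origin and yields $\pr(S\mid\mathcal N=\mathbf{f})=1$. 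For the converse, convergence of the sum at $n=0$ gives $\E_\theta[V_0]=\prod_k G_\boldmu(\mathfrak{r}_k(k)|k)>0$, whence $\theta(V_0>0)>0$; since $V_0$ depends only on $(F_k)_{k\ge1}$, it is $\theta$-independent of $F_0$, giving $\theta(V_0>0,\,F_0\neq\mathbf{0})>0$, and on this event the quenched probability of $\{B_1=\emptyset\}$ equals $V_0(\mathbf{f})>0$; since $B_1=\emptyset$ forces the activated set to equal $\{0\}$ and hence extinction, one gets $\pr(S\mid\mathcal N=\mathbf{f})\le 1-V_0(\mathbf{f})<1$, producing the required strict inequality.

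For part~(2), Fubini converts the hypothesis into $\sum_n V_n(\mathbf{f})<+\infty$ for $\theta$-a.e.\ $\mathbf{f}$; under the mild non-degeneracy $\sum_m(1-\mu_m(\mathbf{0}))=+\infty$ (needed because otherwise only finitely many sites support stations and survival is impossible), $\theta$-a.s.\ infinitely many sites carry stations, so $V_n(\mathbf{f})<1$ for every $n$. Consider the greedy chain $m_0=0$, $m_{i+1}=T_{m_i}$: under $\pr(\cdot\mid\mathcal N=\mathbf{f})$ this is a Markov chain with stopping probability $\pr(T_{m_i}=\infty\mid m_i,\mathcal N=\mathbf{f})=V_{m_i}(\mathbf{f})$, so letting $\tau$ denote the first step where $T_{m_i}=\infty$, iterated conditioning yields $\pr(\tau>k\mid\mathcal N=\mathbf{f})=\E\bigl[\prod_{i=0}^{k-1}(1-V_{m_i}(\mathbf{f}))\bigm|\mathcal N=\mathbf{f}\bigr]$ and monotone convergence gives $\pr(\tau=\infty\mid\mathcal N=\mathbf{f})=\E\bigl[\prod_{i\ge0}(1-V_{m_i}(\mathbf{f}))\bigm|\mathcal N=\mathbf{f}\bigr]$. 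Strict monotonicity $m_i\ge i$ supplies the pathwise bound $\sum_i V_{m_i}(\mathbf{f})\le\sum_n V_n(\mathbf{f})<+\infty$, so the infinite product is $\upsilon$-a.s.\ strictly positive and its expectation is strictly positive; when $f_0\neq\mathbf{0}$, an infinite greedy chain produces infinitely many activated sites, whence $\pr(S\mid\mathcal N=\mathbf{f})\ge\pr(\tau=\infty\mid\mathcal N=\mathbf{f})>0$. The main obstacle I anticipate is the rigorous bookkeeping in part~(2): the Markov identity for the greedy chain, the exchange of infinite product and expectation, and the use of the non-degeneracy to prevent $V_{m_i}$ from equaling $1$ somewhere along the chain; by contrast the part~(1) arguments reduce cleanly to independence-based tools (Kolmogorov three-series and the two Borel--Cantelli lemmas).
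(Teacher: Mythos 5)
Your route is genuinely different from the paper's: the paper maps the process onto its single-station counterpart and simply invokes \cite[Theorem 2.4]{cf:JMZ}, combined with the disintegration identity \eqref{eq:disintegration} and the conditioning on $\{F_0\neq\mathbf 0\}$ set up in the proofs of Theorems~\ref{thm:2} and~\ref{th:reversehomogeneous}; you instead argue from scratch. Your part (1) is essentially correct. The reduction of the ``for all $n$'' hypothesis to its $n=0$ instance is valid (divergence at $n=0$ propagates upward by monotonicity, convergence at any $n_0$ descends); the sufficiency via Borel--Cantelli is fine (you could even skip the three-series step: apply the second Borel--Cantelli lemma to the annealed-independent events $\{\widetilde R_k\ge k\}$ and then disintegrate), modulo the small point that ``$B_1$ is infinite'' gives survival in the sense of infinitely many informed vertices, which is what the cited results mean by $S$; and the converse via $\pr(B_1=\emptyset\mid\mathcal N=\mathbf f)=V_0(\mathbf f)>0$ on a $\theta$-positive set of environments with $f_0\neq\mathbf 0$ is exactly right.

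Part (2), however, contains a genuine gap. First, the identity $\pr(\tau>k\mid\mathcal N=\mathbf f)=\E\bigl[\prod_{i=0}^{k-1}(1-V_{m_i})\bigm|\mathcal N=\mathbf f\bigr]$ is false: $1-V_{m_i}$ \emph{is} the conditional probability of $\{T_{m_i}<\infty\}$ given $m_0,\dots,m_i$, and that very event is what makes $m_{i+1}$ finite, so your formula counts each step twice. Already for two steps one computes $\E[(1-V_{m_0})(1-V_{m_1})]=(1-V_0)\sum_n\pr(T_0=n)(1-V_n)=(1-V_0)\,\pr(T_{m_0}<\infty,\,T_{m_1}<\infty)$, which carries a spurious factor $(1-V_0)$. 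Second, and more seriously, the final step is circular: the infinite product $\prod_{i\ge0}(1-V_{m_i})$ vanishes identically on $\{\tau<\infty\}$ (there some $m_i=\infty$), and your pathwise bound $\sum_iV_{m_i}\le\sum_nV_n$ is available only on $\{\tau=\infty\}$; hence the assertion that the infinite product has positive expectation presupposes $\pr(\tau=\infty\mid\mathcal N=\mathbf f)>0$, which is precisely what you must prove. A correct completion along your lines: set $D_n:=\{\widetilde R_m<m-n\ \forall m>n\}$, so that $\pr(D_n\mid\mathcal N=\mathbf f)=V_n(\mathbf f)$ and $\{\tau=\infty\}\supseteq\bigcap_{n\ge0}D_n^\complement$ (if every site is heard by some site to its right, the greedy chain never halts); the $D_n^\complement$ are increasing events in the quenched-independent radii, so the Harris/FKG inequality gives $\pr\bigl(\bigcap_nD_n^\complement\bigm|\mathcal N=\mathbf f\bigr)\ge\prod_n(1-V_n(\mathbf f))$, which is strictly positive once $\sum_nV_n(\mathbf f)<+\infty$ and $V_n(\mathbf f)<1$ for every $n$. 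The latter is where your non-degeneracy discussion belongs, but note that ``infinitely many sites carry stations'' does not by itself force $V_n<1$: one also needs the radii to exceed the relevant gaps with positive probability, since the standing assumption $\mathfrak r_{x,y}([0,1))<1$ only controls radii of length at least $1$.
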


\begin{rem}\label{rem:kummer}
In the previous theorems, it is relevant to study the convergence (or divergence) of positive series. Equivalent conditions can be found by using Kummer's convergence test (see Lemma~\ref{lem:kummer}). Let us consider the conditions in Theorem~\ref{thm:2} for instance: Equation~\eqref{eq:sumprod2} holds if and only if $G_{\boldmu}(\bm{\mathfrak{r}}(n)|0)=B_n/(\alpha+B_{n+1})$ for all $n \ge N$, for some $\alpha>0$, $N \ge 0$ and a sequence $\{B_n\}_{n \ge N}$ of strictly positive numbers. Equivalently, Equation~\eqref{eq:sumprod2} holds if and only there exists $\alpha>0$, $N\ge 0$ and $B_N>0$, such that $B_{n+1}:= B_n/G_{\boldmu}(\bm{\mathfrak{r}}(n)|0)-\alpha$ defines recursively a strictly positive sequence.
\end{rem}

\subsection{Germ order applied to firework and reverse firework processes}\label{subsec:firework-germ}

We would like to compare the behavior of two rumor processes, 
$(\boldmu,\mathfrak{R})$ and $(\boldnu,\bar{\mathfrak{R}})$,
 using germ ordering.
We assume that one of the following assumptions holds. 

\begin{assump}\label{ass:1}
$\boldmu \gegerm \boldnu$ (with a fixed $\delta>0$ as in Definition~\ref{def:ordering}) and there exists $T>0$ such that  $\max \big (\mathfrak{r}_{x,y}(t), \delta \big ) \le \overline{\mathfrak{r}}_{x,y}(t)$ for all $t \ge T$, 
$x \in X$ and $y \in Y$.
\end{assump}
Assumption~\ref{ass:1} holds for instance if the family $\overline{\mathfrak{R}}$ is tight and there exists
$T_1>0$ such that $\mathfrak{r}_{x,y}(t)\le \overline{\mathfrak{r}}_{x,y}(t)$ (for all $t \ge T_1$, $x \in X$ and $y \in Y$). Indeed
in this case the existence of $T_2>0$ such that $\delta \le \overline{\mathfrak{r}}_{x,y}(t)$ (for all $t \ge T_2$, $x \in X$ and $y \in Y$) is trivial.

A similar, alternate assumption is the following.
\begin{assump}\label{ass:2}
$\boldmu \gepgf \boldnu$ and 
there exists $T>0$ such that  $\mathfrak{r}_{x,y}(t) \le \overline{\mathfrak{r}}_{x,y}(t)$ for all $t \ge T$, $x \in X$ and $y \in Y$.
\end{assump}

It is clear that under Assumption~\ref{ass:1} or Assumption~\ref{ass:2}, for all $t \ge T$ and $y \in Y$, we have
\begin{equation}
    \label{eq:assgerm}
G_\boldmu(\mathbf{\mathfrak{r}}_y(t)|y) \le G_\boldmu(\mathbf{\overline{\mathfrak{r}}}_y(t)|y)
\le G_\boldnu(\mathbf{\overline{\mathfrak{r}}}_y(t)|y).
\end{equation}

The above equation is the key to compare the behaviors of two processes
(recall that $\mathbf{\mathfrak{r}}_y(t,x):=\mathfrak{r}_{x,y}([0,t))$).
We denote by $\theta_\boldmu$, $\mathcal{N}_\boldmu$ and $\pr_\boldmu$ the quantities related to the 
$(\boldmu, {\mathfrak{R}})$
(to avoid a cumbersome notation we ignore the dependence on  $\mathfrak{R}$
of $\pr_\boldmu$). 
Similarly
we denote by $\theta_\boldnu$, $\mathcal{N}_\boldnu$ and $\pr_\boldnu$ the quantities related to 
$(\boldnu, \bar{\mathfrak{R}})$.

\begin{teo}
    \label{pro:comparehomfirework}
    Let $(\boldmu, {\mathfrak{R}})$ and $(\boldnu, \bar{\mathfrak{R}})$
    be two
    homogeneous firework processes on $\mathbb{N}$.
    Suppose that Assumption~\ref{ass:1} or Assumption~\ref{ass:2} holds.
    \begin{enumerate}
            \item 
        If
        $\theta_\boldnu(\mathbf{f}\colon \pr_\boldnu(V|\mathcal{N}_\boldnu=\mathbf{f})>0)=1-\nu(\mathbf{0})$ 
        then $\theta_\boldmu(\mathbf{f}\colon \pr_\boldmu(V|\mathcal{N}_\boldmu=\mathbf{f})>0)=1-\mu(\mathbf{0})$. 
        \item If $\theta_\boldmu(\mathbf{f}\colon \pr_\boldmu(V|\mathcal{N}_\boldmu=\mathbf{f})=0)=1$, then
        $\theta_\boldnu(\mathbf{f}\colon \pr_\boldnu(V|\mathcal{N}_\boldnu=\mathbf{f})=0)=1$.
            \end{enumerate}
\end{teo}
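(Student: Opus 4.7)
The plan is to reduce both statements to a single implication between the convergence of the series $\sum_{n=0}^\infty \prod_{i=0}^n G_\boldmu(\bm{\mathfrak{r}}(i+1)|0)$ and $\sum_{n=0}^\infty \prod_{i=0}^n G_\boldnu(\bar{\bm{\mathfrak{r}}}(i+1)|0)$ that govern homogeneous firework survival. By the equivalences \eqref{eq:betterthm2} (coming from Theorem~\ref{thm:2}), the hypothesis of (1) is equivalent to the convergence of the $\boldnu$-series and its conclusion is equivalent to the convergence of the $\boldmu$-series; moreover, (2) is precisely the contrapositive of (1). Hence it suffices to show: \emph{if the $\boldnu$-series converges, then so does the $\boldmu$-series}.

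The key analytic input is Equation~\eqref{eq:assgerm}, available under either Assumption~\ref{ass:1} or Assumption~\ref{ass:2}. Combined with the homogeneity of both processes (which collapses the $y$-dependence), it yields
\[
G_\boldmu(\bm{\mathfrak{r}}(t)|0) \le G_\boldnu(\bar{\bm{\mathfrak{r}}}(t)|0) \quad \text{for all } t \ge T.
\]
Setting $a_i := G_\boldmu(\bm{\mathfrak{r}}(i+1)|0)$ and $b_i := G_\boldnu(\bar{\bm{\mathfrak{r}}}(i+1)|0)$, and choosing an integer $N$ with $N+1 \ge T$, I obtain $a_i \le b_i$ for all $i \ge N$. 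The standing assumption $\mathfrak{r}_{x,y}([0,1)) \in (0,1)$ ensures that $\mathfrak{r}_x([0,i+1)) \ge \mathfrak{r}_x([0,1))>0$ for every $i \ge 0$, and hence any $f$ with $\mu(f)>0$ contributes a strictly positive term to $a_i$, giving $a_i>0$; similarly $b_i>0$.

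To compare the two series, I split the products at index $N$: for every $n \ge N$,
\[
\prod_{i=0}^n a_i = \Big(\prod_{i=0}^{N-1} a_i\Big) \prod_{i=N}^n a_i \le C \prod_{i=N}^n b_i, \qquad C := \prod_{i=0}^{N-1} a_i > 0.
\]
Summing over $n$ and noting that $\sum_{n \ge N} \prod_{i=N}^n b_i$ converges if and only if $\sum_{n \ge 0} \prod_{i=0}^n b_i$ converges (they differ by a strictly positive multiplicative constant and by finitely many initial terms), I conclude that convergence of the $\boldnu$-series forces convergence of the $\boldmu$-series. Invoking \eqref{eq:betterthm2} in both directions then yields (1), and (2) follows by contrapositive.

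The argument is essentially routine once Theorem~\ref{thm:2} and Equation~\eqref{eq:assgerm} are in hand; the only mildly delicate point is handling the finite threshold $T$ cleanly, which is accomplished by absorbing the first $N$ terms of each product into a positive constant so that the comparison survives summation.
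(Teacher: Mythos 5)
Your proposal is correct and follows essentially the same route as the paper: both arguments reduce (1) and (2) via the dichotomy of Theorem~\ref{thm:2} and \eqref{eq:betterthm2} to a comparison of the two series, apply \eqref{eq:assgerm} termwise for indices beyond the threshold $T$, and dispose of the finitely many initial factors by noting they only affect the sums by a positive multiplicative constant (the paper restarts the products at $n_0\ge T$, you factor out $C=\prod_{i=0}^{N-1}a_i$ — the same manipulation). Your observation that (2) is the contrapositive of (1) modulo the equivalences is a slight streamlining of the paper's two separate implication chains, but not a different method.
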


We note that, according to Theorem~\ref{thm:2}, either $\theta_\boldmu(\mathbf{f}\colon \pr_\boldmu(V|\mathcal{N}_\boldmu=\mathbf{f})=0)=1$ or $\theta_\boldmu(\mathbf{f}\colon \pr_\boldmu(V|\mathcal{N}_\boldmu=\mathbf{f})>0)=1-\mu(\mathbf{0})$.

\begin{teo}\label{pro:comparereverse} 
Let $(\boldmu, {\mathfrak{R}})$ and $(\boldnu, \bar{\mathfrak{R}})$
    be two
   reverse firework processes on $\mathbb{N}$. 
    Suppose that Assumption~\ref{ass:1} or Assumption~\ref{ass:2} holds.
\begin{enumerate}
 \item If $\theta_\boldnu(\mathbf{f}\colon \pr_\boldnu(S|\mathcal{N}_\boldnu=\mathbf{f})=1)=1-\nu_0(\mathbf{0})$ 
 then
 $\theta_\boldmu(\mathbf{f}\colon \pr_\boldmu(S|\mathcal{N}_\boldmu=\mathbf{f})=1)=1-\mu_0(\mathbf{0})$. 
\item If the two processes are homogeneous, we have that if $\theta_\boldmu(\mathbf{f}\colon \pr_\boldmu(S|\mathcal{N}_\boldmu=\mathbf{f})=0)=1$, then
$\theta_\boldnu(\mathbf{f}\colon \pr_\boldnu(S|\mathcal{N}_\boldnu=\mathbf{f})=0)=1$.
\end{enumerate}
\end{teo}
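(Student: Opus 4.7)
The plan is to reduce each part to the characterizations established in Theorems~\ref{th:reversehomogeneous} and \ref{th:reverseinhomogeneous}, using inequality~\eqref{eq:assgerm} as the sole probabilistic input. That inequality, guaranteed by either Assumption~\ref{ass:1} or Assumption~\ref{ass:2}, states that for every $t\ge T$ and every $y\in Y$,
\[
G_\boldmu(\mathbf{\mathfrak{r}}_y(t)|y)\le G_\boldnu(\overline{\mathbf{\mathfrak{r}}}_y(t)|y),
\]
equivalently $1-G_\boldmu(\mathbf{\mathfrak{r}}_y(t)|y)\ge 1-G_\boldnu(\overline{\mathbf{\mathfrak{r}}}_y(t)|y)$. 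Since all quantities of interest are tails of series of nonnegative terms, the restriction to $t\ge T$ is harmless: adding or removing finitely many initial terms affects neither convergence nor divergence.

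For Part~(1), start from the hypothesis $\theta_\boldnu(\mathbf{f}\colon\pr_\boldnu(S|\mathcal{N}_\boldnu=\mathbf{f})=1)=1-\nu_0(\mathbf{0})$. Theorem~\ref{th:reverseinhomogeneous}(1), applied to the $\boldnu$-process, translates this into the divergence of $\sum_{k\ge 1}\bigl(1-G_\boldnu(\overline{\mathbf{\mathfrak{r}}}_{n+k}(k)|n+k)\bigr)$ for every $n\in\mathbb{N}$. Applying the displayed inequality at $(y,t)=(n+k,k)$ for $k\ge T$ transfers this divergence term-by-term to the $\boldmu$-side, i.e.\ $\sum_{k\ge 1}\bigl(1-G_\boldmu(\mathbf{\mathfrak{r}}_{n+k}(k)|n+k)\bigr)=+\infty$ for every $n\in\mathbb{N}$, and a second appeal to Theorem~\ref{th:reverseinhomogeneous}(1) yields $\theta_\boldmu(\mathbf{f}\colon\pr_\boldmu(S|\mathcal{N}_\boldmu=\mathbf{f})=1)=1-\mu_0(\mathbf{0})$.

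For Part~(2) we exploit the genuine characterization of extinction available in the homogeneous setting: by Theorem~\ref{th:reversehomogeneous}, $\theta_\boldmu(\mathbf{f}\colon\pr_\boldmu(S|\mathcal{N}_\boldmu=\mathbf{f})=0)=1$ is equivalent to $W_\boldmu:=\sum_i\bigl(1-G_\boldmu(\mathbf{\mathfrak{r}}(i)|1)\bigr)<+\infty$. Combining $1-G_\boldnu(\overline{\mathbf{\mathfrak{r}}}(i)|1)\le 1-G_\boldmu(\mathbf{\mathfrak{r}}(i)|1)$ for $i\ge T$ with the absorption of the finitely many missing terms gives $W_\boldnu<+\infty$, and Theorem~\ref{th:reversehomogeneous}(2) then closes Part~(2). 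The only real subtlety — and the reason Part~(2) is stated only under homogeneity — is that in the inhomogeneous case Theorem~\ref{th:reverseinhomogeneous}(2) supplies only a sufficient condition for quenched survival, not a characterization of extinction, so from extinction of the $\boldmu$-process one cannot in general extract a tractable series identity to pass through \eqref{eq:assgerm}. This is a structural limitation rather than a technical obstacle in the argument itself, which I expect to be short and purely analytic once the series reformulations are invoked.
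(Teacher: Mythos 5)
Your proposal is correct and follows essentially the same route as the paper's proof: both parts reduce to the series characterizations in Theorems~\ref{th:reverseinhomogeneous}(1) and \ref{th:reversehomogeneous} via Equation~\eqref{eq:betterthm3}, transfer convergence/divergence term-by-term through inequality~\eqref{eq:assgerm} on the tails $t\ge T$, and then apply the characterizations in the reverse direction. Your closing remark on why Part~(2) requires homogeneity (the inhomogeneous theorem only gives a sufficient condition for survival, not a characterization of extinction) correctly identifies the structural reason for the restriction.
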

We stress that Theorem~\ref{pro:comparereverse} (1) holds both for the homogeneous and the inhomogeneous case.
We note also that, according to Theorem~\ref{thm:3}, either $\theta_\boldmu(\mathbf{f}\colon \pr_\boldmu(S|\mathcal{N}_\boldmu=\mathbf{f})=0)=1$ or $\theta_\boldmu(\mathbf{f}\colon \pr_\boldmu(S|\mathcal{N}_\boldmu=\mathbf{f})=1)=1-\mu_0(\mathbf{0})$.

The following examples deal with firework processes. We note that the same construction and results apply to the case of reverse firework processes (defined
with the same law $(\boldmu,\mathfrak{R})$).
In Example~\ref{exmp:fireworkmixedpoisson1} we compare two processes with only one type of stations. One process has a Poisson number of stations at each
site, while the second has a mixed Poisson, which is a perturbation of the total law of the first process. We show that the second process
can survive even if there are infinitely many sites which are ``subcritical" and it may go extinct even if there are infinitely many sites which are supercritical.
\begin{exmp}
    \label{exmp:fireworkmixedpoisson1}
    Let us consider two homogeneous firework processes $(\boldmu, {\mathfrak{R}})$ and $(\boldnu, \bar{\mathfrak{R}})$ on $\mathbb{N}$, where $\boldmu$ and $\boldnu$ are as in Example~\ref{exmp:mix}, $Y=\mathbb{N}$ and $X$ is a singleton. The total number of stations at each site is a $\mathcal{P}oi(\Lambda)$ random variable in the first process and $\mathcal{P}oi(\lambda)$ in the second process, where
    $\Lambda \sim \alpha \delta_{\lambda-\varepsilon}+(1-\alpha) \delta_{\lambda+\varepsilon}$ for some $\lambda >0$, $\varepsilon \in (0, \lambda)$ and $\alpha \in (0,1/2)$. As shown in Example~\ref{exmp:mix} we have $\boldmu \gegerm \boldnu$.
    Here, there is just one type of station with radius law $\mathfrak{r}$ (whence $\mathbf{\mathfrak{r}}$ is a 1-dimensional vector).
   Suppose that 
    $\mathfrak{r} \big ([i+1,+\infty) \big ) =\frac{1+o(1)}{\lambda_0(i+1)}$ as $i \to +\infty$ for some $\lambda_0 >0$,
    and that
    $\overline{\mathfrak{r}}=\mathfrak{r}$. 
    It is trivial to check that 
    \begin{equation}
        \label{eq:asympt}
        \sum_{n \in \mathbb{N}} \prod_{i=0}^n \exp \Big ( -\lambda \mathfrak{r}\big ([i+1,+\infty) \big ) \Big ) =\sum_{n \in \mathbb{N}} \exp \Big ( -\lambda \sum_{i=0}^n \mathfrak{r}\big ([i+1,+\infty) \big ) \Big ) 
    \begin{cases}
        < +\infty & \text{if } \lambda >\lambda_0 \\
        &\\
        = +\infty & \text{if } \lambda <\lambda_0\\
    \end{cases}
    \end{equation}
    since $\sum_{i=0}^n \mathfrak{r}\big ([i+1,+\infty) \big ) =(1+o(1)) \ln (n)/\lambda_0$ as $n \to +\infty$. 
    Recall the explicit expression of the generating function of a $\mathcal{P}oi(\Lambda_1)$ law, that is, $\E \big [\exp \big ((t-1) \Lambda_1 \big ) \big ]$.
According to Theorem~\ref{thm:2}, if $\lambda > \lambda_0$,
$(\boldnu, \bar{\mathfrak{R}})$
survives for almost every realization of the environment such that there is at least one station at $0\in\N$, while if $\lambda<\lambda_0$ the process dies out for almost all realizations of the environment. Suppose that $\lambda > \lambda_0> \lambda-\varepsilon$. According to Theorem~\ref{thm:2} and Theorem~\ref{pro:comparehomfirework},
$(\boldmu, {\mathfrak{R}})$
survives for almost all realizations of the environment such that there is at least one station at $0\in\N$. However, in almost every realization of the environment (generated according to $\boldmu$) there are arbitrarily long stretches of consecutive vertices where the number of stations is generated with a Poisson distribution of parameter $\lambda-\varepsilon$ (which is subcritical).
\end{exmp}

In the previous example there is just one type of stations. We now describe a situation where there are two types of stations.
In Example~\ref{ex:nonid} and Example~\ref{ex:nonidMP} we have a non-independently diffusing family $\boldmu$; the difference is that in the first one
the total distribution $\rho_y$ (which does not depend on $y$) is Poisson, while in the second example it is a mixed Poisson.
Example~\ref{ex:nonid} shows that,  when the number of stations is supercritical for
one of the types and subcritical for the other, if we randomly assign one single type to all stations at each site, we may have extinction; while if
we distribute the type of stations so that one half of them is of each type, then we may have survival. 

\begin{exmp}\label{ex:nonid}
Consider a family $\mathfrak{R}:=\{\mathfrak{r}_{x,y}\}_{x\in x,y\in Y}$, where
$X=\{1,2\}$, $Y=\N$, and $\mathfrak{r}_{i,n}=\mathfrak{r}_{i,m}$, for all $i=1,2$, $n,m\in\N$. Moreover, suppose that
$\mathfrak{r}_{i,1}([t,+\infty))=(1+o(1))/(\lambda_{i,0}\cdot t)$, for $i=1,2$, where
$o(1)\to0$ as $t\to\infty$ and $\lambda>0$. 
Let $\lambda_0:=\big(\beta/\lambda_{1,0}+(1-\beta)/\lambda_{2,0}\big)^{-1}$.
Note that $\lambda_{0}\in(\lambda_{1,0},\lambda_{2,0})$.
Consider now a family $\boldmu_2$, where $\mu_{2,n}=\mu_2$ for all $n\in\N$ is such that, at a given $n\in\N$, we generate  a $\mathcal{P}oi(\lambda)$-distributed
number of stations, which are all of type 1 with probability $\beta\in(0,1)$, and of type 2 with probability $1-\beta$.
Let $\phi$ be the generating function of $\mathcal{P}oi(\lambda)$. If $\lambda<\lambda_{0}$, then
the process $(\boldmu_2,\mathfrak{R})$ dies out for almost every configuration. Indeed, let $\boldnu$ be the homogeneous, independently diffusing family,
given by the total distribution $\mathcal{P}oi(\lambda)$ and matrix $\{P_{n,i}\}_{n\in\N, i=1,2}$, with $p_{n,1}=\beta$, $p_{n,2}=1-\beta$.
We know from Section~\ref{subsec:genveindep} that, for all $\mathbf z\in[0,1]^\N$,
\[
G_{\boldmu_2}(\mathbf{z}|n)=\beta \phi\big(\mathbf{z}(1)\big)+(1-\beta)\phi\big(\mathbf{z}(2)\big),\qquad 
G_{\boldnu}(\mathbf{z}|n)= \phi\big(\beta\mathbf{z}(1)+(1-\beta)\mathbf{z}(2)\big),
\]
and $\boldnu\gepgf\boldmu_2$. According to Remark~\ref{rem:novelty}, the firework process $(\boldnu,\mathfrak{R})$ is equivalent to a single-type
process with total distribution $\mathcal{P}oi(\lambda)$ and radii distribution $\mathfrak{r}$ which does not depend on the site and is defined by
\[
\mathfrak{r}([t,+\infty):=(\beta\mathfrak{r}_{1,n}+(1-\beta)\mathfrak{r}_{2,n})([t,+\infty))=\frac{1+o(1)}{t}\lambda_0.
\]
By Equation~\eqref{eq:asympt}, if $\lambda<\lambda_{0}$, then $(\boldnu,\mathfrak{R})$ dies out for almost all
configurations and, by Theorem~\ref{pro:comparehomfirework}, so does $(\boldmu_2,\mathfrak{R})$.

Let now $\boldmu_1$ be as in Section~\ref{subsec:genveindep}, where $\rho_y\sim \mathcal{P}oi(\lambda)$ does not depend on $y$,
and the sequence $\alpha_i=\lfloor i/2\rfloor$. Roughly speaking, at any $y\in\N$, a $\mathcal{P}oi(\lambda)$-distributed number of stations is generated:
$\lfloor i/2\rfloor$ are of type 1, $\lfloor i/2\rfloor$ are of type 2 and the remainder (if there is any), is randomly assigned one of the types in $\{1,2\}$, each
with probability $1/2$.
If $\lambda>\lambda_0$, then
the firework process $(\boldmu_1,\mathfrak{R})$ survives with positive probability for almost every configuration, with at least one active station at
the origin.
Indeed, from Section~\ref{subsec:genveindep}, we know that $\boldmu_1\gepgf\boldnu$, where $\boldnu$ is as before, with $\beta=1/2$. 
Again, by Equation~\eqref{eq:asympt}, if $\lambda>\lambda_{0}$, then $(\boldnu,\mathfrak{R})$ 
survives with positive probability, for almost every configuration which has at least one active station at
the origin.
By Theorem~\ref{pro:comparehomfirework}, so does $(\boldmu_1,\mathfrak{R})$.
\end{exmp}

\begin{exmp}\label{ex:nonidMP}
Let law of the radii $\mathfrak{R}$, $\boldmu_1$, $\boldmu_2$  and the parameter $\lambda_0$ be
 as in Example~\ref{ex:nonid}.
 We change the common total distribution of $\boldmu_1$, $\boldmu_2$: instead of $\mathcal{P}oi(\lambda)$ we
 take $\rho_y\sim\mathcal{P}oi(\Lambda)$ (as in Example~\ref{exmp:mix}), where $\Lambda\sim\alpha\delta_{\lambda_1}+(1-\alpha)\delta_{\lambda_2}$,
$\alpha\in(0,1)$, $0<\lambda_1<\lambda_2$.
If
$\lambda_1<\lambda_0$ and $\alpha>(\lambda_2-\lambda_0)/(\lambda_2-\lambda_1)$, then $(\boldmu_2,\mathfrak{R})$
dies out for almost every configuration.
If
$\lambda_1>\lambda_0$ and $\alpha<(\lambda_2-\lambda_0)/(\lambda_2-\lambda_1)$, then $(\boldmu_1,\mathfrak{R})$
survives with positive probability for almost every configuration, with at least one active station at the origin.

We sketch the proof of the first case, when there is extinction (the case of survival is similar).
Define $\boldnu$ as in Example~\ref{ex:nonid}, with the mixed Poisson total distribution.
Clearly, $\boldnu\gepgf\boldmu_2$. Let us take $\lambda>0$ such that $\lambda_1<\lambda<\lambda_0$ and
$\alpha>(\lambda_2-\lambda)/(\lambda_2-\lambda_1)$. Consider the family of independently diffusing measures $\bar\boldnu$ defined as $\boldnu$,
with total distribution $\mathcal{P}oi(\lambda)$ instead of $\mathcal{P}oi(\Lambda)$.
Since $\alpha>(\lambda_2-\lambda)/(\lambda_2-\lambda_1)$, we have that, according to Example~\ref{exmp:mix}, $\bar\boldnu\gepgf\boldnu$.
From $\lambda<\lambda_0$, we get that $(\bar\boldnu,\mathfrak{R})$ dies out for almost every configuration, see Equation~\eqref{eq:asympt}
(see also Theorem~\ref{thm:2}).
We apply Theorem~\ref{pro:comparehomfirework} twice: we have that $(\boldnu,\mathfrak{R})$ and  $(\boldmu_2,\mathfrak{R})$ die out for almost every configuration.
\end{exmp}

\section{Proofs}\label{sec:proofs}

\begin{proof}[Proof of Proposition~\ref{pro:Gtopology}]
  
	\leavevmode
\begin{enumerate}
\item It is easy: see \cite[Sections 2 and 3]{cf:BZ2} for the details.    
\item It is enough to prove that $\mathbf{z} \mapsto G_\boldmu(\mathbf{z}|y)=\sum_{f \in S_X} \mu_y(f) \prod_{x \in X} \mathbf{z}(x)^{f(x)}$ is continuous with respect to the pointwise convergence topology for each $y \in Y$. To this aim, note that $\sup_{\mathbf{z} \in [0,1]^X} \big |\mu_y(f) \prod_{x \in X} \mathbf{z}(x)^{f(x)} \big |=\mu_y(f)$ and $\mathbf{z} \mapsto \prod_{x \in X} \mathbf{z}(x)^{f(x)}$ is continuous with respect to the pointwise convergence topology for every $f \in S_X$. Since $\sum_{f \in S_X} \mu_y(f)=1<+\infty$ then $\sum_{f \in S_X} \mu_y(f) \prod_{x \in X} \mathbf{z}(x)^{f(x)}$ converges to $G_\boldmu(\mathbf{z}|y)$ uniformly with respect to $\mathbf{z} \in [0,1]^X$, therefore $G_\boldmu(\cdot|y)$ is continuous.
\item 
Let us note that the tightness of $\{\rho_y\}_{y \in Y}$ means that for every $\varepsilon >0$ there exists $n=n(\varepsilon) \in \mathbb{N}$ such that $\mu_y(f\colon |f| \le n) > 1-\varepsilon $ for all $y \in Y$.
We start by proving that $\big | \prod_{x \in X} \mathbf{z}(x)^{f(x)}-\prod_{x \in X} \mathbf{v}(x)^{f(x)} \big | \le \min(1, |f| \cdot \|\mathbf{z}-\mathbf{v}\|_\infty)$. The inequality $\big | \prod_{x \in X} \mathbf{z}(x)^{f(x)}-\prod_{x \in X} \mathbf{v}(x)^{f(x)} \big | \le 1$ follows trivially from the fact that $|t-s| \le \max(|t|,|s|)$ for $t,s \ge 0$. As for the second inequality, observe that if $t_i, s_i \in [0,1]$ for all $i=1, \ldots,n$ then
\[
\begin{split}
    \prod_{i=1}^n t_i-\prod_{i=1}^n s_i =&
    \prod_{i=1}^n t_i-s_n \prod_{i=1}^{n-1} t_i+ s_n \prod_{i=1}^{n-1} t_i+ \cdots +\prod_{i=1}^{j} t_i\prod_{i=j+1}^{n} s_i - \prod_{i=1}^{j-1} t_i\prod_{i=j}^{n} s_i\\ &+ \cdots + t_1 \prod_{i=2}^n s_i -  \prod_{i=1}^n s_i
\end{split}
\]
whence
\[
\begin{split}
    \Big |\prod_{i=1}^n t_i-\prod_{i=1}^n s_i\Big | \le &
    \Big |\prod_{i=1}^{n-1} t_i\Big | \cdot |t_n-s_n|+ \cdots +
    \Big |\prod_{i=1}^{j-1} t_i\prod_{i=j+1}^{n} s_i|\cdot |t_j-s_j|\\ &+ \cdots + \Big |\prod_{i=2}^n s_i\Big |\cdot |t_1-s_1| \le n \max_{i=1, \ldots, n} |t_i-s_i|.
\end{split}
\]
Let us fix $\varepsilon >0$ and let $n=n(\varepsilon/2)$ (coming from the tightness). For every $\mathbf{z}, \mathbf{v}$ such that $\|\mathbf{z}-\mathbf{v}\|_\infty \le \delta:=\varepsilon/(2n)$ and for every $y \in Y$ we have
\[
\begin{split}
    \big |G_\boldmu(\mathbf{z}|y)-G_\boldmu(\mathbf{v}|y) \big | &\le 
\sum_{f \in S_X\colon |f| \le n} \mu_y(f) \Big |\prod_{x \in X}  \mathbf{z}(x)^{f(x)} - \prod_{x \in X}\mathbf{v}(x)^{f(x)} \Big |\\&+
\sum_{f \in S_X\colon |f| >n} \mu_y(f) \Big |\prod_{x \in X}  \mathbf{z}(x)^{f(x)} - \prod_{x \in X}\mathbf{v}(x)^{f(x)} \Big |\\
&\le \sum_{f \in S_X\colon |f| \le n}  \mu_y(f) |f|\cdot \|\mathbf{z}-\mathbf{v}\|_\infty+ \sum_{f \in S_X\colon |f| >n} \mu_y(f)
\\
& \le n\frac{\varepsilon}{2n}+\frac{\varepsilon}{2}=\varepsilon.
\end{split}
\]
Hence $\|\mathbf{z}-\mathbf{v}\|_\infty \le \delta$ implies $\big \|G_\boldmu(\mathbf{z})-G_\boldmu(\mathbf{v}) \big \|_\infty \le \varepsilon$.
  \end{enumerate}
\end{proof}

\begin{proof}[Proof of Theorem~\ref{pro:germindepdiff}]
	\noindent $(1) \Rightarrow (2)$. 
	Using the hypothesis and the expression for $\phi_y$, we get that
	for every $t \in [\delta,1]$ and for all $x \in X$, since $t \mathbf{1} \in [\delta,1]^X$ then
	\[
	\phi_y^\boldmu(t)
	=G_\boldmu(t \mathbf{1}|y) \le G_\boldnu(t \mathbf{1}|y)
	=\phi_x^\boldnu(t).
	\]
	
	\noindent $(2) \Rightarrow (1)$. 
	Recall that, for independently diffusing families, the generating functions are
	$G_\boldmu(\mathbf{z}|y)=\phi_y^\boldmu(P\mathbf{z}(y))$ and
	$G_\boldnu(\mathbf{z}|y)=\phi_y^\boldnu(P\mathbf{z}(y))$.
	We observe that the map $\mathbf{z} \mapsto P\mathbf{z}$
	is non-decreasing and continuous from $[0,1]^X$ into $[0,1]^Y$; in
	particular, if $\mathbf{z} \in [\delta,1]^X$ for some $\delta<1$, then $P\mathbf{z} \in [\delta,1]^Y$. Indeed
	$P\, t \mathbf{1}=t \mathbf{1}$ therefore $\delta \mathbf{1}=P\, \delta \mathbf{1} \le P \mathbf{z} \le P \mathbf{1}=\mathbf{1}$.
	Take $\mathbf{z} \in [\delta,1]^X$; then for all $y \in Y$
	\[
	G_\boldmu(\mathbf{z}|y)=\phi_y^\boldmu(P\mathbf{z}(y)) \le
	\phi_y^\boldnu(P\mathbf{z}(y))=
	G_\boldnu(\mathbf{z}|y)
	\]
	where we used the inequality $\phi_y^\boldmu(t) \le \phi_y^\boldnu(t)$ for $t=P\mathbf{z}(y) \in [\delta,1]$ (due to the monotonicity of $P$).
\end{proof}

\begin{proof}[Proof of Theorem~\ref{pro:germidextended}]
	\leavevmode
\begin{enumerate}
\item
By the monotone convergence theorem,
$\E[U_y \cdot \exp(-t U_y)] \uparrow \E[U_y)] \in (\E[W_y],+\infty]$, as $t\uparrow 0$.
Therefore,
 there exists $\delta=\delta(y)<1$ such that 
$\E[U_y\cdot \exp(-t U_y)] \ge\E[W_y]$, for all $t\in[\delta,1]$.
\item
We note that there exists $\delta<1$ such that $\E[t^{U_y}]\le \E[t^{W_y}]$ for all $t \in [\delta,1]$ if and only if there exists $\delta>0$ such that $\E[\exp(-t U_y)] \le \E[\exp(-t W_y)]$ for all $t \in [0,\delta]$. Moreover, by using well-known results in measure theory
\[
\frac{\diff ^k}{\diff t^k} \E[ \exp(-t U_y)- \exp(-t W_y)] =(-1)^k \E[U^k_y \exp(-t U_y)-W^k_y \exp(-t W_y)]
\]
since $Z^k\exp(-tZ) \le (k/\varepsilon e)^k <+\infty$ uniformly w.~r.~to $t \in [\varepsilon, +\infty)$ for every $\varepsilon>0$, 
every $k\in\N$ and some $M>0$ ($Z$ is a generic non-negative random variable). The same inequality holds in $t=0$ by taking the limit when $t \to 0^-$.
More generally, given a generic non-negative random variable $Z$, the Bounded Convergence theorem yields
$\sum_{i=1}^n Z^i \exp(-tZ)/i! \uparrow \exp((1-t)Z)$ a.s.~(and in $L^1$ whenever $\E[\exp((1-t)Z)]<+\infty$, for instance for all $t \ge 1$). Clearly $\E[\sum_{i=1}^n Z^i \exp(-tZ)/i!]=\sum_{i=1}^n \E[Z^i \exp(-tZ)]/i! \uparrow \E[\exp(1-t)Z]$.

Define now $H_y(t):= \E[ \exp(-t U_y)- \exp(-t W_y)]$; under the hypotheses, by using Taylor expansion in a right neighborhood of $0$, if $t >0$
\[
H_y(t)=\frac{(-1)^{k+1}}{(k+1)!} \E[U^{k+1}_y \exp(-s U_y)-W^{k+1}_y \exp(-s W_y)]
\]
for some $s \in (0,t)$.
Now if $t \in (0,\delta]$ and $k$ is even then
\[
\E[U^{k+1}_y \exp(-s U_y)] \ge \E[U^{k+1}_y \exp(-\delta U_y)] \ge \E[W^{k+1}_y] \ge  \E[W^{k+1}_y \exp(-s U_y)]
\]
whence $H_y(t) \le 0$. The same happens if $k$ is odd by using similar arguments.
Theorem~\ref{pro:germindepdiff} yields the claim, since by assumption $\boldmu$ and $\boldnu$ are independently diffusing families with the same matrix $P$.
\end{enumerate}
\end{proof}

\begin{proof}[Proof of Theorem~\ref{pro:betterconditions}]
The proof 
is similar to the proof of Theorem~\ref{pro:germidextended}. The basic fact here is that if we define $H(t):=\E[t^{U_y}-t^{W_y}]$, then elementary results of measure theory yields
	\[
	\frac{\diff^k}{\diff t^k} H_y(t)=\E[t^{U_y-k}\prod_{j=0}^{k-1} (U_y-j)]
	\]
The proof then goes on as in the proof of Theorem~\ref{pro:germidextended}, using Taylor expansion in a left neighborhood of $1$.

Define $\varphi^\boldmu_y(t):=\phi^\boldmu_y(\exp(-t))$, $\varphi^\boldnu_y(t):=\phi^\boldnu_y(\exp(-t))$ and $\delta_1:=\exp(-\delta)$. Let us fix $k=0$: Equation~\eqref{eq:condvarphi} is equivalent to $\frac \diff{\diff t}\varphi^\boldnu_y(t)|_{t=\delta}\le\frac \diff{\diff t}\varphi^\boldmu_y(t)|_{t=0}$, while 
\eqref{eq:condphi} is equivalent to $\frac \diff{\diff t}\phi^\boldnu_y(t)|_{t=\delta_1}\ge\frac \diff{\diff t}\phi^\boldmu_y(t)|_{t=1}$. However
\[\begin{split}
    \frac \diff{\diff t}\phi^\boldnu_y(t)|_{t=\delta_1}\ge\frac \diff{\diff t}\phi^\boldmu_y(t)|_{t=1} \Longleftrightarrow &
\frac \diff{\diff t}\varphi^\boldnu_y(t)|_{t=\delta}\le
\exp(-\delta)\frac \diff{\diff t}\varphi^\boldmu_y(t)|_{t=0}\\
 & \qquad \qquad \Uparrow \qquad \not\Downarrow\\
 & \frac \diff{\diff t}\varphi^\boldnu_y(t)|_{t=\delta}\le\frac \diff{\diff t}\varphi^\boldmu_y(t)|_{t=0},
\end{split}
\]
since the derivatives of the $\varphi_y$s are negative. Therefore 
\eqref{eq:condphi} implies, and is not implied by, \eqref{eq:condvarphi}, when $k=0$. The cases $k\ge1$ are analogous.
\end{proof}

\begin{proof}[Proof of Proposition~\ref{pro:comparemix}]
The key to the proof is the explicit expression of the generating functions for mixed Poisson.
Let $U_y\sim\rho^\boldmu_y$ and  $W_y\sim\rho^\boldnu_y$.
By elementary measure theory, 
their generating functions can be explicitly computed as
are
\[\begin{split}
\phi_{U_y}(t)=\sum_{j\in\N} t^j \E[\exp(-\Lambda_{1,y})\cdot \Lambda_{1,y}^j/j!]=\E[\exp((t-1)\Lambda_{1,y})], \quad 
\phi_{W_y}(t) =\E[\exp((t-1)\Lambda_{2,y})];
\end{split}
\]
for $t \in[0,1]$ and their derivatives are
\[\begin{split}
\frac \diff{\diff t}\phi_{U_y}(t)=\E[\Lambda_{1,y}\cdot\exp((t-1)\Lambda_{1,y})],\quad 
\frac \diff{\diff t}\phi_{W_y}(t)=\E[\Lambda_{2,y}\cdot\exp((t-1)\Lambda_{2,y})]
\end{split}
\]
for $t \in[0,1]$ (where the derivative in $1$ is the left derivative).

	Elementary computations show that the condition of Proposition~\ref{pro:comparemix} is the case $k=0$ of the conditions discussed in Theorem~\ref{pro:betterconditions},
    therefore the proof is similar to the proof of Theorem~\ref{pro:germidextended}. 
\end{proof}

\begin{proof}[Proof of Example~\ref{exmp:mix2}]
	We aim at applying Proposition~\ref{pro:comparemix} (2). Let 
    $\alpha:=\sup_y\alpha_y$: by hypothesis, $\alpha<1/2$. We evaluate, for any $\delta<1$,
\[
\begin{split}
    \E[\Lambda_{1,y}\cdot e^{(\delta-1)\Lambda_{1,y}}]& =
    \alpha_y(\lambda_y-\varepsilon)e^{(\lambda_y-\varepsilon)(\delta-1)}
    + (1-\alpha_y)(\lambda_y+\varepsilon)e^{(\lambda_y+\varepsilon)(\delta-1)}\\
    &\ge
    \big(\alpha_y(\lambda_y-\varepsilon)+(1-\alpha_y)(\lambda_y+\varepsilon)    \big)e^{(\lambda_y+\varepsilon)(\delta-1)}   \\
    &=
    \big((\lambda_y+(1-2\alpha_y)\varepsilon   \big)e^{(\lambda_y+\varepsilon)(\delta-1)} \\
    & \ge 
    \big((\lambda_y+(1-2\alpha)\varepsilon   \big)e^{(M+\varepsilon)(\delta-1)}.   
\end{split}
\]    
Take $\delta<1$ such that 
\[
e^{(M+\varepsilon)(\delta-1)}>\frac{M}{M+ (1-2\alpha)\varepsilon}\ge 
\frac{\lambda_y}{\lambda_y+ (1-2\alpha)\varepsilon}.
\]
By this choice,
\[
\E[\Lambda_{1,y}\cdot e^{(\delta-1)\Lambda_{1,y}}]\ge
     \big((\lambda_y+(1-2\alpha)\varepsilon   \big)e^{(M+\varepsilon)(\delta-1)}\ge \lambda_y =  \E[\Lambda_{2,y}],
\]  
whence, by Proposition~\ref{pro:comparemix}, $\boldmu\gegerm\boldnu$.

If there exists $y\in Y$ such that $\alpha_y>(1-\exp(-\varepsilon))/(\exp(\varepsilon)-\exp(-\varepsilon))$, then
\[
\E[\exp(-\Lambda_{1,y})]=\alpha_y e^{-(\lambda_y-\varepsilon)}
    + (1-\alpha_y)e^{-(\lambda_y+\varepsilon)} >\exp(-\lambda_{y}),
    \]
    which implies $\boldmu \not \!\! \gepgf \boldnu$.

    Conversely, let $\alpha:=\inf_y \alpha_y>1/2$. If $\delta<1$
    \[
\begin{split}
\E[\Lambda_{2,y}e^{\Lambda_{2,y}(\delta-1)}-\Lambda_{1,y}]&=\lambda_y e^{\lambda_y(\delta-1)} - \alpha_y {(\lambda_y-\varepsilon)}-(1-\alpha_y){(\lambda_y+\varepsilon)} \\
&= \varepsilon (2\alpha_y-1)-\lambda_y(1-e^{\lambda_y(\delta-1)}) \ge 
\varepsilon (2\alpha-1)-M(1-e^{M(\delta-1)})\ge 0
\end{split}    
    \]
    for all $\delta \ge 1-\delta_0$ for a suitable $\delta_0<1$.
\end{proof}

\begin{proof}[Proof of Proposition~\ref{pro:BPVE1}]
We prove below that, under the hypotheses, inequality~\eqref{eq:reverseinequality2} holds, that is, there exists $\delta \in [0,1)$ such that for all sufficiently large $n \in \mathbb{N}$ 
\[
\sum_{k=0}^{k_0} \delta^k \Big (\sum_{i \ge k+1} \rho^\boldmu_n(i) \Big ) \ge a_n.
\]
Then we know by Lemma~\ref{lem:donotwriteanotherlemmaplease} that $\phi^\boldmu_n(t) \le \phi^\boldnu_n(t)$ for all $t \in [\delta, 1]$, whence Proposition~\ref{pro:germBPVE} yields the result (as in Example~\ref{exmp:BPVE1}).

Note that, by hypotheses, there exists $\Delta >0$ such that, for all sufficiently large $n \in \mathbb{N}$, $\sum_{k=0}^{k_0} (k_0+1-k) \rho^\boldmu_n(k) \le k_0-\Delta$. Therefore,
if $\delta \in \big ((1+\Delta)^{-1/k_0}, 1 \big )$, for all sufficiently large $n \in \mathbb{N}$ we have
\[
\begin{split}
\sum_{k=0}^{k_0} \delta^k \Big (\sum_{i \ge k+1} \rho^\boldmu_n(i) \Big )& =
\sum_{k=0}^{k_0} \delta^k \Big (1-\sum_{i=0}^k \rho^\boldmu_n(i) \Big )
\ge \delta^{k_0}\sum_{k=0}^{k_0}\Big (1-\sum_{i=0}^k \rho^\boldmu_n(i) \Big )\\
& 
=
\delta^{k_0}\Big ( k_0+1-\sum_{k=0}^{k_0}\sum_{i=0}^k \rho^\boldmu_n(i) \Big )
=\delta^{k_0}\Big ( k_0+1-\sum_{i=0}^{k_0}\sum_{k=i}^{k_0} \rho^\boldmu_n(i) \Big )\\
&= \delta^{k_0}\Big ( k_0+1-\sum_{i=0}^{k_0} (k_0+1-i) \rho^\boldmu_n(i) \Big )
\ge \delta^{k_0} (1+\Delta) \ge 1>a_n.
\end{split}
\]
\end{proof}

The proofs of the results of Section~\ref{sec:firework} follow closely the proofs of \cite{cf:BZ13}.

\begin{proof}[Proof of Example~\ref{exmp:nonind1}]
The explicit expression of $\mu_{1,y}$ is
\[
\mu_{1,y}(f)=
\begin{cases}
    \rho_y(i) \frac{r_i!}{\prod_{x \in X} (f(x)-\alpha_i k p_x)!}
    \prod_{x \in X} p(y,x)^{f(x)-\alpha_i k p_x} & \textrm{if } f(x) =r_i+ \alpha_i k p_x,\, \forall x \in X\\
    0 & \textrm{otherwise.}
\end{cases}
\]
It is easy to show that $\prod_{x \in X} \mathbf{z}(x)^{p_x} \le  \sum_{x \in X} p_x \mathbf{z}(x)$ (and the equality holds if and only if $z(x)$ does not depend on $x \in X$ such that $p_x>0$), whence
\[
\begin{split}
G_{\boldmu_1}(\mathbf{z}|y)&=\sum_{i=0}^\infty 
\rho_y(i) \Big ( \prod_{x \in X} \mathbf{z}(x)^{\alpha_i k p_x}  \Big ) \Big ( \sum_{x \in X} p_x \mathbf{z}(x)  \Big )^{r_i} \\
& \le \sum_{i=0}^\infty 
\rho_y(i) \Big ( \sum_{x \in X} p_x \mathbf{z}(x)  \Big )^{k\alpha_i} \Big ( \sum_{x \in X} p_x \mathbf{z}(x)  \Big )^{r_i} \\
& = \sum_{i=0}^\infty \rho_y(i) \Big ( \sum_{x \in X} p_x \mathbf{z}(x)  \Big )^{i} = G_\boldnu( \mathbf{z} |y).
\end{split}
\]
This proves that $\boldmu_1  \gepgf \boldnu$, which implies $\boldmu_1  \gegerm \boldnu$.
\end{proof}

\begin{proof}[Proof of Theorem~\ref{thm:2}]
    We investigate the behavior of the single-station counterpart of our process.
Since $R_{n,x,i} \sim \mathfrak{r}_x$ and $F_i \sim \mu$ for all $n,x,i$ then the law of 
$\widetilde R_{n} \sim \widetilde R$ does not depend on $n \in \N$ and, according to Equation~\eqref{eq:radiusmaxlaw}, $\pr(\widetilde R < i)= G_\boldmu(\mathfrak{r}(i)|0)$.

By  \cite[Theorem 2.1]{cf:JMZ}
the a.s.~extinction of the single-station counterpart is equivalent to
\[
 \sum_{n=0}^\infty \prod_{i=0}^n \pr(\widetilde R < i+1)=+\infty.
\]
Hence Equation~\eqref{eq:sumprod2.5} is equivalent to $\pr(V)=0$ which, in turn,
is equivalent to $\pr(V| \mathcal{N}=\mathbf{f})=0$ for $\theta$-almost all configurations $\mathbf{f} \in (S_X)^\N$ (see the discussiont after eq.~\eqref{eq:disintegration}).

We are left to prove that $\pr(V)>0$ implies $\theta(\mathbf{f}  \in {(S_X)^{\mathbb{N}}} \colon \pr(V|\mathcal{N}=\mathbf{f})>0)=\pr(F_0 \neq \mathbf{0})$, since $\pr(F_0 \neq \mathbf{0})=1-\mu(\mathbf{0})$.
Note that $\pr(V)=\pr(V| F_0 \neq \mathbf{0}) \pr(F_0 \neq \mathbf{0})$
since $\pr(V| F_0 = \mathbf{0})=0$; moreover, by our assumptions, $\pr(V| F_0 \neq \mathbf{0})>0$ if and only if
$\pr(V)>0$. We condition now on the event $\{F_0 \neq \mathbf{0}\}=\widehat{{(S_X)^{\mathbb{N}}}} \times \mathcal{O}$ (see Section~\ref{subsec:N}):
where we denote by $\widehat{{(S_X)^{\mathbb{N}}}}$ the space $\{\mathbf{f}=\{f_i\}_{i \in \N}\colon f_0 \neq \mathbf{0}\}$ and by $\widehat \theta$ the measure $\theta$
conditioned on $\widehat{{(S_X)^{\mathbb{N}}}}$ (clearly $\theta(\widehat{{(S_X)^{\mathbb{N}}}})=\pr(F_0 \neq \mathbf{0})$). Observe that $\theta(\mathbf{f}=\{f_i\}_{i \in \N}\colon
f_i \neq \mathbf{0}\ \textrm{infinitely often})=1$ 
and that
the variables $\{F_i
\}_{i \ge 1}$ are independent 
with respect to the conditioned
probability $\pr(\cdot| F_0 \neq \mathbf{0})$.
Denote by
\[
 W_{k}:=\{\mathbf{f}=\{f_i\}_{i \in \N} \in \widehat{{(S_X)^{\mathbb{N}}}}\colon  \pr(\textrm{``}V \textrm{ starting from }f_k \textrm{ stations at }
k\textrm{''}| \mathcal{N}=\mathbf{f})>0 \}
\]
the set of realizations of the environment such that the firework process starting from
vertex $k$ (and moving rightwards) survives with  positive probability.
Note that $\theta(W_0)=\theta(\mathbf{f} \in {(S_X)^{\mathbb{N}}} \colon  \pr(V| \mathcal{N}=\mathbf{f})>0 )$.
Clearly $W_k \in \sigma (F_i\colon i \ge k)$
where, in this case, $\{F_i\}_{i \in \N}$ is the canonical realization of $\mathcal{N}$ on ${(S_X)^{\mathbb{N}}}$.

If for a fixed sequence in $\widehat{{(S_X)^{\mathbb{N}}}}$  there is survival for the firework process
starting from $f_{i_0}$ stations at $i_0$ 
then,
 by the FKG inequality,  there is survival starting from every $i \le i_0$
(note that 
if Equation~\eqref{eq:sumprod2.5} holds then the radii are unbounded variables, hence
the event $\{\widetilde R>i_0\}$ has a positive
  probability). Whence $W_k \supseteq W_{k+1}$ for all $k \in \N$.

Moreover, if there is survival for a sequence $\mathbf{f}=\{f_i\}_{i \in \N}$ then for all $j \in \N$
  there exists $i_0 \ge j$ such that there is survival starting from
$f_{i_0}$ stations at $i_0$. Hence
$W_0 \subseteq \limsup_k W_k$ which implies that $W_0=\limsup_k W_k=W_i$ for all $i \in \N$.
Hence $W_0$ is a tail event, namely it belongs to $\bigcap_{k \in \N} \sigma (F_i\colon i \ge k)$.
Thus
$\widehat \theta(W_0)$ is either $0$ or $1$. This implies that
$\theta(W_0)$ is either $0$ or $\pr(F_0 \neq \mathbf{0})$.

Equation~\eqref{eq:sumprod2.5} implies $\pr(V)>0$ or, equivalently,
$\theta(\mathbf{f} \in {(S_X)^{\mathbb{N}}} \colon \pr(V| \mathcal{N}=\mathbf{f})>0 )>0$.
Since $\theta(W_0)=\theta(\mathbf{f} \in {(S_X)^{\mathbb{N}}} \colon \pr(V| \mathcal{N}=\mathbf{f})>0 )$
we have that $\theta(\mathbf{f} \in {(S_X)^{\mathbb{N}}} \colon \pr(V| \mathcal{N}=\mathbf{f})\neq \mathbf{0} )=\pr(F_0 \neq \mathbf{0})$.
\end{proof}

\begin{proof}[Proof of Theorem~\ref{th:reversehomogeneous}]
Since the radii of the stations at the origin do not play any role in the process, we can again condition on the event $\{F_0 \neq \mathbf{0}\}$ 
whose probability is 
$\pr(F_0 \neq \mathbf{0})=1-\mu_0(\mathbf{0})$. 
Let $\widehat{{(S_X)^{\mathbb{N}}}}$ and $\widehat \theta$ as in the previous proof.
Since $\pr(S|\mathcal{N}=\mathbf{f})=0$ for every $\mathbf{f} \not \in \widehat{{(S_X)^{\mathbb{N}}}}$, we are left to prove that, if there is at least one station at the origin, then
    \begin{enumerate}
      \item $W=+\infty \Longrightarrow \widehat \theta(\mathbf{f} \in A\colon \pr(S|\mathcal{N}=\mathbf{f})=1)=1$, 
\item $W<+\infty \Longrightarrow \widehat \theta(\mathbf{f} \in A\colon \pr(S|\mathcal{N}=\mathbf{f})=0)=1$.
    \end{enumerate}
To this aim, let us apply \cite[Theorem 2.8]{cf:JMZ} to the single-station process.
Trivially $\E[\widetilde R]<+\infty$ if and only if $W<+\infty$.
 The
results follow immediately from the equivalence between
$\pr(S|F_0 \neq \mathbf{0})=1$ (resp.~$\pr(S|F_0 \neq \mathbf{0})=0$) and
$\pr(S|\mathcal{N}=\mathbf{f})=1$ (resp.~$\pr(S|\mathcal{N}=\mathbf{f})=0$) for $\widehat \theta$-almost all configurations $\mathbf{f} \in \widehat{{(S_X)^{\mathbb{N}}}}$ which is an easy consequence of Equation~\eqref{eq:disintegration}
Indeed, since $\{F_0 \neq \mathbf{0}\}= \widehat{{(S_X)^{\mathbb{N}}}} \times \mathcal{O}$ we have that, for every event $A$,
\[
\begin{split}
    \pr(A|F_0 \neq \mathbf{0})&= \frac{\pr(A \cap \{F_0 \neq \mathbf{0}\})}{\pr(F_0 \neq \mathbf{0})}=\frac{1}{\theta(\widehat{{(S_X)^{\mathbb{N}}}})} \int_{S_X} \pr \big (A \cap (\widehat{{(S_X)^{\mathbb{N}}}} \times \mathcal{O})|\mathcal{N}=\mathbf{f}\big )  \theta(\diff \mathbf{f})\\
    &= \int_{S_X} \nu\big (\mathbf{r} \in \mathcal{O}\colon (\mathbf{f}, \mathbf{r}) \in A \cap (\widehat{{(S_X)^{\mathbb{N}}}} \times \mathcal{O})\big ) \widehat \theta(\diff \mathbf{f})
    = \int_{\widehat{{(S_X)^{\mathbb{N}}}}} \nu\big (\mathbf{r} \in \mathcal{O}\colon (\mathbf{f}, \mathbf{r}) \in A\big ) \widehat \theta(\diff \mathbf{f})\\
    &= \int_{\widehat{{(S_X)^{\mathbb{N}}}}} \pr(A|\mathcal{N}=\mathbf{f})\widehat \theta(\diff \mathbf{f})
\end{split}
\]
since
\[\nu\big (\mathbf{r} \in \mathcal{O}\colon (\mathbf{f}, \mathbf{r}) \in A \cap (\widehat{{(S_X)^{\mathbb{N}}}} \times \mathcal{O})\big )=
\begin{cases}
    \nu\big (\mathbf{r} \in \mathcal{O}\colon (\mathbf{f}, \mathbf{r}) \in A\big ) & \textrm{if } \mathbf{f} \in \widehat{{(S_X)^{\mathbb{N}}}}\\
    0 &  \textrm{if } \mathbf{f} \not \in \widehat{{(S_X)^{\mathbb{N}}}}.
\end{cases}
\]
Note that, for every event $A$, $\pr(A|\mathcal{N}=\mathbf{f},F_0 \neq \mathbf{0})=\pr(A|\mathcal{N}=\mathbf{f})$ for all $\mathbf{f} \in \widehat{{(S_X)^{\mathbb{N}}}}$.

\end{proof}

In order to prove Theorem~\ref{thm:3} we need a lemma. In the following, given $\mathbf{z} \in [0,1]^X$ and $f \in S_X$ we define $\mathbf{z}^f:=\prod_{x \in X} \mathbf{z}(x)^{f(x)}$. Recall that $\{F_n\}_{n \in \N}$ is a sequence of independent $S_X$-valued random variables such that $F_n \sim \mu_n$ for all $n \in \N$.

\begin{lem}\label{lem:1}
 If $\{\mathbf{t}_{i,n}\}_{i,n \in \N, i \le n}$ is an arbitrary sequence 
 $\mathbf{t}_{i,n} \in [0,1]^X$
 and
\begin{equation}\label{eq:sumprod}
 \sum_{n=0}^\infty \prod_{i=0}^n G_\boldmu(\mathbf{t}_{i,n}|i)
 <+\infty
\end{equation}
then
\[
 \pr \left ( \sum_{n=0}^\infty \prod_{i=0}^n \mathbf{t}_{i,n}^{F_i} <+\infty
\right )=1.
\]
In particular if Equation~\eqref{eq:sumprod} holds
\[
 \pr \left ( \sum_{n=0}^\infty \prod_{i=0}^n \mathbf{t}_{i,n}^{F_i} <+\infty \Big | \mathcal{N}=\mathbf{f}
\right )=1
\]
for $\theta$-almost all $\mathbf{f}=\{f_i\}_{i \in \N} \in {(S_X)^{\mathbb{N}}}$.
\end{lem}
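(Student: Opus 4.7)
The plan is to pass the almost-sure finiteness of the random series through a direct Tonelli computation, using the fact that the multidimensional generating function is precisely the expected value of the monomial $\mathbf{z}^{F_i}$.

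The key observation is that, by the defining equality~\eqref{eq:genfun} and the fact that $F_i \sim \mu_i$, for any $\mathbf{z} \in [0,1]^X$,
\[
\E[\mathbf{z}^{F_i}] = \E\Big[\prod_{x \in X} \mathbf{z}(x)^{F_i(x)}\Big] = \sum_{f \in S_X} \mu_i(f) \prod_{x \in X} \mathbf{z}(x)^{f(x)} = G_\boldmu(\mathbf{z}|i).
\]
Because $\{F_i\}_{i \in \N}$ are independent (the law of $\mathcal{N}$ is the product $\theta = \prod_i \mu_i$), this yields for every fixed $n$,
\[
\E\Big[\prod_{i=0}^n \mathbf{t}_{i,n}^{F_i}\Big] = \prod_{i=0}^n G_\boldmu(\mathbf{t}_{i,n}|i).
\]
All terms are non-negative, so Tonelli's theorem applies and, together with the hypothesis~\eqref{eq:sumprod}, gives
\[
\E\Big[\sum_{n=0}^\infty \prod_{i=0}^n \mathbf{t}_{i,n}^{F_i}\Big] = \sum_{n=0}^\infty \prod_{i=0}^n G_\boldmu(\mathbf{t}_{i,n}|i) < +\infty.
\]
A non-negative random variable with finite expectation is a.s.~finite, which proves the first assertion.

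For the conditional statement, I would remark that the series $\sum_n \prod_i \mathbf{t}_{i,n}^{F_i}$ depends only on $\mathcal{N} = \{F_i\}_{i \in \N}$ (the radii $\{R_{n,x,i}\}$ do not enter), hence the event $A := \{\sum_n \prod_i \mathbf{t}_{i,n}^{F_i} < +\infty\}$ is $\sigma(\mathcal{N})$-measurable. Using Equation~\eqref{eq:disintegration}, $\pr(A | \mathcal{N} = \mathbf{f}) = \ident_A(\mathbf{f})$ for $\theta$-almost every $\mathbf{f} \in (S_X)^{\mathbb{N}}$. Since $\pr(A)=1$ by the first part, we conclude $\ident_A(\mathbf{f}) = 1$ for $\theta$-almost every $\mathbf{f}$, as required.

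There is no substantial obstacle here: the proof is essentially a one-line Fubini/Tonelli computation once one recognizes that $G_\boldmu(\mathbf{z}|i)$ is, by definition, $\E[\mathbf{z}^{F_i}]$. The only care needed is in the second half, where one must note that the event in question belongs to $\sigma(\mathcal{N})$ so that the conditional probabilities are indicator-valued $\theta$-a.s., allowing the transfer from unconditional to conditional a.s.~finiteness.
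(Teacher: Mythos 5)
Your proof is correct and follows essentially the same route as the paper's: the paper also computes $\E[\xi]$ by interchanging expectation and sum (via monotone convergence, which is the same device as your Tonelli step), uses independence of the $F_i$ to factor the expectation of the product into $\prod_i G_\boldmu(\mathbf{t}_{i,n}|i)$, and concludes a.s.~finiteness from finiteness of the expectation, with the conditional statement deduced from Equation~\eqref{eq:disintegration}. Your added observation that the event is $\sigma(\mathcal{N})$-measurable, so the conditional probabilities are indicator-valued $\theta$-a.s., is a slightly more explicit justification of the final step than the paper provides, but it is the same argument.
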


\begin{proof} 
 Let $\xi:= \sum_{n=0}^\infty \prod_{i=0}^n \mathbf{t}_{i,n}^{F_i}$. Note that, by the Monotone Convergence Theorem and
using the independence of $\{F_i\}_{i \in \N}$,
\[
\begin{split}
 \E \left [ \xi \right ] &=  \E_\theta \left [ \xi \right ] 
= \sum_{n=0}^\infty \E_\mu \left [ \prod_{i=0}^n \mathbf{t}_{i,n}^{F_i} \right ] 
= \sum_{n=0}^\infty \prod_{i=0}^n G_{F_i}(\mathbf{t}_{i,n}) < +\infty
\end{split}
\]
thus $\pr(\xi<+\infty)\equiv\theta(\xi <+\infty)=1$. The last equality in the statement follows easily from Equation~\eqref{eq:disintegration}.
\end{proof}

\begin{proof}[Proof of Theorem~\ref{thm:3}]
    We study the behavior of the single-station counterpart and we initially follow the proof of \cite[Proposition 3.1]{cf:JMZ}.
Note that $\pr(\widetilde R_{i}<n-i+1)=G_\boldmu(\mathbf{\mathfrak{r}}_i(n-i+1)|i)$ is the probability that the
$n+1$-vertex does not belong to the radius of influence of the $i$-th vertex.
Hence $\prod_{i=0}^n G_\boldmu(\mathbf{\mathfrak{r}}_i(n-i+1)|i)$ is the probability
that the $n+1$-vertex does not belong to the radius of influence of any vertex to its left.
Denote this event by $E_n$:
by Borel-Cantelli $\pr(\limsup_n E_n)=1$. Whence, there exists $n_0$ such that
for all $\pr \left (\bigcap_{k \ge n_0} E_k^\complement \right )>0$. Since $\pr(V_{n_0})>0$,
where $V_{n_0}$ is the event ``all the stations at $0,1,\ldots,n_0$ are activated'', we have
(using the FKG inequality)
\[
 \pr(V) \ge \pr \bigg (\bigcap_{k \ge n_0} E_k^\complement \Big | V_{n_0} \bigg ) \pr(V_{n_0})=
\pr \bigg ( \bigcap_{k \ge n_0} E_k^\complement \cap V_{n_0}
\bigg ) \ge \pr \bigg ( \bigcap_{k \ge n_0} E_k^\complement \bigg ) \pr(V_{n_0})>0.
\]
In particular if we have a deterministic environment, say $F_i:=f_i \in S_X$ for all $i \in \N$, where $f_0 \neq \mathbf{0}$ and
\begin{equation}\label{eq:sumprod4}
 \sum_{n=0}^\infty \prod_{i=0}^n \mathbf{\mathfrak{r}}_i(n-i+1)^{f_i}<+\infty
\end{equation}
then, since $G_{\boldmu}(\mathbf{z})=\mathbf{z}^{f_{i}}$, Equation~\eqref{eq:sumprod3} holds and $\pr(V)>0$.

As in the previous proofs, we condition on the event $\{F_0 \neq \mathbf{0}\}$.
Finally, by Lemma~\ref{lem:1} (using $\mathbf{t}_{i,n}:= \mathbf{\mathfrak{r}}_i(n-i+1)$) we see that Equation~\eqref{eq:sumprod4}
holds for $\widetilde \theta$-almost all configurations $\mathbf{f} \in \widehat{{(S_X)^{\mathbb{N}}}}$ and this yields the result.
\end{proof}

  \begin{proof}[Proof of Theorem~\ref{th:reverseinhomogeneous}]
   If we define
\[
 \xi_{n}:= \sum_{k \ge 1} (1-\mathbf{\mathfrak{r}}_{n+k}(k)^{F_{n+k}}), \qquad
\zeta:= \sum_{n \in \N} \prod_{k=1}^\infty \mathbf{\mathfrak{r}}_{n+k}(k)^{F_{n+k}}
\]
then, by the Monotone Convergence Theorem and the  Bounded Convergence Theorem,
\[
 \E_\mu[\xi_{n}]=\sum_{k \ge 1} (1-G_\boldmu(\mathbf{\mathfrak{r}}_{n+k}(k)|n+k)),
\qquad
\E_\mu[\zeta]=\sum_{n \in \N} \prod_{k=1}^\infty G_\boldmu(\mathbf{\mathfrak{r}}_{n+k}(k)|n+k).
\]
We use the same notation of the proof of Theorem~\ref{thm:2}. According to \cite[Theorem 2.4(i)]{cf:JMZ}, $\E_\mu[\xi_{n}]=+\infty$ if and only if $\pr(S)=1$
(almost sure survival of the single-station counterpart, if there is a station at the origin with probability one) which is
equivalent to $\pr(S|F_0 \neq \mathbf{0})=1-\mu_0(\mathbf{0})$ which, in turn, is equivalent to $\pr(S|\mathcal{N}=\mathbf{f})
=1$ for $\widehat \theta$-almost all configurations $\mathbf{f} \in \widehat{{(S_X)^{\mathbb{N}}}}$. Moreover,
$\E_\mu[\zeta]<+\infty$ implies $\zeta <+\infty$ for $\widehat \theta$-almost all configurations  $\mathbf{f} \in \widehat{{(S_X)^{\mathbb{N}}}}$ and then,
according to \cite[Theorem 2.4(ii)]{cf:JMZ}, $\pr(S|\mathcal{N}=\mathbf{f})>0$ for $\theta$-almost all configurations $\mathbf{f}$.
\end{proof}

The following lemma, basically known as Kummer's convergence test, is required in Remark~\ref{rem:kummer} and short proofs can be found in \cite{cf:Sjodin, cf:tong}.

\begin{lem}
   \label{lem:kummer} 
   Let $\{a_n\}_{n \in \N}$ be a sequence of positive numbers. The following are equivalent.
   \begin{enumerate}
   \item $\sum_{n \in \N}a_n <+\infty$;
       \item there exist $\alpha >0$, $N \in \N$, and a sequence $\{B_n\}_{n \ge N}$ of positive numbers such that $B_n a_n/a_{n+1}-B_{n+1} \ge \alpha$ for all $n \ge N$;
       \item there exist $\alpha >0$, $N \in \N$, and a sequence $\{B_n\}_{n \ge N}$ of positive numbers such that $B_n a_n/a_{n+1}-B_{n+1} = \alpha$ for all $n \ge N$;
       \item there exist $\alpha >0$, $N \in \N$ and $B_N>0$  such that $B_{n+1}:= B_n a_n/a_{n+1} - \alpha$ for all $n \ge N$ defines recursively a sequence of positive numbers;
       \item there exist $\alpha >0$, $N \in \N$ and $B_N>0$  such that $B_{n+1}:= \big (B_N a_N - \alpha \sum_{i=N+1}^{n+1} a_i \big )/a_{n+1}>0$ for all $n \ge N$.
   \end{enumerate}
Moreover $\sum_{n \in \N}a_n =+\infty$ if and only if there exists a sequence $\{B_n\}_{n \in \N}$ of positive numbers such that $\sum_{n \in \N} 1/B_n=+\infty$ and $B_n (a_n/a_{n+1}) \le B_{n+1}$.
\end{lem}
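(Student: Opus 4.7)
The plan is to establish the chain of equivalences (1)$\Leftrightarrow$(2)$\Leftrightarrow$(3)$\Leftrightarrow$(4)$\Leftrightarrow$(5) and then, separately, the divergence characterization. The easy equivalences come first: (3)$\Rightarrow$(2) is immediate (equality is a special case of $\ge\alpha$), while (4)$\Leftrightarrow$(5) follows by solving the one-step recursion: multiplying $B_{n+1}=B_na_n/a_{n+1}-\alpha$ by $a_{n+1}$ gives $B_{n+1}a_{n+1}=B_na_n-\alpha a_{n+1}$, which telescopes to $B_{n+1}a_{n+1}=B_Na_N-\alpha\sum_{i=N+1}^{n+1}a_i$, showing that (4) and (5) are literally the same condition written in two guises; moreover (3) and (4) differ only in whether one prescribes the global identity or the initial data $(\alpha,N,B_N)$, so their equivalence is cosmetic. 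The essential remaining step is linking (1) to the rest via (2)$\Rightarrow$(1) and (1)$\Rightarrow$(3).

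For (2)$\Rightarrow$(1), I would multiply $B_na_n/a_{n+1}-B_{n+1}\ge\alpha$ by $a_{n+1}>0$ and sum from $N$ to $n$: the left-hand side telescopes to $B_Na_N-B_{n+1}a_{n+1}\ge\alpha\sum_{i=N+1}^{n+1}a_i$, and dropping the non-negative term $B_{n+1}a_{n+1}$ yields $\sum_{i=N+1}^{n+1}a_i\le B_Na_N/\alpha$ uniformly in $n$, proving (1). For (1)$\Rightarrow$(3), given $\sum a_n<+\infty$, I fix $\alpha>0$ and $N\in\mathbb{N}$, pick a constant $C>\alpha\sum_{i\ge N+1}a_i$, and set $B_n:=\bigl(C-\alpha\sum_{i=N+1}^na_i\bigr)/a_n$ for $n\ge N$ (the empty sum being $0$, so $B_N=C/a_N$). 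By construction $B_na_n-B_{n+1}a_{n+1}=\alpha a_{n+1}$, i.e.\ $B_na_n/a_{n+1}-B_{n+1}=\alpha$, and strict positivity of $B_n$ follows from $C-\alpha\sum_{i=N+1}^n a_i\ge C-\alpha\sum_{i\ge N+1}a_i>0$.

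The divergence part splits cleanly in two. ($\Leftarrow$): the hypothesis $B_na_n/a_{n+1}\le B_{n+1}$ rewrites as $B_na_n\le B_{n+1}a_{n+1}$, so $\{B_na_n\}$ is non-decreasing and hence bounded below by some $c>0$; then $a_n\ge c/B_n$ and $\sum a_n\ge c\sum 1/B_n=+\infty$. ($\Rightarrow$): set $S_n:=\sum_{i=0}^na_i$ and $B_n:=S_n/a_n$; monotonicity of $\{S_n\}$ gives $B_na_n/a_{n+1}=S_n/a_{n+1}\le S_{n+1}/a_{n+1}=B_{n+1}$, and $\sum 1/B_n=\sum a_n/S_n=+\infty$ follows from the standard Abel--Dini-type block argument: since $S_n\to+\infty$, for every $N$ there exists $M>N$ with $S_M\ge 2S_N$, whence $\sum_{n=N+1}^M a_n/S_n\ge (S_M-S_N)/S_M\ge 1/2$, and concatenating disjoint blocks of this form yields divergence.

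The main obstacle I anticipate is the construction in (1)$\Rightarrow$(3): one must choose the dominating constant $C>\alpha\sum_{i\ge N+1}a_i$ large enough to keep the recursively defined $B_n$ strictly positive at every step while still satisfying the exact identity of (3). This choice is precisely what the convergence of $\sum a_n$ makes possible; once $C$ is in hand, every other implication in the lemma reduces to telescoping, monotonicity, or the Abel--Dini block argument invoked in the divergence half.
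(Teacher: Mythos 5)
Your proof is correct and complete: the cycle $(1)\Rightarrow(3)\Rightarrow(2)\Rightarrow(1)$ together with the identifications $(3)\Leftrightarrow(4)\Leftrightarrow(5)$ covers all five items, and both directions of the divergence characterization (the monotonicity of $B_na_n$ for sufficiency, the Abel--Dini block argument for necessity) are sound. Note that the paper itself does not prove this lemma but defers to the cited references \cite{cf:Sjodin, cf:tong}; your telescoping argument is essentially the standard short proof found there, so nothing needs to be changed.
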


\begin{proof}[Proof of Theorem~\ref{pro:comparehomfirework}]
 Recall that under Assumption~\ref{ass:1} or Assumption~\ref{ass:2}, there exists $T>0$ such that for all $t \ge T$ and $y \in Y$, we have Equation~\eqref{eq:assgerm}, that is,
$    G_\boldmu(\mathbf{\mathfrak{r}}_y(t)|y) 
\le G_\boldnu(\mathbf{\overline{\mathfrak{r}}}_y(t)|y)$.

Observe that, for every $n_0 \in \N$,
\[
 \sum_{n=0}^\infty \prod_{i=0}^n G_{\boldmu}(\bm{\mathfrak{r}}(i+1)|0)=+\infty
 \Leftrightarrow
  \sum_{n=n_0}^\infty \prod_{i=0}^n G_{\boldmu}(\bm{\mathfrak{r}}(i+1)|0)=+\infty
   \Leftrightarrow
  \sum_{n=n_0}^\infty \prod_{i=n_0}^n G_{\boldmu}(\bm{\mathfrak{r}}(i+1)|0)=+\infty
\]
since $G_{\boldmu}(\bm{\mathfrak{r}}(i+1)|0)>0$ for all $i \in \N$. 

\begin{enumerate}
\item 
Take $n_0 \ge T$; applying Theorem~\ref{thm:2} and Equations~\eqref{eq:betterthm2} and \eqref{eq:assgerm}, we have
\[
\begin{split}
    \theta_\boldnu(\mathbf{f}\colon \pr_\boldnu(V|&\mathcal{N}_\boldnu=\mathbf{f})>0)=1-\nu(\mathbf{0}) 
    \Longrightarrow 
     \sum_{n=0}^\infty \prod_{i=0}^n G_{\boldnu}(\bm{\mathfrak{r}}(i+1)|0)<+\infty\\
 & 
\Longrightarrow
    \sum_{n=n_0}^\infty \prod_{i=n_0}^n G_{\boldnu}(\bm{\mathfrak{r}}(i+1)|0)<+\infty
\Longrightarrow
    \sum_{n=n_0}^\infty \prod_{i=n_0}^n G_{\boldmu}(\bm{\mathfrak{r}}(i+1)|0)<+\infty\\
   &\Longrightarrow
  \sum_{n=0}^\infty \prod_{i=0}^n G_{\boldmu}(\bm{\mathfrak{r}}(i+1)|0)<+\infty 
    \Longrightarrow
    \theta_\boldmu(\mathbf{f}\colon \pr_\boldmu(V|\mathcal{N}_\boldmu=\mathbf{f})>0)=1-\mu(\mathbf{0})
\end{split}
\]

\item Similarly, take $n_0 \ge T$; applying Theorem~\ref{thm:2} and Equations~\eqref{eq:betterthm2} and \eqref{eq:assgerm}, we have
\[
\begin{split}
    \theta_\boldmu(\mathbf{f}\colon \pr_\boldmu(V|&\mathcal{N}_\boldmu=\mathbf{f})=0)=1 
    \Longrightarrow 
     \sum_{n=0}^\infty \prod_{i=0}^n G_{\boldmu}(\bm{\mathfrak{r}}(i+1)|0)=+\infty\\
 & 
\Longrightarrow
    \sum_{n=n_0}^\infty \prod_{i=n_0}^n G_{\boldmu}(\bm{\mathfrak{r}}(i+1)|0)=+\infty
\Longrightarrow
    \sum_{n=n_0}^\infty \prod_{i=n_0}^n G_{\boldnu}(\bm{\overline{\mathfrak{r}}}(i+1)|0)=+\infty\\
   &\Longrightarrow
  \sum_{n=0}^\infty \prod_{i=0}^n G_{\boldnu}(\bm{\overline{\mathfrak{r}}}(i+1)|0)=+\infty 
    \Longrightarrow
    \theta_\boldnu(\mathbf{f}\colon \pr_\boldnu(V|\mathcal{N}_\boldnu=\mathbf{f})=0)=1
\end{split}
\]

\end{enumerate}

  \end{proof}

  \begin{proof}[Proof of Theorem~\ref{pro:comparereverse}]
  As in the previous proof, we note that, for every $n_0 \in \N$
  \[
  \sum_{i \in \N} (1-G_\boldmu(\mathfrak{r}_{n+i}(i)|n+i))=+\infty \Leftrightarrow 
  \sum_{i =n_0} (1-G_\boldmu(\mathfrak{r}_{n+i}(i)|n+i))=+\infty.
  \]
  \begin{enumerate}
      \item Take $n_0 \ge T$; applying Theorem~\ref{th:reverseinhomogeneous} and Equations~\eqref{eq:betterthm3} and \eqref{eq:assgerm}, we have
      \[
      \begin{split}
\theta_\boldnu(\mathbf{f}\colon& \pr_\boldnu(S|\mathcal{N}_\boldnu=\mathbf{f})=1)=1-\nu_0(\mathbf{0}) \Longrightarrow \sum_{i \in \N} (1-G_\boldnu(\overline{\mathfrak{r}}_{n+i}(i)|n+i))=+\infty, \, \forall n \in \N \\
& \Longrightarrow 
\sum_{i =n_0} (1-G_\boldnu(\overline{\mathfrak{r}}_{n+i}(i)|n+i))=+\infty, \, \forall n \in \N  \\
&
\Longrightarrow 
\sum_{i =n_0} (1-G_\boldmu(\mathfrak{r}_{n+i}(i)|n+i))=+\infty, \, \forall n \in \N  \\
& \Longrightarrow \sum_{i \in \N} (1-G_\boldmu(\mathfrak{r}_{n+i}(i)|n+i))=+\infty, \, \forall n \in \N  \Longrightarrow
 \theta_\boldmu(\mathbf{f}\colon \pr_\boldmu(S|\mathcal{N}_\boldmu=\mathbf{f})=1)=1-\mu_0(\mathbf{0}).
\end{split}
      \]
      \item Similarly, take $n_0 \ge T$; applying Theorem~\ref{th:reversehomogeneous} and Equations~\eqref{eq:betterthm3} and \eqref{eq:assgerm}, we have
      \[
      \begin{split}
\theta_\boldmu(\mathbf{f}\colon \pr_\boldmu(S|&\mathcal{N}_\boldmu=\mathbf{f})=0)=1 \Longrightarrow \sum_{i \in \N} (1-G_\boldmu({\mathfrak{r}}(i)|1))<+\infty \\
& \Longrightarrow 
\sum_{i =n_0} (1-G_\boldmu({\mathfrak{r}}(i)|1))<+\infty 
\Longrightarrow 
\sum_{i =n_0} (1-G_\boldnu(\overline{\mathfrak{r}}(i)|1))<+\infty \\
& \Longrightarrow \sum_{i \in \N} (1-G_\boldnu(\overline{\mathfrak{r}}(i)|1))<+\infty \Longrightarrow
 \theta_\boldnu(\mathbf{f}\colon \pr_\boldnu(S|\mathcal{N}_\boldnu=\mathbf{f})=1)=1.
 \end{split}
    \]  
  \end{enumerate}
      
  \end{proof}

\section*{Acknowledgement}

The authors would like to thank the anonymous Referees for their careful reading of the paper and their valuable suggestions.

\bibliography{bibliography}{}
\bibliographystyle{plain}

\end{document}